\theoremstyle{plain}
    \newtheorem{theorem}{Theorem}[section]
    \newtheorem{lem}[theorem]{Lemma}
    \newtheorem{prop}[theorem]{Proposition}
    \newtheorem{cor}[theorem]{Corollary}
    \newtheorem{rem}[theorem]{Remark}
\theoremstyle{definition}
    \newtheorem{defn}[theorem]{Definition}
\renewcommand{\thepage}{\roman{page}}
\title{Characterizations of Type-$III$ Bernoulli Scheme}
\author{Tianyi Zhou}
\date{}
\begin{document}

\clearpage
\thispagestyle{empty}

\begin{abstract}
In this paper, we will prove alternate conditions for a type-$III$ Bernoulli scheme to be of type-$III_0$, type-$III_{\lambda}$ and type-$III_1$, and then conclude an alternate definition of the asymptotic ratio set of a type-$III$ ITPFI factor.
\end{abstract}

\maketitle

\vspace{0.5cm}
\begin{center}
\large\bfseries{Introduction}
\end{center}
\vspace{0.5cm}

Bernoulli scheme is one of popular examples of a non-singular and ergodic group action defined on a standard probability space. As a result, it can be classified by its ratio set (see \cite{18}). The relation between a Bernoulli scheme and its associated ITPFI factor was studied in \cite{2}, \cite{26} and then \cite{73}, and \cite{73} points out that when the Bernoulli scheme is of type-$III$, its ratio set coincides with the asymptotic ratio set of the ITPFI factor associated with the given Bernoulli scheme. In \cite{2}, a precise description of a type-$I$, type-$II_1$ Bernoulli scheme is proven, along with a necessary condition for a Bernoulli scheme to be of type-$III$. Results in \cite{2} were later used in \cite{26} to describe type-$I$, type-$II_1$ ITPFI factors, and to provide a necessary condition for an ITPFI factor to be of type-$III$. In \cite{80}, a precise description of a type-$III$ ITPFI factor is also proven. Also \cite{4} and \cite{7} provide necessary conditions respectively for a two-point Bernoulli scheme to be of type-$III_1$ where the method in \cite{4} is statistical while the one in \cite{7} inspires techniques that would be used in proving our main theorems. Both methods will need observations on clustered points of $\left\{ \mu_n(1) \slash \mu_n(0) \right\}$, the set of ratio of weights of each $\mu_n$. In the context of a two-point Bernoulli scheme, certain summability conditions will determine the type of the measure, as shown in \cite{7} and \cite{15}.\\

The goal of this paper is to prove alternate conditions that describe the set of clustered points of ratio of weights of each $\mu_n$ (as in \cite{7}) to describe each type of a type-$III$ Bernoulli scheme. Then we will conclude an alternate definition of the asymptotic ratio set of a type-$III$ ITPFI factor. Throughout this paper in a measure space, all equations and definitions are assumed $\bmod\,0$. In the preliminary section, definitions we need and some of the historical results will be covered. In \textbf{Section 2.1}, we will first provide a description of type-$III_0$, type-$III_{\lambda}$ and type-$III_1$ Bernoulli scheme based on the local behavior of their Radon-Nikodym cocycles. In the first part of \textbf{Section 2.2}, main theorems are proven for type-$III$ Bernoulli scheme of unbounded type (\cite{40}). The main theorems for bounded type Bernoulli scheme (see \cite{41}) are proven in \textbf{Section 2.2}.

\setcounter{page}{1}
\renewcommand{\thepage}{\arabic{page}}
\fancyhf[rh]{\arabic{page}}

\section{Preliminaries}

\subsection{ITPFI factors}

\begin{defn}[{\cite[Definition 2.3]{26}}]\label{Definition 1.1}

Given a countable family of Hilbert spaces $\big( H_n\big)_{n\in \mathbb{N}}$ where the dimension of each $H_n$ is at most countably infinite. Suppose $\big( v_n \big)_{n\in \mathbb{N}}$ is a family of vectors such that $v_n\in H_n$ for each $n\in\mathbb{N}$ and $\prod_{n\in \mathbb{N}} \|v_n\|$ converges to a positive number. Then the \textbf{incomplete tensor product space} (\textbf{ITPS}) of $\big( H_n \big)_{n\in \mathbb{N}}$ is denoted by:

$$
H = \bigotimes_{n\in \mathbb{N}} \big(H_n, v_n\big)
$$
For any $(u_n)_{n\in \mathbb{N}}$ with $u_n\in H_n$, if $\prod_{n\in \mathbb{N}} \|u_n\|$ converges to a positive number, by definition we have:

$$
u = \bigotimes_{n\in \mathbb{N}} u_n\in \bigotimes_{n\in \mathbb{N}} H_n
$$
Without confusion we simply say $u\in H$. Similarly, we have $v = \bigotimes_{n\in \mathbb{N}} v_n\in H$. For any subset $J\subseteq \mathbb{N}$, we define:

$$
H(J) = \bigotimes_{n\in J}H_n, \hspace{1cm} v(J) = \bigotimes_{n\in J} v_n
$$

\end{defn}

\begin{defn}[{\cite[Definition 2.4, 2.5]{26}}]\label{Definition 1.2}

In the set-up of \textbf{Definition 1.1}, let $H = \bigotimes_{n\in \mathbb{N}} \big( H_n, v_n \big)$ be an IPTS of $\big( H_n \big)_{n\in \mathbb{N}}$. For any $k\in\mathbb{N}$ and $S\in B(H_k)$, define:

$$
\pi_k: B(H_k)\rightarrow B(H), \hspace{0.3cm} S\mapsto \left( \bigotimes_{\substack{n\in \mathbb{N} \\ n\neq k}} \operatorname{id}_k \right) \otimes S
$$
Given a sequence of von Neumann algebra $\big( \mathcal{M}_n \big)_{n\in \mathbb{N}}$ where $\mathcal{M}_n \subseteq B(H_n)$ for each $n\in\mathbb{N}$, define:

$$
\bigotimes_{n\in \mathbb{N}} \mathcal{M}_n = \big\{ \pi_n(\mathcal{M}_n): n\in \mathbb{N} \big\}^{''}
$$
For any subset $J\subseteq \mathbb{N}$, we define:

$$
\mathcal{M}(J) = \bigotimes_{n\in J} \mathcal{M}_n
$$
When all $\mathcal{M}_n$ are factors, we use $\mathcal{M} = R\big( H_n, \mathcal{M}_n, v_n \big)$ to denote the factor $\bigotimes_{n\in \mathbb{N}} \mathcal{M}_n$ endowed with $v$. When $H$ is fixed, we will simply use $\mathcal{M} = R\big( \mathcal{M}_n, v_n \big)$.
    
\end{defn}

\begin{defn}[{\cite[Definition 2.6]{26}}]

Given an IPTS $H = \bigotimes_{n\in \mathbb{N}} \big( H_n, v_n \big)$, assume that the dimension of all $H_n$ are at least $2$. Then an \textbf{ITPFI factor} is an factor that is isomorphic to $R\big( \mathcal{M}_n, v_n \big)$ where each $\mathcal{M}_n$ is a type-$I_{m_n}$ factor where $m_n\in\mathbb{N}$ and $m_n\geq 2$. In particular, if for some $k\in \mathbb{N}$, $m_n=k$ for all $n\in\mathbb{N}$, we call $R\big(\mathcal{M}_n, v_n \big)$ an $\textbf{\textup{ITPFI}}_k$ \textbf{factor}.
    
\end{defn}

\begin{defn}[\cite{26}]\label{Definition 1.4}

Given an ITPFI factor $\mathcal{M} = R\big( \mathcal{M}_n, v_n \big)$, for each $n\in\mathbb{N}$, observe that the following linear functional is a normal state defined on $\mathcal{M}_n$:

$$
\rho_n: \mathcal{M}_n \rightarrow \mathbb{C}, \hspace{0.3cm} S\mapsto \langle\, Sv_n,v_n\,\rangle
$$
and for an arbitrary finite subset $J\subseteq \mathbb{N}$, define:

$$
\rho_J = \mathcal{M}(J) \rightarrow \mathbb{C}, \hspace{0.3cm} \bigotimes_{n\in J} S_n \mapsto \prod_{n\in J} \langle\, Sv_n, v_n\,\rangle
$$
Hence for each $n\in\mathbb{N}$, there exists a positive trace-class operator $t_n\in B(H_n)$ such that $\rho_n(S) = \operatorname{Tr}\big( t_nS \big)$. For each $n\in\mathbb{N}$, we let $\operatorname{Sp}\big( v(n) \slash \mathcal{M}(n) \big)$ to denote the eigenvalues of $\rho_n$. Suppose:

$$
\operatorname{Sp}\big( v(n)\slash \mathcal{M}(n) \big) = \big\{ \lambda(n, i_n) \big\}_{1\leq i_n \leq k_n}
$$
and $\lambda_{n, 1} \geq \lambda_{n, 2} \geq \cdots$. Similarly, for any finite subset $J\subseteq \mathbb{N}$, we use $\operatorname{Sp}\big( v(J) \slash \mathcal{M}(J) \big)$ to denote the spectrum of the positive trace-class operator (in $B\big( H(J) \big)$) that corresponds to $\rho_J$. If $\lambda \in \operatorname{Sp}\big( v(J) \slash \mathcal{M}(J) \big)$, then clearly:

$$
\lambda = \prod_{n\in J}\lambda_{n, i_n}
$$
where, for each $n\in J$, $1\leq i_n\leq k_n$ and $\lambda_{n, i_n} \in \operatorname{Sp}\big( v(n)\slash \mathcal{M}(n) \big)$.
    
\end{defn}

\begin{defn}[{\cite[Definition 3.2]{26}}]\label{Definition 1.6}

Given an ITPFI factor $\mathcal{M} = R\big( \mathcal{M}_n, v_n \big)$, a finite subset $I\subseteq \mathbb{N}$ and $K\subseteq \operatorname{Sp}\big( v(I) \slash \mathcal{M}(I) \big)$, define $\lambda(K)$ to be the sum of elements in $K$. The \textbf{asymptotic ratio set} of $\mathcal{M}$, denoted by $r_{\infty}(\mathcal{M}, v)$ is the set of all non-negative real numbers $x$ such that there exists $\big( I_n \big)_{n\in \mathbb{N}}$ an infinite sequence of mutually disjoint finite subsets of $\mathbb{N}$ and a family of mutually disjoint subsets $\big( K_n^1, K_n^2 \big)_{n\in \mathbb{N}}$ such that for each $n\in\mathbb{N}$, $K_n^1$, $K_n^2\subseteq I_n$ are two disjoint subsets of non-zero eigenvalues and there exists a bijection $\phi_n$ from $K_n^1$ to $K_n^2$ such that the sequence $\big\{ \lambda\big( K_n^1 \big) \big\}_{n\in \mathbb{N}}$ is not summable and:

$$
\lim_n \max\left\{ \left\vert\, x-\frac{\phi_n(\lambda)}{\lambda} \,\right\vert: \lambda\in K_n^1 \right\} = 0
$$
    
\end{defn}

\begin{rem}

According to {\cite[Corollary 2.9]{26}}, in the set-up of \textbf{Definition 1.3}, $R\big( \mathcal{M}_n, v_n \big)$ is still an ITPFI factor if countably many $\mathcal{M}_n$ are infinite type-$I$ factors. Therefore, we shall also consider the case where infinitely many $\mathcal{M}_n$ are infinite-dimensional.
    
\end{rem}

\begin{lem}[{\cite[Lemma 3.14]{26}}]\label{Lemma 1.7}

Given $\mathcal{M} = R\big( \mathcal{M}_n, v_n \big)$ an ITPFI factor, for each $n\in\mathbb{N}$ define $u_n = \dfrac{v_n}{\|v_n\|}$ and define $\mathcal{M}^u = R\big( \mathcal{M}_n, u_n \big)$. We then have $\mathcal{M}\cong \mathcal{M}^u$ and $r_{\infty} \big( \mathcal{M}, v \big) = r_{\infty}\big( \mathcal{M}^u, u \big)$.
    
\end{lem}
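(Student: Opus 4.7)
The plan is to observe that normalizing each $v_n$ to unit length is essentially cosmetic: the two ITPS $\bigotimes(H_n, v_n)$ and $\bigotimes(H_n, u_n)$ coincide as Hilbert spaces, and they carry the same von Neumann algebra $\bigotimes \mathcal{M}_n$. Concretely, inside this common space we have $v = \big(\prod_n \|v_n\|\big) u$. First I would verify that $H$ and $H^u$ coincide as Hilbert spaces (the required quasi-convergence $\prod_n \langle v_n/\|v_n\|, u_n/\|u_n\|\rangle = \prod_n 1$ is immediate), and that the $\pi_n$ of \textbf{Definition 1.2} depend only on $H$ and $\mathcal{M}_n$, so $\mathcal{M}$ and $\mathcal{M}^u$ are literally equal as subalgebras of $B(H)$; this already yields $\mathcal{M} \cong \mathcal{M}^u$.

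For the equality $r_{\infty}(\mathcal{M}, v) = r_{\infty}(\mathcal{M}^u, u)$ I would track how replacing $v$ by $u$ rescales the relevant spectra. A short computation gives $\rho_n^v(S) = \|v_n\|^2 \rho_n^u(S)$, hence $\operatorname{Sp}(v(n)/\mathcal{M}(n)) = \|v_n\|^2 \operatorname{Sp}(u(n)/\mathcal{M}(n))$ and, by \textbf{Definition 1.4}, $\operatorname{Sp}(v(J)/\mathcal{M}(J)) = \big(\prod_{n\in J}\|v_n\|^2\big) \operatorname{Sp}(u(J)/\mathcal{M}(J))$ for every finite $J \subseteq \mathbb{N}$. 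Given a candidate $(I_n, K_n^1, K_n^2, \phi_n)_n$ witnessing $x \in r_{\infty}(\mathcal{M}, v)$, I would reuse the same $I_n$, the correspondingly rescaled subsets, and the same bijection to witness $x \in r_{\infty}(\mathcal{M}^u, u)$: the ratios $\phi_n(\lambda)/\lambda$ of \textbf{Definition 1.6} are invariant under multiplying $K_n^1$ and $K_n^2$ by the common positive scalar $\prod_{m\in I_n}\|v_m\|^2$, so the approximation condition transfers verbatim.

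The step I expect to require the most care is the non-summability condition on $\{\lambda(K_n^1)\}_n$, since the scaling factor depends on $n$. Here I would exploit that $\prod_n \|v_n\|$ converges to a positive number (so $\sum_n \big|\|v_n\| - 1\big| < \infty$), which forces the finite subproducts $\prod_{m \in I}\|v_m\|^2$ to lie in a fixed interval $[c,C] \subseteq (0,\infty)$ uniformly in $I \subseteq \mathbb{N}$. The sandwich $c\cdot \lambda(K_n^1)^u \leq \lambda(K_n^1)^v \leq C\cdot \lambda(K_n^1)^u$ then shows non-summability is preserved in both directions, completing the bijective identification of witnesses and giving $r_{\infty}(\mathcal{M}, v) = r_{\infty}(\mathcal{M}^u, u)$.
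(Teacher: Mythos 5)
Note first that the paper itself gives no proof of this lemma: it is quoted directly from the cited source (its Lemma 3.14), so there is no argument in the paper to compare yours against. Judged on its own, your proposal follows the natural route and its overall structure is sound: once the two incomplete tensor product spaces coincide, the $\pi_n$ of \textbf{Definition 1.2} and hence the algebras $\{\pi_n(\mathcal{M}_n)\}''$ are literally the same, and the asymptotic ratio set is unaffected because passing from $v$ to $u$ rescales each $\operatorname{Sp}\big(v(J)\slash\mathcal{M}(J)\big)$ by the positive scalar $\prod_{n\in J}\|v_n\|^{2}$, which leaves the ratios $\phi_n(\lambda)\slash\lambda$ untouched, and non-summability of $\lambda(K_n^1)$ survives once the finite subproducts are uniformly pinched in some $[c,C]\subseteq(0,\infty)$.

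Two of your justifications need repair, and they have a common source. First, the criterion you invoke for $H=H^u$ is vacuous: $\prod_n\langle v_n\slash\|v_n\|\,,\,u_n\slash\|u_n\|\rangle=1$ holds automatically whenever $u_n$ is a positive multiple of $v_n$ and decides nothing. Equality of the two incomplete tensor products is governed by von Neumann's equivalence of the reference sequences, i.e.\ $\sum_n\vert\langle v_n,u_n\rangle-1\vert=\sum_n\vert\,\|v_n\|-1\,\vert<\infty$; inequivalent sequences give orthogonal, not equal, incomplete products inside the complete direct product. Second, your parenthetical inference ``$\prod_n\|v_n\|$ converges to a positive number, so $\sum_n\vert\,\|v_n\|-1\,\vert<\infty$'' is false if ``converges'' means convergence of the partial products: for $\|v_n\|=1+(-1)^n\slash(n+1)$ the product converges to a positive limit while the sum diverges, and for such a sequence the subproducts over \emph{arbitrary} finite sets $I_n$ are not uniformly bounded either, so your sandwich argument for non-summability would also collapse. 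What rescues both steps is the convention, standard in the von Neumann--Araki--Woods framework and certainly the one intended in \textbf{Definition 1.1}, that infinite products converge in the unconditional (von Neumann) sense, which for positive factors means exactly $\sum_n\vert\,\|v_n\|-1\,\vert<\infty$. Make that reading explicit, replace your ``quasi-convergence'' criterion by the equivalence condition above, and your argument is complete.
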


\begin{prop}[{\cite[Lemma 3.6, 3.7]{26}}]\label{Proposition 1.8}

In the set-up of \textbf{Definition 1.5}, $r_{\infty}(\mathcal{M}, v)$ is a closed subset in $[0, \infty)$ and $r_{\infty}(\mathcal{M}, v) \backslash \{0\}$ is a multiplicative subgroup of $\mathbb{R}^*_+$.
    
\end{prop}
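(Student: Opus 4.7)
The plan is to verify closedness and the subgroup property in sequence. Throughout I normalize $v_n$ to unit vectors via Lemma \ref{Lemma 1.7}, so that $\operatorname{Sp}(v(n)/\mathcal{M}(n))$ becomes a probability distribution and $\lambda(K) \in [0, 1]$ for any $K \subseteq \operatorname{Sp}(v(J)/\mathcal{M}(J))$ with $J$ finite. For closedness, given $x_j \to x$ with $x_j \in r_\infty(\mathcal{M}, v)$ and witnessing data $\bigl(I_n^{(j)}, K_n^{1,j}, K_n^{2,j}, \phi_n^j\bigr)$ for each $j$, I diagonalize: inductively in $j$ pick a finite tail block $B_j \subset \mathbb{N}$ with $\bigcup_{n \in B_j} I_n^{(j)}$ disjoint from the sets already used for $j' < j$ (possible because the $(I_n^{(j)})_n$ are pairwise disjoint for fixed $j$), with $\max\{|x - \phi_n^j(\lambda)/\lambda| : \lambda \in K_n^{1,j},\, n \in B_j\} < 1/j$ (via $|x - \phi/\lambda| \leq |x - x_j| + |x_j - \phi/\lambda|$ combined with the limit condition of the $j$-th data), and with $\sum_{n \in B_j} \lambda(K_n^{1,j}) \geq 1$ (since each tail sum still diverges). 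Concatenating the blocks produces a sequence witnessing $x \in r_\infty(\mathcal{M}, v)$, which handles $x = 0$ as well.

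For inversion, given $x > 0$ with data $(I_n, K_n^1, K_n^2, \phi_n)$, the swap $(I_n, K_n^2, K_n^1, \phi_n^{-1})$ witnesses $x^{-1}$: the ratio $\phi_n^{-1}(\mu)/\mu = \lambda/\phi_n(\lambda)$ tends uniformly to $1/x$, and $\lambda(K_n^2) \geq (x/2)\lambda(K_n^1)$ for $n$ large keeps the sum divergent. For multiplication, let $x, y \in r_\infty \setminus \{0\}$ with data $(I_n, K_n^1, K_n^2, \phi_n)$ and $(I_n', K_n'^1, K_n'^2, \psi_n)$. By Definition \ref{Definition 1.4}, eigenvalues of $v(J \sqcup J')/\mathcal{M}(J \sqcup J')$ are products of eigenvalues on the two sides. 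I inductively select disjoint finite blocks $A_k$ from the $x$-sequence and $B_k$ from the $y$-sequence so that the combined sets $\tilde{I}_k := \bigcup_{m \in A_k} I_m \cup \bigcup_{n \in B_k} I_n'$ are pairwise disjoint, each block sum $\sum_{m \in A_k}\lambda(K_m^1)$ and $\sum_{n \in B_k}\lambda(K_n'^1)$ lies in $[1/2, 1]$, and the approximation errors of $\phi_m, \psi_n$ on $A_k, B_k$ tend to zero as $k \to \infty$. I let $\tilde{K}_k^1$ consist of those eigenvalues $\prod_{m \in A_k}\lambda_m \cdot \prod_{n \in B_k}\mu_n$ having \emph{exactly one} index $m_0 \in A_k$ with $\lambda_{m_0} \in K_{m_0}^1$ and $\lambda_m \notin K_m^1 \cup K_m^2$ for $m \neq m_0$, together with the analogous condition on $B_k$; define $\tilde{K}_k^2$ by replacing $K^1$ with $K^2$ in both factors; and define $\tilde{\phi}_k$ by flipping only the $m_0$- and $n_0$-coordinates via $\phi_{m_0}, \psi_{n_0}$. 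The sets $\tilde{K}_k^1, \tilde{K}_k^2$ are disjoint since $K^1 \cap K^2 = \emptyset$ at each coordinate, $\tilde{\phi}_k$ is a bijection, and each ratio $(\phi_{m_0}(\lambda_{m_0})/\lambda_{m_0})(\psi_{n_0}(\mu_{n_0})/\mu_{n_0})$ converges uniformly to $xy$.

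The main obstacle is to show $\sum_k \lambda(\tilde{K}_k^1) = \infty$. A naive pairing $(m_k, n_k)$ would yield $\lambda(\tilde{K}_k^1) = \lambda(K_{m_k}^1)\lambda(K_{n_k}'^1)$, which can be summable even when each factor series diverges, as the example $\lambda(K_m^1) = 1/m = \lambda(K_n'^1)$ shows. The block construction circumvents this: under the unit vector normalization and the elementary inequality $\log(1 - z) \geq -2z$ valid on $[0, 1/2]$, one gets $\prod_{m \in A_k,\, m \neq m_0}(1 - \lambda(K_m^1) - \lambda(K_m^2)) \geq \exp\bigl(-2\sum_{m \in A_k}(\lambda(K_m^1) + \lambda(K_m^2))\bigr)$, which together with $\lambda(K_m^2) \leq (x + o(1))\lambda(K_m^1)$ and the block sum bound produces a uniform positive lower bound on $\lambda(\tilde{K}_k^1)$. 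A brief case reduction (using singleton blocks) handles indices where $\lambda(K_m^1)$ is already bounded below, when the ``small individual mass'' hypothesis cannot be arranged. This gives $\sum_k \lambda(\tilde{K}_k^1) = \infty$, so $xy \in r_\infty(\mathcal{M}, v)$, completing the subgroup property.
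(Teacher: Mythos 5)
The paper offers no proof of this proposition: it is imported verbatim from Araki--Woods \cite{26} (their Lemmas 3.6 and 3.7), so the only meaningful comparison is with that source, and your argument is essentially a correct reconstruction of the standard direct proof. The two genuinely delicate points are exactly the ones you isolate. For closedness, the diagonal extraction of tail blocks works; the stated bound $<1/j$ is slightly off as written (you only control $|x-x_j|+|x_j-\phi_n^j(\lambda)/\lambda|$, and $|x-x_j|$ need not be $<1/j$), but either pass to a subsequence with $|x-x_{j}|<1/(2j)$ or just note the concatenated errors are $\le |x-x_j|+1/j\to 0$, which is all the definition requires. For the product, you correctly identify that the naive pairing fails because termwise products of two divergent series may be summable, and your regrouping into blocks of $\lambda$-mass in $[1/2,1]$, combined with the ``exactly one distinguished coordinate'' sets, does give the uniform lower bound $\lambda(\tilde{K}_k^1)\ge \frac14 e^{-2(1+x+o(1))}e^{-2(1+y+o(1))}$; and the large-mass indices do reduce to singleton blocks, since $\lambda(K_m^1)+\lambda(K_m^2)>1/2$ together with $\lambda(K_m^2)\le (x+o(1))\lambda(K_m^1)$ forces $\lambda(K_m^1)$ bounded below. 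The inversion step is correct as stated.

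One step you must make explicit: the witnessing families for $x$ and for $y$ are unrelated, so an $I_m$ may intersect an $I'_n$; for $\tilde{K}_k^1,\tilde{K}_k^2$ to sit inside $\operatorname{Sp}\big(v(\tilde{I}_k)\slash\mathcal{M}(\tilde{I}_k)\big)$ and factor as you claim, you need $\bigcup_{m\in A_k}I_m$ disjoint from $\bigcup_{n\in B_k}I'_n$ within each $k$ (your invocation of $J\sqcup J'$ presupposes this), not merely disjointness of the $\tilde{I}_k$ across $k$. This is repaired by the same finiteness argument you already use between blocks: after choosing $A_k$, only finitely many $n$ have $I'_n$ meeting $\bigcup_{m\in A_k}I_m$ or previously used indices, and discarding finitely many indices affects neither the divergence nor the error condition; choose $B_k$ from the rest. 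Finally, restrict the non-distinguished coordinates to non-zero eigenvalues (they carry the same mass, since zero eigenvalues contribute nothing to $1-\lambda(K_m^1)-\lambda(K_m^2)$ in the sum), so that $\tilde{K}_k^1,\tilde{K}_k^2$ consist of non-zero eigenvalues and the ratios are defined. With these small repairs your proof is complete; an alternative, softer route would be to invoke the algebraic characterization quoted later as Theorem \ref{Theorem 2.14} (where the naive pairing does work for $R_x\otimes R_y$ because the masses are constant), but that theorem is far deeper than the statement at hand, so your direct combinatorial argument is the appropriate one.
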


\begin{theorem}[{\cite[Theorem 3.9]{26}}]\label{Theorem 1.9}

Given $\mathcal{M} = R\big( \mathcal{M}_n, v_n \big)$ a ITPFI factor, $r_{\infty} (\mathcal{M}, v)$ must be either $\{1\}$, $\{0\}$, $\{0, 1\}$, $[0, \infty)$ or $\{0\} \cup \big( \lambda^k \big)_{k\in \mathbb{Z}}$ for some $\lambda\in (0, 1)$.
    
\end{theorem}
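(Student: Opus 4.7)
The plan is to derive this classification directly from \textbf{Proposition 1.8}, which tells us that $r_{\infty}(\mathcal{M}, v)$ is closed in $[0, \infty)$ and $G := r_{\infty}(\mathcal{M}, v) \setminus \{0\}$ is a multiplicative subgroup of $\mathbb{R}^*_+$. The theorem thus reduces to a purely topological-algebraic classification: describe all closed subsets $S \subseteq [0, \infty)$ for which $S \cap (0, \infty)$ is a (possibly trivial) multiplicative subgroup of $\mathbb{R}^*_+$.

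First I would classify all closed multiplicative subgroups of $\mathbb{R}^*_+$. Since $r_{\infty}(\mathcal{M}, v)$ is closed in $[0, \infty)$, its intersection with the open set $(0, \infty)$ equals $G$ and is closed in $(0, \infty)$. Using the topological group isomorphism $\log\colon \mathbb{R}^*_+ \to \mathbb{R}$, closed subgroups of $\mathbb{R}^*_+$ correspond bijectively to closed additive subgroups of $\mathbb{R}$, and the latter are well-known to be exactly $\{0\}$, $\alpha\mathbb{Z}$ for some $\alpha > 0$, or $\mathbb{R}$ itself (a non-trivial closed additive subgroup either has a smallest positive element, in which case it is infinite cyclic, or is dense and hence all of $\mathbb{R}$). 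Exponentiating, the closed subgroups of $\mathbb{R}^*_+$ are precisely $\{1\}$, $\lambda^{\mathbb{Z}} = \{\lambda^k : k \in \mathbb{Z}\}$ for some $\lambda \in (0,1)$, or $\mathbb{R}^*_+$.

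Next I would combine this with the closedness of $r_{\infty}(\mathcal{M}, v)$ in $[0, \infty)$ to decide whether $0$ is forced in. Case-by-case: (i) if $G = \{1\}$, then $0$ is not a limit point of $G$, so both $r_{\infty}(\mathcal{M}, v) = \{1\}$ and $r_{\infty}(\mathcal{M}, v) = \{0, 1\}$ are consistent with closedness; (ii) if $G = \lambda^{\mathbb{Z}}$, then $\lambda^k \to 0$ as $k \to \infty$, so by closedness $0 \in r_{\infty}(\mathcal{M}, v)$, giving $r_{\infty}(\mathcal{M}, v) = \{0\} \cup \lambda^{\mathbb{Z}}$; (iii) if $G = \mathbb{R}^*_+$, then $0$ is again a limit point, hence $r_{\infty}(\mathcal{M}, v) = [0, \infty)$; and (iv) if $G = \emptyset$ (reading \textbf{Proposition 1.8} as permitting the empty set), then $r_{\infty}(\mathcal{M}, v) \subseteq \{0\}$, giving $r_{\infty}(\mathcal{M}, v) = \{0\}$. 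This exhausts the five cases listed in the statement.

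The main obstacle is essentially bookkeeping rather than depth: one must ensure that the statement of \textbf{Proposition 1.8} is read in a manner consistent with the possibility $G = \emptyset$ needed for the $\{0\}$ case, and verify that $r_{\infty}(\mathcal{M}, v)$ is non-empty (so that $r_{\infty}(\mathcal{M}, v) = \emptyset$ is not an additional possibility that needs to be ruled out). Aside from these conventions, the entire argument reduces to the standard classification of closed subgroups of $\mathbb{R}$ together with the elementary observation that any non-trivial closed subgroup of $\mathbb{R}^*_+$ accumulates at $0$, so that closedness of $r_{\infty}(\mathcal{M}, v)$ in $[0, \infty)$ forces $0$ to be included in the last three cases.
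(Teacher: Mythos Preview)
The paper does not supply its own proof of this statement: \textbf{Theorem 1.9} is simply quoted from \cite[Theorem 3.9]{26} as background, so there is no in-paper argument to compare against. Your derivation from \textbf{Proposition 1.8} is correct and is in fact the standard one---passing through $\log$ to the classification of closed additive subgroups of $\mathbb{R}$ and then checking whether $0$ is forced into $r_\infty(\mathcal{M},v)$ by closedness in $[0,\infty)$.

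The only wrinkle you already identified is real but minor: as stated, \textbf{Proposition 1.8} says $r_\infty(\mathcal{M},v)\setminus\{0\}$ is a multiplicative subgroup, which strictly speaking forces $1\in r_\infty(\mathcal{M},v)$ and would exclude the listed case $r_\infty(\mathcal{M},v)=\{0\}$. In the Araki--Woods source \cite{26} this case is handled separately (and is tied to whether the eigenvalue list can be degenerate), so your reading of \textbf{Proposition 1.8} as ``subgroup or empty'' is the intended one. Likewise, non-emptiness of $r_\infty(\mathcal{M},v)$ is guaranteed in \cite{26} directly from the definition, so $\emptyset$ need not appear on the list. With those two conventions fixed, your argument is complete.
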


\begin{defn}[{\cite[Definition 3.10]{26}}]\label{Definition 1.11}

Given $\lambda\in [0, 1]$, we use $R_{\lambda}$ to denote the ITPFT factor $R\big( \mathcal{M}_n, v_n \big)$ such that for all $n\in\mathbb{N}$, $\mathcal{M}_n = M_2(\mathbb{C})$ and there exists two non-negative real numbers $\lambda_1, \lambda_2$ (at least one of them is non-zero) such that $\operatorname{Sp}\big( v(n) \slash \mathcal{M}(n) \big) = \{\lambda_1, \lambda_2\}$ for all $n\in\mathbb{N}$ and $\lambda \lambda_1 = \lambda_2$.
    
\end{defn}

\begin{theorem}[{\cite[Theorem 5.9]{26}}]\label{Theorem 2.14}

Given $\mathcal{M}$ an ITPFI factor, $r_{\infty}(\mathcal{M})$ is an \textbf{algebraic invariance}, namely for each $x\in [0, 1]$, $x\in r_{\infty}(\mathcal{M})$ if and only if $\mathcal{M} \cong \mathcal{M} \otimes R_x$. Therefore we can write rewrite the asymptotic ratio set of $\mathcal{M}$ as the following:

$$
r_{\infty}(\mathcal{M}, v) = \big\{ x\in [0, 1]: \mathcal{M} \cong \mathcal{M} \otimes R_x \big\} \cup \big\{y\in (1, \infty): \mathcal{M} \cong \mathcal{M} \otimes R_{y^{-1}} \big\}
$$
    
\end{theorem}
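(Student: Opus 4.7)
The plan is to handle the two implications separately, with the ``only if'' direction being essentially formal and the ``if'' direction requiring the construction of an $R_x$ tensor factor inside $\mathcal{M}$.

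For $\mathcal{M}\cong\mathcal{M}\otimes R_x\Longrightarrow x\in r_\infty(\mathcal{M})$, I would first check directly from \textbf{Definition 1.6} and \textbf{Definition 1.11} that $x\in r_\infty(R_x,v)$: in the constant ITPFI structure of $R_x$ take $I_n=\{n\}$, $K_n^1=\{\lambda_1\}$, $K_n^2=\{\lambda_2\}$ with the unique bijection $\phi_n(\lambda_1)=\lambda_2=x\lambda_1$; non-summability of $\{\lambda(K_n^1)\}=\{\lambda_1\}$ is automatic. Next, I would record the monotonicity $r_\infty(\mathcal{N}_1\otimes\mathcal{N}_2,v_1\otimes v_2)\supseteq r_\infty(\mathcal{N}_2,v_2)$ by transporting the witnessing data $(I_n,K_n^1,K_n^2,\phi_n)$ for $\mathcal{N}_2$ into the combined ITPFI decomposition, pairing each relevant eigenvalue of $\mathcal{N}_2$ with the fixed top eigenvalue from the complementary side. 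Together these give $x\in r_\infty(R_x)\subseteq r_\infty(\mathcal{M}\otimes R_x)=r_\infty(\mathcal{M})$.

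For the converse, suppose $x\in r_\infty(\mathcal{M})$ is witnessed by $(I_n,K_n^1,K_n^2,\phi_n)_{n\in\mathbb{N}}$. After passing to a subsequence, which leaves the non-summability of $\{\lambda(K_n^1)\}$ intact by a block-grouping argument, I can assume $\max\{\,|x-\phi_n(\lambda)/\lambda|:\lambda\in K_n^1\,\}\leq 2^{-n}$. Inside each $\mathcal{M}(I_n)$, let $p_n^i$ be the spectral projection of the density operator corresponding to $K_n^i$, and pick a partial isometry $w_n\in\mathcal{M}(I_n)$ with $w_n^*w_n=p_n^1$, $w_nw_n^*=p_n^2$, implementing $\phi_n$ on eigenvectors. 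The subalgebra $N_n=\{p_n^1,p_n^2,w_n,w_n^*\}''\cong M_2(\mathbb{C})$ carries a vector state whose eigenvalue ratio is $\lambda(K_n^2)/\lambda(K_n^1)$, close to $x$ up to a summable error. I would then take $N=\bigotimes_n N_n$ inside $\mathcal{M}$ and, after normalizing via \textbf{Lemma 1.7}, identify $N\cong R_x$ as an $\text{ITPFI}_2$ factor.

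The main obstacle is the \emph{absorption} step: showing $\mathcal{M}\cong N\otimes\mathcal{M}'$ with $\mathcal{M}'\cong\mathcal{M}$, which would give $\mathcal{M}\cong\mathcal{M}\otimes R_x$. The algebraic decomposition $\mathcal{M}(I_n)=N_n\otimes N_n^c$, where $N_n^c$ is the relative commutant of $N_n$ in $\mathcal{M}(I_n)$, is routine; regrouping then yields $\mathcal{M}\cong N\otimes\mathcal{M}'$ with $\mathcal{M}'=\big(\bigotimes_n N_n^c\big)\otimes\mathcal{M}(\mathbb{N}\setminus\bigsqcup_n I_n)$, an ITPFI factor on the nose. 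The harder claim $\mathcal{M}'\cong\mathcal{M}$ follows from the five-case classification in \textbf{Theorem 1.9} combined with \textbf{Proposition 1.8}: since $x\in r_\infty(\mathcal{M})$ already lies in the multiplicative subgroup $r_\infty(\mathcal{M})\setminus\{0\}$, tensoring with $R_x$ cannot enlarge $r_\infty$, so $r_\infty(\mathcal{M}')=r_\infty(\mathcal{M})$, and the Araki--Woods classification of ITPFI factors by $r_\infty$ (together with type) forces $\mathcal{M}'\cong\mathcal{M}$. Invoking this last external classification input is where the bulk of the work lives, and is the step I expect to be the main obstacle in a self-contained proof.
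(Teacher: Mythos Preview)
The paper does not give its own proof of this statement: it is quoted verbatim as a preliminary result from Araki--Woods \cite[Theorem 5.9]{26}, with no accompanying \texttt{proof} environment. So there is nothing in the present paper to compare your argument against.

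That said, your proposal has a genuine gap at the absorption step. You argue that once $\mathcal{M}\cong N\otimes\mathcal{M}'$ with $N\cong R_x$, the identification $\mathcal{M}'\cong\mathcal{M}$ follows from ``the Araki--Woods classification of ITPFI factors by $r_\infty$ (together with type).'' No such classification exists in the generality you need: in the type-$III_0$ case there are uncountably many pairwise non-isomorphic ITPFI factors, all sharing $r_\infty=\{0,1\}$, so equality of $r_\infty$ and of type cannot force $\mathcal{M}'\cong\mathcal{M}$. Even restricting to the cases $III_\lambda$ with $\lambda\in(0,1]$, invoking a classification here is circular, since the very theorem you are proving is one of the main inputs to any such classification (it is what makes $r_\infty$ an isomorphism invariant in the first place).

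The actual Araki--Woods argument avoids this by never asserting $\mathcal{M}'\cong\mathcal{M}$ abstractly. Instead, the non-summability of $\{\lambda(K_n^1)\}$ is used to peel off the $2\times 2$ blocks $N_n$ in such a way that a Kakutani-type product-measure equivalence criterion shows the \emph{remaining} ITPFI data is equivalent (as a reference vector) to the original; one obtains $\mathcal{M}\otimes R_x\cong\mathcal{M}$ by an explicit spatial isomorphism rather than by appeal to classification. Your construction of $N_n$ and $w_n$ is on the right track, but the endgame needs to be this direct equivalence-of-product-states argument, not a classification shortcut.
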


\subsection{Ratio set of a non-singular group action}

\begin{defn}

Given an ergodic non-singular group action $G\curvearrowright (X, \mathcal{B}, \mu)$ on a standard measure space, a \textbf{real-valued cocycle} is a function $c: G\times X \rightarrow \mathbb{R}$ that satisfies:

$$
c(g_1 g_2, x) = c(g_1, x) c(g_2, g_1^{-1} x)
$$
for all $x\in X$ and $g_1, g_2\in G$. In particular, the \textbf{Radon-Nikodym cocycle} of the given $G$-action is defined as the following:

$$
D: G\times X \rightarrow \mathbb{R}, \hspace{0.3cm} (g, x)\mapsto \frac{d(g\mu)}{d\mu}(x)
$$
and we can see for any $g_1, g_2\in G$ and $\mu$-almost every $x\in X$:

$$
\frac{d(g_2\mu)}{d\mu}\big( g_1^{-1}x \big) = \frac{d(g_1g_2)\mu}{dg_1 \mu}(x) \hspace{0.6cm} \Longrightarrow \hspace{0.6cm}
\frac{d(g_1g_2)\mu}{d\mu}(x) = \frac{dg_1\mu}{d\mu}(x) \, \frac{dg_2\mu}{d\mu}\big( g_1^{-1}x \big)
$$
If we let $R$ denote the orbit equivalence of the given group action (as a subset of $X\times X$), we can define the Radon-Nikodym cocycle in the following way: for any $g\in G$ and $x\in X$:

$$
D(x, gx) = \frac{d\big( g^{-1}\mu \big)}{d\mu}(x)
$$
which implies for any $g_1, g_2\in G$ and $\mu$-almost every $x\in X$:

$$
D(x, g_1x) D(g_1x, g_2g_1x) =  \frac{d\big( g_1^{-1}\mu \big)}{d\mu}(x)  \frac{d\big( g_2^{-1}\mu \big)}{d\mu}(g_1 x) = \frac{d\big( g_1^{-1}g_2^{-1} \mu \big)}{d\mu}(x) = D(x, g_2g_1x)
$$
    
\end{defn}

\begin{defn}\label{Definition 1.14}

Given an ergodic non-singular group action $G\curvearrowright (X, \mathcal{B}, \mu)$ on a standard measure space and a real-valued cocycle $c$, the \textbf{essential range} of $c$ is the set of all real numbers $r$ such that for any $\epsilon\in (0, 1)$ and any measurable subset $B\subseteq X$ with $\mu(B)>0$, there exist $B_1$, $B_2$ two non-negligible subsets of $B$ and $g\in G$ such that $gB_1 \subseteq B_2$ and $\big\vert\, c(g, x) - r \,\big\vert < \epsilon$ for all $x\in B_1$. The essential range of the Radon-Nikodym cocycle is called the \textbf{ratio set} of the given group action, and is denoted by $r_G(\mu)$.

\end{defn}

\begin{theorem}[{\cite[Theorem 3.9]{18}}]\label{Theorem 1.13}

In the set-up of \textbf{Definition \ref{Definition 1.14}}, $r_G(\mu)$ the ratio set of a given non-singular group action $G\curvearrowright (X, \mathcal{B}, \mu)$ satisfies:

\begin{itemize}
    \item $r_G(\mu)$ is a closed subset of $[0, \infty)$.
    \item $r_G(\mu) \backslash \{0\}$ is a multiplicative subgroup of $(0, \infty)$.
    \item $r_G(\mu)=r_G(\sigma)$ for any measures $\sigma$ that is equivalent to $\mu$.
\end{itemize}
    
\end{theorem}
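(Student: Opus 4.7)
The three clauses will be proved separately, each by manipulating the triples $(B_1, B_2, g)$ furnished by \textbf{Definition \ref{Definition 1.14}}. Closedness is the easiest: if $r_n \in r_G(\mu)$ with $r_n \to r$, then given $(\epsilon, B)$ with $\mu(B) > 0$ I would pick $n$ so that $|r_n - r| < \epsilon/2$, apply the essential-range condition to $r_n$ with tolerance $\epsilon/2$, and invoke the triangle inequality to get the triple witnessing $r$.

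For the group structure, I handle multiplicative closure and inversion separately. Given $r_1, r_2 \in r_G(\mu) \setminus \{0\}$, a pair $(\epsilon, B)$, and an auxiliary $\delta > 0$ to be fixed later, I first apply the definition for $r_1$ to $B$, obtaining $B_1, B_2 \subseteq B$ and $g_1 \in G$ with $g_1 B_1 \subseteq B_2$ and $D(x, g_1 x) \in (r_1 - \delta, r_1 + \delta)$ on $B_1$. Since the action is nonsingular, $g_1 B_1 \subseteq B$ still has positive measure, so applying the definition for $r_2$ to $g_1 B_1$ yields $B_1' \subseteq g_1 B_1$ and $g_2 \in G$ with $g_2 B_1' \subseteq g_1 B_1 \subseteq B$ and $D(y, g_2 y)$ within $\delta$ of $r_2$ on $B_1'$. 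On the non-negligible set $\tilde{B}_1 := g_1^{-1} B_1' \subseteq B_1$, the identity $D(x, g_2 g_1 x) = D(x, g_1 x)\, D(g_1 x, g_2 g_1 x)$ derived at the end of the cocycle discussion gives $D(x, g_2 g_1 x) \approx r_1 r_2$, so that $r_1 r_2 \in r_G(\mu)$ once $\delta$ is chosen small relative to $\epsilon, r_1, r_2$. For inversion, an $(\epsilon, B)$-witness $(B_1, B_2, g)$ for $r > 0$ immediately yields the witness $(g B_1, B_1, g^{-1})$ for $r^{-1}$, since $D(gx, x) = D(x, gx)^{-1}$ and $r > 0$ keeps the reciprocal finite.

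For invariance under an equivalent measure $\sigma = f \mu$ (with $f > 0$ a.e.), the key computation is the transformation rule $D_\sigma(x, gx) = D_\mu(x, gx) \cdot f(gx)/f(x)$, which follows from the chain rule for Radon--Nikodym derivatives. Given $r \in r_G(\mu)$, $(\epsilon, B)$ with $\sigma(B) > 0$, and a small $\eta > 0$, I would first localize: partition $(0, \infty)$ into small multiplicative intervals and choose one whose $f$-preimage intersected with $B$ yields a set $C \subseteq B$ of positive measure on which $f$ varies by a factor less than $1 + \eta$. Applying the essential-range condition for $r$ with respect to $\mu$ to $C$ produces $C_1, C_2 \subseteq C$ and $g$ with $g C_1 \subseteq C_2 \subseteq C$; since both $x$ and $gx$ lie in $C$, the correction factor $f(gx)/f(x)$ is within $2\eta$ of $1$, and $D_\sigma(x, gx)$ lies within $\epsilon$ of $r$ once $\delta, \eta$ are small. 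The reverse inclusion follows symmetrically applied to $\mu = f^{-1} \sigma$.

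The main obstacle I anticipate is the bookkeeping in the product step of the group argument: one must apply the $r_2$-clause to $g_1 B_1$ rather than to $B_2$ so that the composite $g_2 g_1$ — in the order dictated by the cocycle convention in \textbf{Definition \ref{Definition 1.14}} — both stays inside $B$ and multiplies the two local approximations correctly. In part (iii), the subtlety is that localization must be performed on $B$ itself before invoking the essential-range condition, since a generic $g$ would move points off any previously chosen small set; once the correct order of operations is fixed in each part, the remaining estimates are routine.
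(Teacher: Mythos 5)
Your argument is correct, but note that the paper does not prove this statement at all: it is quoted as a known result from the cited reference (Theorem 3.9 of \cite{18}), so there is no internal proof to compare against. Your three steps are the standard ones and each goes through: the triangle-inequality argument for closedness, the composition $D(x,g_2g_1x)=D(x,g_1x)D(g_1x,g_2g_1x)$ applied on $g_1^{-1}B_1'\subseteq B_1$ for products, the witness $(gB_1,B_1,g^{-1})$ for inverses, and the localization of $f=d\sigma/d\mu$ on a set where it varies by at most a factor $1+\eta$ together with the rule $D_\sigma(x,gx)=D_\mu(x,gx)\,f(gx)/f(x)$ for equivalence invariance. The only point worth adding explicitly is that $r_G(\mu)\backslash\{0\}$ is nonempty because $1\in r_G(\mu)$ always (take $g$ the identity, $B_1=B_2=B$), which is needed before closure under products and inverses yields a subgroup.
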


\begin{defn}

Given an ergodic non-singular group action $G\curvearrowright \big(X, \mathcal{B}, \mu\big)$ on a standard measure space, $\mu$ is of:

\begin{itemize}

    \item type-$I$ if $\mu$ is equivalent to a purely atomic measure. When $\mu$ is of type-$I$, $\mu$ is of type-$I_n$ for some $n\in \mathbb{N}$ if $\mu$ only has $n$-many atoms; otherwise $\mu$ is of type-$I_{\infty}$.

    \item type-$II$ if $\mu$ is equivalent to a $\sigma$-finite $G$-invariant measure. When $\mu$ is of type-$II$, $\mu$ is of type-$II_1$ if $\mu$ is equivalent to a finite $G$-invariant measure; otherwise $\mu$ is of type-$II_{\infty}$ measure.

    \item type-$III$ if $\mu$ is not of type-$I$ or type-$II$. When $\mu$ is of type-$III_0$ if the ratio set of $\mu$ is $\{0, 1\}$; $\mu$ is of type-$III_{\lambda}$ for some $\lambda\in (0, 1)$ if the ratio set of $\mu$ is $\{0\}\cup \big\{ \lambda^n \big\}_{n\in \mathbb{Z}}$; $\mu$ is of type-$III_1$ if the ratio set of $\mu$ is $[0,\infty)$.
    
\end{itemize}
    
\end{defn}

\begin{lem}[{\cite[\textbf{Lemma 15}]{12}}]\label{Remark 1.21}

In the set-up of \textbf{Definition 1.15}:

\begin{enumerate}[label = (\arabic*)]

    \item if $\mu$ is of type-$III$, then $0$ is in the ratio set of $\mu$.
    \item $\mu$ is of type-$II$ if and only if $r_G(\mu) = \{1\}$.
    
\end{enumerate}
    
\end{lem}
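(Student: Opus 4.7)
The plan is to establish (2) first, since (1) then follows quickly from (2) together with the closedness and multiplicative--group structure of the ratio set supplied by \textbf{Theorem \ref{Theorem 1.13}}.

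For the forward direction of (2), suppose $\mu$ is of type-$II$ and let $\nu$ be a $\sigma$-finite $G$-invariant measure equivalent to $\mu$. Then the Radon--Nikodym cocycle of $\nu$ is identically $1$, so its essential range is $\{1\}$; by the third bullet of \textbf{Theorem \ref{Theorem 1.13}}, $r_G(\mu)=r_G(\nu)=\{1\}$. For the reverse direction, suppose $r_G(\mu)=\{1\}$. I would appeal to the standard cocycle--rigidity theorem (due to Schmidt) that a real-valued Borel cocycle whose essential range consists only of the trivial value is a coboundary. Applied to the Radon--Nikodym cocycle $D$, this yields a measurable $f:X\to(0,\infty)$ with $D(g,x)=f(x)/f(g^{-1}x)$ $\mu$-a.e. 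A direct change of variables then shows that $d\nu:=f\,d\mu$ defines a $\sigma$-finite $G$-invariant measure equivalent to $\mu$, so $\mu$ is of type-$II$.

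For (1), I would argue by contradiction. Assume $\mu$ is of type-$III$ but $0\notin r_G(\mu)$. Since $\mu$ is not of type-$II$, (2) forces $r_G(\mu)\neq\{1\}$, so there is some $\lambda\in r_G(\mu)$ with $\lambda\neq 1$. Because $r_G(\mu)\setminus\{0\}=r_G(\mu)$ is a multiplicative subgroup of $(0,\infty)$, after replacing $\lambda$ by $\lambda^{-1}$ if necessary we may take $\lambda\in(0,1)$, and then $\lambda^{n}\in r_G(\mu)$ for every $n\in\mathbb{N}$. Since $\lambda^{n}\to 0$ and $r_G(\mu)$ is closed in $[0,\infty)$, we conclude $0\in r_G(\mu)$, contradicting our assumption.

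The only genuinely non-trivial step is the reverse direction of (2), which I would invoke as a black box rather than re-derive: Schmidt's coboundary theorem for cocycles with trivial essential range proceeds via an exhaustion and measurable-selection argument (or, equivalently, via an analysis of the Maharam extension), standard but substantial. Everything else is a short combination of the invariance of $r_G$ under equivalence with the elementary observation that any non-trivial closed subgroup of the multiplicative group $(0,\infty)$ accumulates at $0$ in $[0,\infty)$.
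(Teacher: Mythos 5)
The paper does not prove this lemma at all: it is imported verbatim as \cite[Lemma 15]{12}, so there is no internal argument to compare yours against; your proposal is a reconstruction of the standard proof, and its overall architecture (prove (2), then get (1) from closedness plus the subgroup property of Theorem \ref{Theorem 1.13}) is sound. The forward direction of (2) and the deduction of (1) are correct as written — note only that you implicitly use $1\in r_G(\mu)$, which is supplied by the subgroup statement in Theorem \ref{Theorem 1.13}.

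The one point you must state more carefully is the black box in the reverse direction of (2). As phrased — ``a real-valued Borel cocycle whose essential range consists only of the trivial value is a coboundary'' — the statement is false for the additive cocycle $\log D$: a type-$III_0$ system has finite essential range $\{0\}$ for $\log D$ (ratio set $\{0,1\}$ multiplicatively) and yet $\log D$ is not a coboundary; indeed the gap between ``essential range $=\{0\}$'' and ``coboundary'' is exactly the content of the lemma you are proving. Schmidt's theorem requires the essential range taken in $\overline{\mathbb{R}}=\mathbb{R}\cup\{\infty\}$ to be $\{0\}$, equivalently, in the paper's multiplicative convention of Definition \ref{Definition 1.14}, that $0\notin r_G(\mu)$ in addition to $r_G(\mu)\subseteq\{1\}$. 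Your hypothesis $r_G(\mu)=\{1\}$ does supply this, so the argument goes through, but you should make explicit that the exclusion of $0$ from the ratio set is being used essentially, and cite the version of the coboundary theorem with the point at infinity (or argue directly via the Maharam extension/associated flow as in Section 1.3). The remaining step, that the coboundary relation $D(g,x)=f(x)/f(g^{-1}x)$ yields the $\sigma$-finite invariant measure $d\nu=f\,d\mu$, is routine since $\mu$ is a probability measure and $f$ is finite and positive almost everywhere.
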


\subsection{Maharam extension}

\begin{defn}[{\cite[Definition 3.1]{16}}]\label{Definition 1.20}

Given a non-singular group action $G\curvearrowright \big(X, \mathcal{B}, \mu\big)$, the \textbf{Maharam extension of} $G\curvearrowright (X, \mathcal{B}, \mu)$ is defined by:

$$
G\curvearrowright X \times \mathbb{R}: g\,\ast\, (x, t) = \big( g\,\cdot\,x, t+\log D (x, g\,\cdot\,x) \big) 
$$
where $D$ is the Radon-Nikodym cocycle of $G\curvearrowright \big(X, \mathcal{B}, \mu\big)$. In the Maharam extension, $\mathbb{R}$ is equipped with the finite Borel measure $\nu$ such that $d\nu = \operatorname{exp}\big( -\vert t\vert \big)dt$. The \textbf{associated flow of} $G\curvearrowright (X, \mathcal{B}, \mu)$ is the following $\mathbb{R}$-action on $\big( X\times \mathbb{R}, \mu\times \nu \big)$:

$$
\mathbb{R}\curvearrowright X\times  \mathbb{R}: r\,\cdot\, (x, t) = (x, r+t)
$$
One can check that associated flow of $G\curvearrowright X\times  \mathbb{R}$ commutes with the $G$-action of its Maharam extension. The $G$-action on the space of measurable functions on the Maharam extension is defined by the following: for each measurable function $F$ and $g\in G$:

$$
\forall\,(x, t)\in X\times\mathbb{R}, \hspace{0.5cm} \big(g\,\ast\,F \big)(x, t) = F\big( g\,\ast\, (x, t) \big) = F\big( g\,\cdot\,x, t+\log D(x, g\,\cdot\,x) \big)
$$
The $\mathbb{R}$-action on the space of measurable functions on the Maharam extension is defined by the following: for each measurable function $F$ and $r\in\mathbb{R}$:

$$
\forall\,(x, t)\in X\times\mathbb{R}, \hspace{0.5cm} \big(r\,\cdot\,F \big)(x, t) = F\big( r\,\cdot\, (x, t) \big) = F(x, t+r)
$$
From now on, we use $L^{\infty}\big( X\times \mathbb{R}, \mu \times \nu\big)^G$ to denote the space of $G$-invariant essentially bounded functions defined in the Maharam extension of $G\curvearrowright \big(X, \mathcal{B}, \mu \big)$.
    
\end{defn}

In the set-up of \textbf{Definition \ref{Definition 1.20}}, if we let $Z$ denote the space of ergodic components in the system and $\eta$ denote the image of $\mu$ under the quotient mapping, then $L^{\infty}\big( X\times \mathbb{R}, \mu \times \nu\big)^G$ can be identified as $L^{\infty}(Z, \eta)$. Equivalently the associated flow can be viewed as the $\mathbb{R}$-action on $L^{\infty}(Z, \eta)$. According to \cite{13}, the classification of $\mu$ can  then be described by the kernel of the associated flow as the following:

\begin{theorem}[\cite{13}]

In the set-up of \textbf{Definition \ref{Definition 1.20}}, the associated flow of $G\curvearrowright (X, \mathcal{B}, \mu)$ is:

\begin{enumerate}[label = (\arabic*)]

    \item conjugate with the translation action of $\mathbb{R}$ on itself if and only if $\mu$ is of type-$I$ or type-$II$.

    \item trivial if and only if $\mu$ is of type-$III_1$.

    \item conjugate with translation action $\mathbb{R}$ on $\mathbb{R}\slash \mathbb{Z} \log\lambda$ if and only if $\lambda$ is of type-$III_{\lambda}$.

    \item Every orbit of the associated flow (viewed as the the action $\mathbb{R}\curvearrowright (Z, \eta)$) is $\eta$-null if and only if $\mu$ is of type-$III_0$.
    
\end{enumerate}

\end{theorem}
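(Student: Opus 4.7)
The plan is to relate the ratio set $r_G(\mu)$ to the stabilizer of the associated flow on the space $(Z, \eta)$ of ergodic components of the Maharam extension. The key bridge I would establish first is that, for $r \in (0, \infty)$, $r \in r_G(\mu)$ if and only if translation by $\log r$ acts trivially on $L^{\infty}(Z, \eta)$. For the forward direction, lift $F \in L^{\infty}(Z, \eta)$ to a $G$-invariant $\tilde F \in L^{\infty}(X \times \mathbb{R})$; for any positive-measure rectangle $B \times J$, \textbf{Definition \ref{Definition 1.14}} supplies $g \in G$ and $B_1 \subseteq B$ with $gB_1 \subseteq B$ and $|\log D(x, gx) - \log r| < \epsilon$ on $B_1$. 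The $G$-invariance identity $\tilde F(x, t) = \tilde F(gx, t + \log D(x, gx))$ together with a Fubini/Lebesgue-density argument upgrades the closeness of $\log D(x, gx)$ to $\log r$ into the equality $\tilde F(x, t) = \tilde F(x, t + \log r)$ almost everywhere; running over a generating family of rectangles gives the required invariance. The reverse direction is a Rokhlin-style partitioning argument producing the required $g \in G$ from triviality of the $\log r$-translation on $L^{\infty}(Z, \eta)$.

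With this bridge, the set $T = \{s \in \mathbb{R} : \text{translation by } s \text{ acts trivially on } L^{\infty}(Z, \eta)\}$ is a closed subgroup of $\mathbb{R}$ equal to the closure of $\log\bigl(r_G(\mu) \cap (0, \infty)\bigr)$. Part (2) is immediate: $\mu$ is of type-$III_1$ iff $r_G(\mu) = [0, \infty)$ iff $T = \mathbb{R}$ iff the associated flow is trivial. For (3), $\mu$ is of type-$III_\lambda$ iff $T = \mathbb{Z}\log\lambda$; the flow then factors through an ergodic free $\mathbb{R}/\mathbb{Z}\log\lambda$-action on $(Z, \eta)$, and since any ergodic free action of a compact group on a standard probability space is conjugate to the translation action on the group with its Haar measure, we conclude $(Z, \eta) \cong \mathbb{R}/\mathbb{Z}\log\lambda$ with translation flow.

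For (1), if $\mu$ is of type-$I$ or type-$II$, fix a $G$-invariant $\sigma$-finite $\tilde\mu \sim \mu$ and set $f = \log(d\tilde\mu/d\mu)$; a direct computation with the identity $d(g\mu)/d\mu(y) = e^{f(y) - f(g^{-1}y)}$ gives $\log D(x, gx) = f(x) - f(gx)$. The map $\Phi(x, t) = (x, t + f(x))$ conjugates the Maharam $G$-action to the action trivial on the $\mathbb{R}$-coordinate; its ergodic components are the slices $X \times \{t\}$, yielding $(Z, \eta) \cong (\mathbb{R}, \nu)$ with the associated flow equal to translation. Conversely, if the associated flow is conjugate to translation on $\mathbb{R}$, pulling back the coordinate function yields an $\mathbb{R}$-equivariant, $G$-invariant measurable map $q : X \times \mathbb{R} \to \mathbb{R}$; $\mathbb{R}$-equivariance forces $q(x, t) = t + f(x)$ for some measurable $f$, and $G$-invariance gives $\log D(x, gx) = f(x) - f(gx)$, so that $d\tilde\mu = e^f d\mu$ is a $G$-invariant $\sigma$-finite measure equivalent to $\mu$.

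Finally, (4) is obtained by elimination using Mackey's classification of ergodic $\mathbb{R}$-actions on standard probability spaces: every such action is either trivial, a circle rotation, free with a Borel transversal (hence conjugate to translation on $\mathbb{R}$), or free without a transversal (in which case all orbits are null). Parts (1)--(3) identify the first three possibilities with type-$I$/$II$, type-$III_1$, and type-$III_\lambda$; since the four measure-theoretic types partition the ergodic non-singular actions, type-$III_0$ must correspond to the remaining case where all $\mathbb{R}$-orbits on $Z$ are $\eta$-null. The main obstacle of the proof is the bridge in the first paragraph: translating between essential-range values of the Radon-Nikodym cocycle on $X$ and invariance properties of measurable functions on the ergodic-component space $Z$ requires careful deployment of $G$-invariance lifting, Lebesgue density, and a non-singular Rokhlin lemma.
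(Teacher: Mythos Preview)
The paper does not prove this theorem; it is stated as a citation from \cite{13} (the Krieger/Connes--Takesaki classification of the associated flow) and used as a black box in the subsequent arguments. There is therefore no proof in the paper to compare your proposal against.

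That said, your outline is the standard route to this classical result and is broadly sound. The bridge you identify---that $r\in r_G(\mu)\setminus\{0\}$ if and only if translation by $\log r$ acts trivially on $L^\infty(Z,\eta)$---is exactly the content usually called ``the ratio set equals the exponential of the point stabilizer of the associated flow,'' and your sketch of both directions (density/Fubini for one, a Rokhlin-type exhaustion for the other) is the right shape. Two places deserve a bit more care. First, in part~(3) you implicitly use that the associated flow on $(Z,\eta)$ is ergodic; this follows because any $\mathbb{R}$-invariant set in $Z$ lifts to a $(G\times\mathbb{R})$-invariant set in $X\times\mathbb{R}$, and ergodicity of $G\curvearrowright X$ forces such a set to be trivial---but you should state it. Second, the ``Mackey classification'' you invoke in part~(4) is not quite a single off-the-shelf theorem; the clean way to finish is to note that once $T=\{0\}$ the flow is free and ergodic, and the dichotomy \emph{smooth} (admits a Borel transversal, hence conjugate to translation on $\mathbb{R}$) versus \emph{properly ergodic} (every orbit null) is what distinguishes type~$II$ from type~$III_0$; your elimination argument then goes through.
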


\subsection{Bernoulli scheme}

\begin{defn}\label{Definition 1.16}

Given a sequence of countable spaces $(X_n)_{n\in \mathbb{N}}$, a \textbf{Bernoulli scheme} is the product space $X=\prod_{n\in \mathbb{N}}X_n$ endowed with a product measure $\mu=\otimes_{n\in \mathbb{N}} \mu_n$ where each $\mu_n$ is fully supported on $X_n$, the $\sigma$-algebra generated by cylinder sets in $X$ and the \textbf{synchronous tail relation} $R$ is defined by: any $\bm{x}$, $\bm{y}\in X$, we have $(\bm{x}, \bm{y})\in R$ if and only if there exists $N\in\mathbb{N}$ such that $x_n=y_n$ for all $n\geq N$. 
    
\end{defn}

\begin{defn}\label{Definition 1.17}

In the set-up of \textbf{Definition \ref{Definition 1.16}}, for each $n\in\mathbb{N}$, let $G_n$ be the permutation group of $X_n$ and $\bigtimes_{n\in \mathbb{N}}G_n$ be the group consisting the product group. For each $n\in\mathbb{N}$, let $e_n$ denote the identity of $G_n$. Then define:

$$
G = \left\{ \bm{g} = (g_n)_{n\in \mathbb{N}}\in \bigtimes_{n\in \mathbb{N}}G_n: \exists\,m\in\mathbb{N}\, \text{  such that  }\, \forall\, n\geq m,\, g_n=e_n \right\}
$$

\end{defn}

Clearly the synchronous tail relation coincides with the orbit equivalence relation associated with $G$ that acts on $X$ coordinate-wise. For each $g\in G$, we have:

$$
\bm{g}\mu = \bigotimes_{n\in \mathbb{N}} g_n\mu_n
$$
which implies that the $G$-action in \textbf{Definition \ref{Definition 1.17}} is ergodic and non-singular with respect to $\mu$. First note that the precise descriptions of type-$I$ and type-$II$ Bernoulli scheme are proven in \cite{2}. Then, with results in \cite{2}, descriptions of type-$I$ and type-$II_1$, type-$III$ (resp.) IFPTI factors are proven in \cite{26}, \cite{80} (resp.).

\begin{theorem}[{\cite[Theorem 1-(1)]{2}}]\label{Theorem 1.18}

In a Bernoulli scheme $G\curvearrowright (X, \mathcal{B}, \mu)$, $\mu$ is of type-$I$ if and only if:

\begin{equation}\label{e1}
\sum_{n\in\mathbb{N}}\left( 1-\max_{a\in X_n} \mu_n(a) \right) < \infty
\end{equation}
    
\end{theorem}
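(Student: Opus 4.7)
The plan is to apply the Borel--Cantelli lemma, exploiting that each $G$-orbit in a Bernoulli scheme is countable. Let $p_n := \max_{a\in X_n}\mu_n(a)$ and fix $a_n\in X_n$ realizing this maximum; set $\bm{a}=(a_n)_{n\in\mathbb{N}}\in X$.

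For the direction (\ref{e1}) $\Rightarrow$ type-$I$, consider the measurable sets $E_n := \{x\in X : x_n\neq a_n\}$, so that $\mu(E_n)=1-p_n$. The hypothesis $\sum_n \mu(E_n)<\infty$ and the convergence half of Borel--Cantelli (which requires no independence) give $\mu(\limsup_n E_n)=0$; that is, $\mu$-almost every $x$ satisfies $x_n=a_n$ for all but finitely many $n$. Since the $G$-orbit of $\bm{a}$ coincides with the tail class $\{y\in X: y_n=a_n \text{ for all but finitely many } n\}$ (using only that $G$ consists of finitely supported permutations in $\bigtimes_n G_n$), this orbit is countable, and $\mu$ is concentrated on it. Since $X$ is a standard Borel space, each singleton is measurable, so $\mu$ decomposes as $\sum_{y} \mu(\{y\})\delta_y$ over this countable orbit and is purely atomic. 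Hence $\mu$ is of type-$I$.

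For the converse I would argue contrapositively. Assume $\sum_n(1-p_n)=\infty$. For any $y\in X$, since $\mu_n(y_n)\leq p_n$ we have $\sum_n(1-\mu_n(y_n))=\infty$, so by the standard product-convergence criterion $\prod_n q_n > 0 \iff \sum_n(1-q_n)<\infty$ (valid for $q_n\in(0,1]$), we get $\prod_n\mu_n(y_n)=0$. Writing $\{y\}$ as the decreasing intersection of cylinders $\{x: x_k=y_k,\ k\leq N\}$ and using continuity of $\mu$ from above yields $\mu(\{y\})=\prod_n\mu_n(y_n)=0$. Thus $\mu$ has no atoms. But a measure equivalent to a purely atomic one must itself charge the (at most countable) set of atoms of that measure, a contradiction. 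Therefore $\mu$ is not of type-$I$.

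I do not anticipate a serious obstacle; the argument is essentially a combination of Borel--Cantelli with two standard facts (concentration on a countable set implies pure atomicity in a standard Borel space, and the classical criterion for non-vanishing of an infinite product). The only subtle point worth flagging is the identification of the tail class of $\bm{a}$ with its $G$-orbit, which is what converts the Borel--Cantelli conclusion into a statement about the orbit structure of the action.
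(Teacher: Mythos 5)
This statement is quoted in the paper as a known result from the literature (Theorem 1-(1) of the cited reference) and no proof of it appears in the paper, so there is nothing internal to compare against; judged on its own, your argument is correct and is essentially the classical one. The forward direction via the convergence half of Borel--Cantelli, together with the identification of the tail class of the maximizing point $\bm{a}$ with its $G$-orbit (which is indeed valid, since $G$ consists of finitely supported tuples of full permutations, so the orbit of $\bm{a}$ is exactly the countable set of points differing from $\bm{a}$ in finitely many coordinates), and the converse via $\prod_n \mu_n(y_n)=0$ for every $y$ (so $\mu$ is atomless, hence cannot be equivalent to a purely atomic measure) are both sound; the only point worth making explicit is that the atoms of a measure equivalent to the probability measure $\mu$ correspond to positive-$\mu$-measure singletons, which is exactly what the atomless conclusion rules out.
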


\begin{theorem}[{\cite[Theorem 1-(2)]{2}}]\label{Theorem 1.19}

In a Bernoulli scheme $G\curvearrowright \big(X, \mathcal{B}, \mu\big)$, $\mu$ is of type-$II_1$ if and only if all $X_n$ are finite, infinitely many $X_n$ are not singletons and:

\begin{equation}\label{e2}
\sum_{n\in \mathbb{N}}\sum_{a\in X_n} \frac{ \Big\vert\, 1-\sqrt{\mu_n(a) \big\vert\, X_n \,\big\vert} \,\Big\vert^2}{\big\vert\, X_n \,\big\vert} < \infty
\end{equation}
    
\end{theorem}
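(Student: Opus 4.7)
The plan is to reduce the problem to Kakutani's dichotomy theorem for equivalence of countable product measures, applied to $\mu$ and the natural uniform product. When each $X_n$ is finite, let $\nu_n^{0}$ denote the uniform probability on $X_n$ and set $\nu^{0} = \bigotimes_{n\in\mathbb{N}}\nu_n^{0}$. Since each $G_n$ permutes $X_n$ transitively and $\nu_n^{0}$ is uniform, $\nu^{0}$ is automatically $G$-invariant. A direct computation of the squared Hellinger distance gives
$$
H^2(\mu_n, \nu_n^{0}) = \frac{1}{2}\sum_{a\in X_n}\Bigl(\sqrt{\mu_n(a)} - \tfrac{1}{\sqrt{|X_n|}}\Bigr)^2 = \frac{1}{2}\sum_{a\in X_n}\frac{\bigl|\,1 - \sqrt{\mu_n(a)\,|X_n|}\,\bigr|^2}{|X_n|},
$$
so that (\ref{e2}) is, up to a factor of $2$, precisely the Kakutani summability condition $\sum_n H^2(\mu_n, \nu_n^{0}) < \infty$ that characterizes $\mu \sim \nu^{0}$.

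For sufficiency, assume all $X_n$ are finite, condition (\ref{e2}) holds, and infinitely many $X_n$ are non-singleton. Kakutani's theorem yields $\mu \sim \nu^{0}$, so $\nu^{0}$ provides a $G$-invariant probability measure in the measure class of $\mu$, which already rules out type-$III$ and type-$II_\infty$. The hypothesis that infinitely many $X_n$ are non-singleton forces $\nu^{0}(\{x\}) = \prod_n |X_n|^{-1} = 0$, so $\mu$ is non-atomic and cannot be of type-$I$. Combined with ergodicity of the $G$-action on any fully supported Bernoulli measure (Kolmogorov's $0$-$1$ law applied to the tail $\sigma$-algebra), $\mu$ is of type-$II_1$.

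For necessity, assume $\mu$ is of type-$II_1$, so there exists a $G$-invariant probability $\sigma \sim \mu$. I would first show every $X_n$ is finite: the marginal $\sigma_n'$ of $\sigma$ on $X_n$ is $G_n$-invariant (via the embedding $G_n \hookrightarrow G$) and fully supported (because $\sigma \sim \mu$ and $\mu_n$ is), but $G_n$ acts transitively on $X_n$, so its only fully supported invariant measure is a multiple of counting measure, which is a probability only when $|X_n| < \infty$. Next, I would show $\sigma = \nu^{0}$ by a cylinder-by-cylinder argument: transpositions $\tau = (a, b)\in G_n$ act on any finite cylinder and permute the restricted masses, so by induction on the number of fixed coordinates one obtains $\sigma(\{x_{n_1} = a_1, \ldots, x_{n_k} = a_k\}) = \prod_j |X_{n_j}|^{-1}$ for every finite cylinder. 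A $\pi$-$\lambda$ argument then promotes this to $\sigma = \nu^{0}$ on all of $\mathcal{B}$, and Kakutani's theorem yields (\ref{e2}). Finally, if only finitely many $X_n$ were non-singleton, then $\mu$ would be purely atomic on an essentially finite product and hence of type-$I$, contradicting type-$II_1$.

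The main technical obstacle is the uniqueness step $\sigma = \nu^{0}$: one must carefully combine invariance of $\sigma$ under each $G_n$ with Carathéodory extension to pin down $\sigma$ on every cylinder, and then translate Kakutani summability into the precise form appearing in (\ref{e2}). Once these are in place, the remainder is a standard application of Kakutani's dichotomy together with the atomic/non-atomic observations and the ergodicity supplied by the Kolmogorov $0$-$1$ law.
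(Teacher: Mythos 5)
The paper never proves this statement: Theorem \ref{Theorem 1.19} is quoted as a preliminary from the literature (reference [2]), so there is no internal proof to compare you against; I can only assess your argument on its own, and it is correct — indeed the Kakutani dichotomy is the classical route to exactly this criterion. Your Hellinger computation is right, $H^2(\mu_n,\nu_n^0)=\tfrac12\sum_{a\in X_n}\bigl(\sqrt{\mu_n(a)}-|X_n|^{-1/2}\bigr)^2=\tfrac12\sum_{a\in X_n}\bigl|1-\sqrt{\mu_n(a)\,|X_n|}\bigr|^2/|X_n|$, so (\ref{e2}) is precisely the Kakutani condition for $\mu\sim\nu^0$; sufficiency then follows because $\nu^0$ is a finite $G$-invariant measure and is non-atomic as soon as infinitely many $X_n$ are non-singletons, which rules out type-$I$. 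For necessity, the step you flag as the main obstacle (uniqueness, $\sigma=\nu^0$) is genuinely needed but can be streamlined: for any finite set of coordinates the marginal of a $G$-invariant probability is invariant under the product of the full symmetric groups $G_{n_1}\times\cdots\times G_{n_k}$, which acts transitively on the finite product $X_{n_1}\times\cdots\times X_{n_k}$, so every finite-dimensional marginal is uniform and a $\pi$-$\lambda$ argument gives $\sigma=\nu^0$; the same transitivity observation (equal singleton masses summing to one) yields finiteness of each $X_n$ without your full-support detour, and your transposition induction works as well. Two cosmetic points: ergodicity of the Bernoulli action is already supplied by the paper's discussion following Definition \ref{Definition 1.17}, so the Kolmogorov $0$--$1$ law invocation is redundant; and in both directions you implicitly use that type-$I$ and type-$II_1$ are mutually exclusive, which your non-atomicity observation does justify and is worth stating explicitly since the paper's definitions do not say it in so many words.
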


\begin{theorem}[{\cite[Lemma 2.14]{26}}, \cite{80}]\label{Theorem 1.28}

In the set-up of \textbf{Definition \ref{Definition 1.4}}, given an ITPFI factor $\mathcal{M} = R\big( \mathcal{M}_n, v_n \big)$ where for each $n\in\mathbb{N}$, $\mathcal{M}$ is an $I_{k_n}$ factor for some $2\leq k_n \leq \infty$, and:

$$
\operatorname{Sp}\big( v(n) \slash \mathcal{M}(n) \big) = \big\{ \lambda(n, i_n) \big\}_{1\leq i_n \leq k_n}
$$
Without losing generality, we can assume that for each $n\in \mathbb{N}$, $\lambda_{n, 1}$ is the maximum in $\operatorname{Sp}\big( v(n) \slash \mathcal{M}(n) \big)$. Then:

\begin{enumerate}[label = (\arabic*)]

    \item $\mathcal{M}$ is of type-$I$ if and only if the sequence $\big(1-\lambda_{n, 1} \big)$ is absolutely summable.
    \item $\mathcal{M}$ is of type-$II_1$ if and only if $k_n<\infty$ for all $n\in\mathbb{N}$ and:

    $$
    \sum_{n\in \mathbb{N}} \sum_{1\leq i_n \leq k_n} \frac{\vert\, 1-\sqrt{\lambda_{n, i}k_n} \,\vert^2}{k_n}
    $$
    \item $\mathcal{M}$ is of type-$III$ if and only if:

    $$
    \sum_{n\in \mathbb{N}} \sum_{1\leq i, j\leq k_n} \lambda_{n, i} \lambda_{n, j}\min\left( \left\vert\, \frac{\lambda_{n, i}}{\lambda_{n, j}} - 1\,\right\vert^2 \,,\, C \right)=\infty
    $$
    for some, and hence for all $C>0$.
    
\end{enumerate}
    
\end{theorem}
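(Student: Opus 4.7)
The plan is to identify $\mathcal{M}$ with a Bernoulli scheme and then invoke the existing characterizations \textbf{Theorem \ref{Theorem 1.18}} and \textbf{Theorem \ref{Theorem 1.19}} for parts (1) and (2), while part (3) will follow by a trichotomy together with a direct asymptotic calculation of the sum.

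First I would use \textbf{Lemma \ref{Lemma 1.7}} to normalize $\|v_n\|=1$ for every $n$, which forces $\rho_n$ to be a state and $\sum_{i_n} \lambda_{n,i_n}=1$. Setting $X_n=\{1,\dots,k_n\}$ and $\mu_n(i_n)=\lambda_{n,i_n}$, the product measure space $(X,\mu)=\prod_n (X_n,\mu_n)$ is a Bernoulli scheme in the sense of \textbf{Definition \ref{Definition 1.16}}. As proven in \cite{73}, the type of $\mathcal{M}$ agrees with the type of $(X,\mu)$ under the synchronous tail relation, so parts (1) and (2) follow at once from \textbf{Theorem \ref{Theorem 1.18}} and \textbf{Theorem \ref{Theorem 1.19}} respectively: in (1) the maximum $\max_{a\in X_n}\mu_n(a)$ equals $\lambda_{n,1}$ under the stated ordering, and in (2) the Hellinger summand on the right of (\ref{e2}) is literally the summand displayed for $\mathcal{M}$.

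For part (3) my plan is to combine the trichotomy (a Bernoulli scheme with a product probability measure is of type-I, type-$II_1$, or type-III) with the claim that the sum in (3) is infinite if and only if neither (\ref{e1}) nor (\ref{e2}) holds. The $C$-independence of the condition follows from the uniform comparison $\min(x^2,C_1)\asymp\min(x^2,C_2)$ for any $C_1,C_2>0$. The easy direction goes as follows: if $\sum_n(1-\lambda_{n,1})<\infty$, then for each $n$ the dominant contributions to the inner double sum come from the pairs $(1,j)$ and $(i,1)$ with $i,j\neq 1$, where the $\min$ saturates at $C$ and the total contribution is bounded by a constant times $(1-\lambda_{n,1})$, while the remaining pairs contribute $O\!\left((1-\lambda_{n,1})^2\right)$; if instead each $k_n$ is finite and (\ref{e2}) holds, then setting $\epsilon_{n,i}=\sqrt{\lambda_{n,i}k_n}-1$ and Taylor-expanding $\lambda_{n,i}/\lambda_{n,j}-1\approx 2(\epsilon_{n,i}-\epsilon_{n,j})$ near $(\epsilon_{n,i},\epsilon_{n,j})=(0,0)$ bounds the $n$-th inner sum by a constant multiple of $\sum_i \epsilon_{n,i}^2/k_n$, which is finite by assumption.

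The main technical obstacle is the converse: showing that finiteness of the sum forces at least one of (\ref{e1}) and (\ref{e2}) to hold. I would partition the indices at each level $n$ into a ``dominant'' set where $\lambda_{n,i_n}$ is comparable to $\lambda_{n,1}$ and a ``tail'' set where $\lambda_{n,i_n}$ is much smaller, and argue that finiteness of the sum forces either the tail mass to be summably small (yielding type-I) or the dominant values to equidistribute towards $1/k_n$ (yielding type-$II_1$). Controlling the cross terms between these two sets of indices, and verifying that the cutoff by $C$ does not obscure the necessary quadratic estimates in the equidistribution regime, is where the bookkeeping will be most delicate.
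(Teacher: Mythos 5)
The paper does not actually prove this statement: it is quoted as a known result of Araki--Woods and \cite{80}, so your attempt can only be judged on its own merits. Your reduction of parts (1) and (2) to \textbf{Theorem \ref{Theorem 1.18}} and \textbf{Theorem \ref{Theorem 1.19}} via the Bernoulli-scheme correspondence is in line with how these criteria were historically obtained (though note the paper only cites the correspondence, via \textbf{Proposition \ref{Proposition 2.1}}, for ratio sets in the type-$III$ case, so the general statement ``type of $\mathcal{M}$ $=$ type of $(X,\mu)$'' needs its own justification), and your ``easy direction'' estimates for (3) are essentially repairable: the Taylor expansion near $(0,0)$ is not uniformly valid, but the truncation by $C$ together with the inequality $\lambda_{n,i}\lambda_{n,j}\min\bigl(\vert \lambda_{n,i}/\lambda_{n,j}-1\vert^2, C\bigr)\le K_C\bigl(\sqrt{\lambda_{n,i}}-\sqrt{\lambda_{n,j}}\bigr)^2\min(\lambda_{n,i},\lambda_{n,j})$ does give the bound by $\sum_i \epsilon_{n,i}^2/k_n$.

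The genuine gap is in the converse of part (3): your argument rests on the trichotomy ``type $I$, type $II_1$, or type $III$'' and on the claimed equivalence ``the sum diverges iff neither (\ref{e1}) nor (\ref{e2}) holds,'' and both are false, because type-$II_\infty$ ITPFI factors (equivalently, type-$II_\infty$ Bernoulli schemes) exist. Concretely, take $k_n=2$ for all $n$, with $\mu_n=(1/2,1/2)$ for even $n$ and $\mu_n=(1-a_n,a_n)$ for odd $n$, where $a_n>0$ and $\sum_n a_n<\infty$. Then (\ref{e1}) fails (the even levels contribute $1/2$ each) and (\ref{e2}) fails (the odd levels contribute a quantity bounded below), yet the sum in (3) converges, since the even levels contribute $0$ and the odd levels contribute $O(a_n)$; and indeed the scheme is of type $II_\infty$, not $III$: the odd-coordinate factor measure is purely atomic, so $\mu$ is equivalent to the infinite $\sigma$-finite invariant measure (counting measure on the atoms of the odd part) $\otimes$ (uniform product on the even part). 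This also shows why your proposed dichotomy in the converse (``tail mass summably small $\Rightarrow$ type $I$'' or ``equidistribution towards $1/k_n$ $\Rightarrow$ type $II_1$'') cannot work: different levels $n$ may fall into different regimes, producing a semifinite but non-finite factor. The correct dividing line for convergence of the sum in (3) is semifiniteness, i.e.\ the existence of an equivalent $\sigma$-finite invariant measure, equivalently $\mu\sim\bigotimes_n c_n\,(\text{counting measure on }X_n)$ for some choice of constants $c_n>0$; establishing that equivalence requires a per-level choice of reference normalization in the spirit of Moore and Araki--Woods, not a reduction to the two global criteria (\ref{e1}) and (\ref{e2}).
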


\section{Main theorems}

In this section, we will show that given a Bernoulli scheme, to find its ratio set, there is no need to consider all partial isomorphisms but only those that are of the form of bijections between two elementary cylinder sets of the same length. This is inspired by the definition of an asymptotic ratio set (\textbf{Definition \ref{Definition 1.6}}). We will start by discussing the correspondence between a Bernoulli scheme and an ITPFI factor.\\

\noindent
Given a Bernoulli scheme $G\curvearrowright (X, \mathcal{B}, \mu)$, for each $n\in\mathbb{N}$, suppose $2 \leq \vert\, X_n\,\vert = k_n \leq \infty$ and $\mu_n = \big( \mu_n(i) \big)_{1 \leq i \leq k_n}$ can be represented as a summable positive-valued sequence. Then for each $n\in\mathbb{N}$:

\begin{itemize}
    \item if $k_n<\infty$, define $\mathcal{M}_n = M_{k_n}(\mathbb{C})$ and let $\rho_n$ be the unique density matrix whose eigenvalues are $\big( \mu_n(i) \big)_{1\leq i \leq k_n}$. After changing coordinates, we can further assume that the set of eigenvalues of $\rho_n$ is $\big( \lambda_n(i) \big)_{1\leq i \leq k_n}$ where $\{\lambda_n(i): 1\leq i \leq k_n\} = \{ \mu_n(i):1 \leq i \leq k_n\}$ and $\lambda_n(1) \geq \lambda_n(2) \geq \cdots \lambda_n(k_n)$. Then let $v_n$ be the unit vector such that for all $S\in \mathcal{M}_n$, $\langle\,Sv_n \,,\, v_n\,\rangle = \operatorname{Tr}(S\rho_n)$. Put $\operatorname{Sp}\big( v_n \slash \mathcal{M}_n \big) = \big( \lambda_n(i) \big)_{1\leq i \leq k_n}$.

    \item if $k_n=\infty$, define $\mathcal{M}_n = B(\ell^2)$ and let $\rho_n$ be the diagonalizable operator whose eigenvalues are $\big( \mu_n(i) \big)_{i\in \mathbb{N}}$. By the previous reasoning, we can assume that $\big( \mu_n(i) \big)_{i\in \mathbb{N}}$ is decreasing. Since $\big( \mu_n(i) \big)_{i\in \mathbb{N}}$ is summable, $\rho_n$ is a trace-class operator. Then there exists a unit vector $v_n\in \ell^2$ such that for all $S\in \mathcal{K}(\ell^2)$, $\langle\,Sv_n \,,\, v_n\,\rangle = \operatorname{Tr}(S\rho_n)$.
    
\end{itemize}

\noindent
Conversely, given $\mathcal{M} = R\big( \mathcal{M}_n, v_n \big)$ an ITPFI factor, since for each $n\in\mathbb{N}$, each $v_n$ is a unit vector, its associated density matrix (or trace-class operator) $\rho_n$ satisfies $\operatorname{Tr}(\rho_n)=1$. Then for each $n\in\mathbb{N}$, $\operatorname{Sp}\big( v(n) \slash \mathcal{M}(n) \big)$ is a summable decreasing sequence indexed by $\{1, 2, \cdots, k_n\}$ where $2\leq k_n = \operatorname{dim}(\mathcal{M}_n) \leq \infty$, and hence defines a probability measure on $\{1, 2, \cdots, k_n\}$. For each $n\in\mathbb{N}$, by defining $X_n = \{1, 2, \cdots, k_n\}$ and $\mu_n(i) = \lambda_{n, i}$ for all $1\leq i \leq k_n$, the Bernoulli scheme on $(X, \mu)$ where $X=\prod_nX_n$ and $\mu = \otimes_{n\in \mathbb{N}} \mu_n$ is uniquely determined by $\mathcal{M}$. 

\begin{prop}[{\cite[Proposition 2.6]{73}}]\label{Proposition 2.1}

Given a Bernoulli scheme $G\curvearrowright (X, \mathcal{B}, \mu)$, let $\mathcal{M}$ be its associated ITPFI factor. If $\mu$ is of type-$III$, $r_{\infty}(\mathcal{M}) = r_G(\mu)$.
    
\end{prop}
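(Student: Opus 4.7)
The plan is to prove the two inclusions separately using the dictionary set up in the paragraphs preceding the proposition. Each eigenvalue $\lambda = \prod_{n\in I}\mu_n(i_n)\in \operatorname{Sp}\big(v(I)\slash\mathcal{M}(I)\big)$ is identified with the configuration $(i_n)_{n\in I}\in X_I$ and with the cylinder $C_{(i_n)}=\{\bm{x}\in X: x_n=i_n\ \forall n\in I\}\subseteq X$, of measure $\mu(C_{(i_n)})=\lambda$. A subset $K\subseteq \operatorname{Sp}\big(v(I)\slash\mathcal{M}(I)\big)$ thus corresponds to a subset of $X_I$ and to a cylinder union $D_K\subseteq X$ with $\mu(D_K)=\lambda(K)$, while a bijection $\phi:K^1\to K^2$ with $|\phi(\lambda)\slash\lambda-x|<\epsilon$ becomes a partial isomorphism of the tail equivalence relation (a disjoint union of $G$-pieces, one per eigenvalue) whose Radon-Nikodym cocycle is $\epsilon$-close to $x$.

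For $r_{\infty}(\mathcal{M})\subseteq r_G(\mu)$, fix $x\in r_{\infty}(\mathcal{M})$ with data $(I_n,K_n^1,K_n^2,\phi_n)$ from \textbf{Definition \ref{Definition 1.6}}. For each $y\in K_n^1$ let $g_{n,y}\in G$ be the element acting on each coordinate $k\in I_n$ by the transposition swapping $y_k$ and $\phi_n(y)_k$; then $g_{n,y}$ maps $C_y$ bijectively onto $C_{\phi_n(y)}$ with cocycle $\phi_n(\lambda)\slash\lambda\approx x$ on $C_y$. To verify the condition of \textbf{Definition \ref{Definition 1.14}} at an arbitrary $B$ with $\mu(B)>0$, approximate $B$ by a cylinder $B'$ on coordinates $\{1,\ldots,N\}$ with $\mu(B\triangle B')$ small. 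For $n$ large enough that $I_n\cap\{1,\ldots,N\}=\emptyset$ the cylinder unions $D_n=\bigsqcup_{y\in K_n^1}C_y$ are independent of $B'$ and, since the $I_n$ are pairwise disjoint, mutually independent; the hypothesis $\sum_n\lambda(K_n^1)=\infty$ together with the second Borel--Cantelli lemma gives $\mu(\limsup D_n)=1$, forcing $\mu(B\cap C_y)>0$ for some $n$ and some $y\in K_n^1$. Because $g_{n,y}$ preserves $B'$, a Fubini-type comparison of the $y$- and $\phi_n(y)$-slices of $B$ in $X_{I_n^c}$ shows that $B_1=B\cap C_y\cap g_{n,y}^{-1}(B)$ is non-negligible for $n$ chosen so that $\mu(D_n)$ dominates the approximation error, which witnesses $x$ as an essential value.

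For the reverse inclusion $r_G(\mu)\subseteq r_{\infty}(\mathcal{M})$, starting with $x\in r_G(\mu)$ I would inductively produce the data of \textbf{Definition \ref{Definition 1.6}}. At stage $n$, apply the essential-range condition in the Bernoulli subscheme on coordinates $\mathbb{N}\setminus\bigcup_{k<n}I_k$, whose ratio set equals $r_G(\mu)$ since removing a finite block of coordinates does not alter the tail relation or its Radon-Nikodym cocycle. The resulting $g\in G$ has finite support $I_n$ disjoint from the earlier blocks, and since its cocycle depends only on $X_{I_n}$, the set on which that cocycle is $\tfrac{1}{n}$-close to $x$ is cylindrical in $X_{I_n}$, supplying $K_n^1\subseteq X_{I_n}$ and $K_n^2=g(K_n^1)$. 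The sets $K_n^1$ and $K_n^2$ are disjoint automatically for $x\neq 1$ (taking $\tfrac{1}{n}$ small) and for $x=1$ after removing fixed points of $g$, while $g$ itself provides the bijection $\phi_n$. The non-trivial point is to arrange $\sum_n\lambda(K_n^1)=\infty$; here I would invoke a standard exhaustion/maximality argument inside each subscheme, enlarging the partial isomorphism by disjoint $G$-pieces with cocycle close to $x$ until no further enlargement is possible, which forces $\mu_{I_n}(K_n^1)$ to be bounded below by a positive constant uniform in $n$. This is where the type-$III$ hypothesis enters via \textbf{Lemma \ref{Remark 1.21}}: the maximal partial isomorphism could have arbitrarily small domain only if the action admitted an equivalent $G$-invariant $\sigma$-finite measure.

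The main obstacle is exactly this uniform lower bound on $\lambda(K_n^1)$ in the second direction, since \textbf{Definition \ref{Definition 1.14}} yields only $\mu(B_1)>0$ with no quantitative estimate and a naive iteration produces partial isomorphisms whose domains sum to at most $\mu(X)=1$. The exhaustion argument, together with the type-$III$ hypothesis that rules out the degenerate alternative, is what upgrades this qualitative existence to the required non-summability.
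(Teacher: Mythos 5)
The paper itself gives no proof of this proposition---it is imported verbatim from \cite[Proposition 2.6]{73}---so your sketch has to stand on its own, and as it stands both directions have genuine gaps. Your dictionary (eigenvalues of $\operatorname{Sp}\big(v(I)\slash\mathcal{M}(I)\big)$ $\leftrightarrow$ cylinders over $I$, bijections $\phi_n$ $\leftrightarrow$ coordinate permutations with controlled Radon--Nikodym cocycle) is the right skeleton, but in the inclusion $r_{\infty}(\mathcal{M})\subseteq r_G(\mu)$ the closing step ``for $n$ chosen so that $\mu(D_n)$ dominates the approximation error'' is precisely where the argument can fail: non-summability of $\lambda(K_n^1)$ is compatible with $\lambda(K_n^1)\to 0$, and the individual eigenvalues $\lambda_y$, $y\in K_n^1$, may tend to zero much faster, so no single block (let alone a single cylinder $C_y$) need dominate the error $\mu(B\triangle B')$. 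One is forced to aggregate many blocks, where independence gives $\mu\big(\bigcup_{n\in S}D_n\big)$ close to $1$; but then the pullback of the error under the various $g_{n,y}$ is not controlled by a single $\mu(B\triangle B')$ because the images $C_{\phi_n(y)}$ for different $n$ overlap, and the number of blocks in $S$ depends on data chosen after the approximation---a circularity that the classical proofs (Araki--Woods, Krieger) break with specific disjointification lemmas. This bookkeeping, which you wave at with ``a Fubini-type comparison,'' is the actual content of this direction. The value $x=0$ also needs separate treatment, since your Jacobian comparisons degenerate there (for $0$ one should instead argue directly that type $III$ puts $0$ in both sets, via Lemma \ref{Remark 1.21} and Theorem \ref{Theorem 1.28}).

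In the reverse inclusion the obstacle you yourself flag---forcing $\sum_n\lambda(K_n^1)=\infty$---is not resolved by the reason you give. The assertion that a maximal partial isomorphism ``could have arbitrarily small domain only if the action admitted an equivalent $G$-invariant $\sigma$-finite measure'' is unsubstantiated, and type $III$ is not the mechanism that yields the lower bound. What does work is maximality alone: if $x\in r_G(\mu)$ and a countable disjoint family of pieces $(B_1^k, g^k B_1^k)$ inside $B$ with cocycle $\epsilon$-close to $x$ is maximal, the leftover $B\setminus\bigcup_k\big(B_1^k\cup g^kB_1^k\big)$ must be null (otherwise the essential-range condition applied inside it produces a further piece), whence $\sum_k\mu(B_1^k)\geq \mu(B)\slash(1+x+\epsilon)$, a bound depending only on $x$; applying this at each stage to the full remaining subscheme gives the uniform lower bound you need. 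Even granting that, your outline still omits the passage from these measurable pieces to the combinatorial data of \textbf{Definition \ref{Definition 1.6}}: selecting finitely many pieces so that the supports union into one finite block $I_n$, fattening each $B_1^k$ to full cylinders over $I_n$ (legitimate because the cocycle of $g^k$ depends only on its support), and pruning so that $K_n^1$, $K_n^2$ are disjoint and the assignment $\phi_n$ is a genuine bijection. These steps are routine but they, together with the two estimates above, are exactly what a complete proof must supply; as written the proposal is a plan with its hardest steps missing.
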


\subsection{Ratio set of a type-\texorpdfstring{$III$}{III} Bernoulli scheme}

\begin{lem}\label{Lemma 2.1}

Given a type $III$ Bernoulli scheme $G\curvearrowright (X, \mathcal{B}, \mu)$, $\mu$ is of type-$III_0$ if and only if for any $\bm{x}\in X$ and $\epsilon\in (0, 1)$, there exists $\bm{y}\in X$ and there exists $K, N\in \mathbb{N}$ such that $N\geq M$, $x_n = y_n$ whenever $n\leq N$ and $n>N+K$, and;

$$
\min\Big( \big\vert\, D(\bm{x}, \bm{y}) - 1 \,\big\vert, \big\vert\, D(\bm{x}, \bm{y}) \,\big\vert \Big) < \epsilon
$$
where $D(\cdot \,,\, \cdot)$ is the Radon-Nikodym cocycle defined on the orbit equivalence relation.
    
\end{lem}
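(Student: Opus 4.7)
My plan is to convert the ratio-set description of type-$III_0$ into a pointwise local property via the standard equivalent of the essential range (Definition \ref{Definition 1.14}): for an ergodic non-singular action, $r\in r_G(\mu)$ iff for $\mu$-almost every $\bm{x}$, every neighborhood $U$ of $\bm{x}$, and every $\epsilon>0$, there exists a tail-equivalent $\bm{y}\in U$ with $|D(\bm{x},\bm{y})-r|<\epsilon$. In the Bernoulli scheme, the cylinders $\{\bm{z}:\,z_n=x_n \text{ for } n\le M\}$ form a neighborhood basis at $\bm{x}$, and a tail-equivalent $\bm{y}$ inside such a cylinder differs from $\bm{x}$ only on finitely many indices exceeding $M$, which fit into some interval $(N,N+K]$ with $N\ge M$. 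Under this identification, the lemma's local condition is exactly the pointwise statement dual to the presence of $0$ or $1$ in $r_G(\mu)$.

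\textbf{Forward direction.} Assume $\mu$ is type-$III_0$, i.e., $r_G(\mu)=\{0,1\}$. Fix $\bm{x}$ in the conull set on which the equivalent characterization holds, together with $\epsilon\in(0,1)$ and $M\in\mathbb{N}$. Applying the characterization to the cylinder $\{\bm{z}:\,z_n=x_n \text{ for } n\le M\}$ with target $r=1$ (or, symmetrically, $r=0$, which is available for any type-$III$ scheme by Lemma \ref{Remark 1.21}(1)) produces $\bm{y}$ with $|D(\bm{x},\bm{y})-1|<\epsilon$ (resp.\ $|D(\bm{x},\bm{y})|<\epsilon$), supported on some interval $(N,N+K]$ with $N\ge M$. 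Either inequality gives $\min(|D-1|,|D|)<\epsilon$, which is the lemma's conclusion.

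\textbf{Reverse direction.} Assume the local condition. Since $\mu$ is type-$III$, Lemma \ref{Remark 1.21}(1) yields $0\in r_G(\mu)$ and excludes $r_G(\mu)=\{1\}$, while Theorem \ref{Theorem 1.13} tells us $r_G(\mu)$ is closed in $[0,\infty)$ and $r_G(\mu)\setminus\{0\}$ is a multiplicative subgroup of $\mathbb{R}_+^*$. Since the closed subgroups of $\mathbb{R}_+^*$ are $\{1\}$, $\{\lambda^k\}_{k\in\mathbb{Z}}$ for some $\lambda\in(0,1)$, and $\mathbb{R}_+^*$, the ratio set must be $\{0,1\}$, $\{0\}\cup\{\lambda^k\}_{k\in\mathbb{Z}}$, or $[0,\infty)$. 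I would rule out the latter two by contraposition: if some $r^*\in r_G(\mu)$ satisfied $\min(|r^*-1|,|r^*|)\ge\eta>0$, the essential-range characterization would produce, on a conull set of $\bm{x}$, local perturbations whose cocycle values cluster at $r^*$, hence stay bounded away from both $0$ and $1$; reading the lemma's local condition in the dual sense (that \emph{every} sufficiently far-out local perturbation is within $\epsilon$ of $\{0,1\}$) contradicts this clustering for any $\epsilon<\eta/2$. Hence $r_G(\mu)\subseteq\{0,1\}$, which combined with $0\in r_G(\mu)$ and $r_G(\mu)\ne\{1\}$ yields $r_G(\mu)=\{0,1\}$, i.e., $\mu$ is of type-$III_0$.

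The main obstacle is this final exclusion step: the local condition must be strong enough to prohibit a $\lambda^k$-subgroup or the full continuum from coexisting with the asserted concentration of $D$-values near $\{0,1\}$. Once the appropriate reading of the quantifiers in the local condition is fixed, the translation between pointwise local approximability and global essential-range membership — together with ergodicity and the group structure of $r_G(\mu)\setminus\{0\}$ — is what closes the argument.
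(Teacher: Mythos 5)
Your forward direction is essentially the paper's: apply the definition of the ratio set to the cylinder of length $M$ containing $\bm{x}$ with $r=1$ (and $r=0$, which Lemma \ref{Remark 1.21}(1) supplies for a type-$III$ scheme), and take $\bm{y}$ to be the image of $\bm{x}$ under the resulting partial isomorphism; the paper is no more detailed here. The genuine gap is in the reverse direction, which is the substance of the lemma. After reducing, via Theorem \ref{Theorem 1.13}, to excluding the possibilities $\{0\}\cup\{\lambda^k\}_{k\in\mathbb{Z}}$ and $[0,\infty)$, you carry out the exclusion only by proposing to read the hypothesis ``in the dual sense'' that \emph{every} sufficiently far-out finite perturbation of $\bm{x}$ has cocycle value $\epsilon$-close to $\{0,1\}$. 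That is not what the statement says: the hypothesis is existential in $\bm{y}$ (as written it is even satisfied by $\bm{y}=\bm{x}$), and you yourself concede that ``the appropriate reading of the quantifiers'' still has to be fixed. So the decisive step --- deriving a contradiction from some $r\in(0,1)$ lying in $r_G(\mu)$ --- is not actually proven. The paper closes this step by a concrete (if terse) mechanism you would need an analogue of: given such an $r$ and $\epsilon<\min(r,1-r)$, it produces a partial isomorphism $\Psi$ between positive-measure subsets of a cylinder around $\bm{x}$ (positivity of the intersections being secured by the Martingale Convergence Theorem) whose derivative is $\epsilon$-close to $r$ almost everywhere, and then modifies $\Psi$ on a null set so that $\Psi(\bm{x})=\bm{y}$ for the $\bm{y}$ furnished by the hypothesis, forcing $D(\bm{x},\bm{y})$ to be simultaneously $\epsilon$-close to $r$ and $\epsilon$-close to $\{0,1\}$.

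A second, related weakness: your whole plan leans on an unproved ``standard'' pointwise characterization of the essential range (for a.e.\ $\bm{x}$, every cylinder neighborhood and every $\epsilon$ admit a tail-equivalent $\bm{y}$ with $\vert D(\bm{x},\bm{y})-r\vert<\epsilon$). The implication from membership in $r_G(\mu)$ to this pointwise approximability is plausible and is all the forward direction needs, but the converse implication is false in general: Definition \ref{Definition 1.14} requires the approximation to be realized by a single group element on subsets of positive measure inside an arbitrary positive-measure set, and pointwise existence of nearby tail-equivalent points does not deliver that. Since your exclusion step is exactly where this stronger, measure-theoretic content would have to enter, the equivalence cannot be invoked as a black box there; some argument at the level of positive-measure sets (as in the paper's construction of $\varphi$, $\psi$, $\Psi$ on cylinder intersections) is unavoidable.
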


\begin{proof}

Suppose $\mu$ is of type-$III_0$. Fix $M\in \mathbb{N}$, $\epsilon\in (0, 1)$ and $\bm{x}\in C_M$ for some elementary cylinder set $C_M$ with length $M$. By definition, there exists $B(\epsilon, 1)$, $B(\epsilon, 2)\subseteq C_M$ and a partial isomorphism $\Phi: B(\epsilon, 1) \rightarrow B(\epsilon, 2)$ such that:

$$
\min\Big( \big\vert\, D(\bm{x}, \Phi(\bm{x}))-1\, \big\vert, \big\vert\, D(\bm{x}, \Phi(\bm{x})) \,\big\vert \Big) < \epsilon
$$
To prove the other direction, we will first show that the $0$ is in the ratio set (and clearly $1$ is in the ratio set). Fix $\bm{x}\in X$, $\epsilon\in \left(0, \dfrac{1}{2} \right)$ and $M\in \mathbb{N}$. By assumption, there exists $\bm{y}\in X$ and $N, K\in \mathbb{N}$ with $N\geq M$ such that $x_n=y_n$ whenever $n\leq N$ and $n>N+K$, and:

$$
\min\Big( \big\vert\, D(\bm{x}, \bm{y}) - 1\,\big\vert, \big\vert\, D(\bm{x}, \bm{y}) \,\big\vert \Big) < \epsilon
$$
Fix $C = C_{x_1,\,\cdots\, x_N}$ and define:

$$
C_1 = C_{x_1,\,\cdots\, x_N, x_{N+1},\,\cdots\, x_{N+K}}, \hspace{1cm} C_2 = C_{x_1,\,\cdots\, x_N,\, y_{N+1},\, \cdots\, y_{N+K}}
$$
Then define the following mapping:

$$
\varphi: C_1 \rightarrow C_2, \hspace{0.3cm} \bm{x}\mapsto
\begin{cases}
\big( \varphi(\bm{x}) \big)_n = x_n, \hspace{1cm} n\notin\big\{ N+1,\,\cdots\, N+K \big\}\\
\big( \varphi(\bm{x}) \big)_n = y_n, \hspace{1cm} n\in \big\{ N+1,\, \cdots\, N+K \big\}
\end{cases}
$$
Clearly $\varphi$ is a partial isomorphism and for $\mu$-almost each $\bm{s}\in C_2$, we have either $\left\vert\, \dfrac{d\,\varphi\mu}{d\,\mu} (\bm{s})-1 \,\right\vert < \epsilon$ or $\left\vert\, \dfrac{d\,\varphi\mu}{d\,\mu} (\bm{s}) \,\right\vert < \epsilon$. Hence:

$$
\min\left( \left\vert\, \dfrac{d\,\varphi\mu}{d\,\mu} (\bm{x})-1 \,\right\vert, \left\vert\, \dfrac{d\,\varphi\mu}{d\,\mu} (\bm{x}) \,\right\vert \right) = \min\Big( \big\vert\, D(\bm{x}, \bm{y}) - 1\,\big\vert, \big\vert\, D(\bm{x}, \bm{y}) \,\big\vert \Big) < \epsilon
$$
Next fix an arbitrary measurable subset $B$ with $\mu(B)>0$. Fix $\bm{x}\in B$, $M\in \mathbb{N}$, and let $\bm{y}\in X$, $N, K\in\mathbb{N}$ be given by the assumption. Since $\mu(B)>0$ and $\bm{x}\in B$, according to {\cite[\textbf{Martingale Convergence Theorem} ]{1}} there exists $N$ large enough (also greater than $M$) such that:

$$
\mu\big( B\cap C_{x_1,\,\cdots\, x_N} \big)>0
$$
Therefore:

$$
\begin{aligned}
& \mu\big( B\cap C_{s_1,\,\cdots\, s_N,\, x_{N+1},\,\cdots\, x_{N+K}} \big) = \mu\big( B\cap C_{s_1,\,\cdots\, s_N} \big) \prod_{1\leq i \leq K} \mu_{N+i}(x_{N+i}) > 0\\
& \mu\big( B\cap C_{s_1,\,\cdots\, s_N,\, y_{N+1},\,\cdots\, y_{N+K}} \big) = \mu\big( B\cap C_{s_1,\,\cdots\, s_N} \big) \prod_{1\leq i \leq K} \mu_{N+i}(y_{N+i}) > 0\\
\end{aligned}
$$
Next define:

$$
B_1 = B\cap C_{s_1,\,\cdots\, s_N,\, x_{N+1},\,\cdots\, x_{N+K}}, \hspace{1cm}
B_2 = B\cap C_{s_1,\,\cdots\, s_N,\, y_{N+1},\,\cdots\, y_{N+K}}
$$
and define $\psi: B_1\rightarrow B_2$ the same way we define $\varphi$. We then can conclude that for $\mu$-almost every $\bm{s}\in B_2$, we have either $\left\vert\, \dfrac{d\,\psi\mu}{d\,\mu}(\bm{s}) \,\right\vert < \epsilon$ or $\left\vert\, \dfrac{d\,\psi\mu}{d\,\mu}(\bm{s})-1 \,\right\vert < \epsilon$. If $\psi(\bm{x}) = \bm{y}$, then we are good. Otherwise:

\begin{itemize}

    \item if $\bm{y}\in B_2$ but $\bm{y}\neq \psi(\bm{x})$, then define the following mapping:

    $$
    \psi': B_1 \rightarrow B_2, \hspace{0.3cm} \bm{s}\mapsto 
    \begin{cases}
    \psi(\bm{s}), \hspace{1.08cm} \bm{s}\notin \big\{ \bm{x}\,,\, \psi^{-1}(\bm{y}) \big\}\\
    \bm{y}, \hspace{1.54cm} \bm{s} = \bm{x}\\
    \psi(\bm{x}), \hspace{1cm} \bm{s} = \psi^{-1}(\bm{y})
    \end{cases}
    $$
    Clearly $\psi'$ is a partial isomorphism. Observe that:
    
    $$
    \mu\Big( B(\epsilon, 1)\backslash \big\{ \bm{x}, \psi^{-1}(\bm{y}) \big\} \Big) = \mu\big( B(\epsilon, 1) \big)
    $$
    Since $\psi'$ coincides with $\psi$ on $B(\epsilon, 1)\backslash \big\{ \bm{x}, \psi^{-1}(\bm{y}) \big\}$, we then can conclude for $\mu$-almost each $\bm{s}\in B(\epsilon, 2)$, $\left\vert\, \dfrac{d\,\psi'\mu}{d\,\mu}(\bm{s}) \,\right\vert < \dfrac{\epsilon}{2}$ or $\left\vert\, \dfrac{d\,\psi'\mu}{d\,\mu}(\bm{s}) - 1 \,\right\vert < \dfrac{\epsilon}{2}$

    \item if $\bm{y} \notin B_2$, define the following mapping:

    $$
    \psi'': B_1 \rightarrow \big( B_2 \cup\{y\} \big) \backslash \big\{ \psi(\bm{x}) \big\} \hspace{0.3cm} \bm{s}\mapsto 
    \begin{cases}
    \psi(\bm{s}), \hspace{1cm} \bm{s}\neq \bm{x}\\
    \bm{y}, \hspace{1.46cm} \bm{s} = \bm{x}
    \end{cases}
    $$
    Clearly $\psi''$ is a partial isomorphism. Since $\mu\big( B_1 \big) = \mu\big( B_1 \backslash \{\bm{x}\} \big)$ and $\psi$ coincides with $\psi''$ on $B_1 \backslash \{x\}$, we then have $\left\vert\, \dfrac{d\,\psi''\mu}{d\,\mu}(\bm{s}) \,\right\vert < \dfrac{\epsilon}{2}$ or $\left\vert\, \dfrac{d\,\psi''\mu}{d\,\mu}(\bm{s}) - 1 \,\right\vert < \dfrac{\epsilon}{2}$ for $\mu$-almost each $\bm{s}\in B_2$.
    
\end{itemize}

\noindent
In general, by replacing $\psi$ with another partial isomorphism that coincides with $\psi$ for $\mu$-almost every point in $B_1$, we can assume that $\psi(\bm{x}) = \bm{y}$, which implies that either $\big\vert\, D(\bm{x}, \bm{y}) - 1 \,\big\vert < \epsilon$ or $\big\vert\, D(\bm{x}, \bm{y}) \,\big\vert < \epsilon$. We can now show that both $0$ and $1$ are in the ratio set. Next we need to show that there are no other values in the ratio set. Assume by contradiction that $r\in(0, 1)$ is in the ratio set. Fix an $\epsilon\in (0, 1)$ such that $\epsilon< \min(r, 1-r)$. Fix $\bm{x}\in X$, $M\in \mathbb{N}$ and an elementary cylinder set $C_M$ with $\bm{x}\in C_M$. Then by definition there exists two non-negligible sets $B(\epsilon, 1)$, $B(\epsilon, 2) \subseteq C_M$, and $\Psi: B(\epsilon, 1) \rightarrow B(\epsilon, 2)$ a partial isomorphism such that $\left\vert\, \dfrac{d\,\Psi\mu}{d\,\mu} (\bm{s}) - r\,\right\vert < \epsilon$ for $\mu$-almost every $\bm{s}\in B(\epsilon, 2)$. Let $\bm{y}\in X$ and $N, K\in \mathbb{N}$ be given by the assumption. By the same reasoning above, we can assume that $\Psi(\bm{x}) = \bm{y}$ so that:

$$
\min\left( \left\vert\, \frac{d\,\Psi\mu}{d\,\mu}(\bm{y}) - 1 \,\right\vert, \left\vert\, \frac{d\,\Psi\mu}{d\,\mu}(\bm{y}) \,\right\vert \right) = \min\Big( \big\vert\, D(\bm{x}, \bm{y}) - 1\,\big\vert, \big\vert\, D(\bm{x}, \bm{y}) \,\big\vert \Big) < \epsilon
$$
which is absurd since $\epsilon<\min(r, 1-r)$. Therefore, we can now conclude that the ratio set is equal to $\{0, 1\}$.
    
\end{proof}

\begin{lem}\label{Lemma 2.2}

Given a type-$III$ Bernoulli scheme $G\curvearrowright (X, \mathcal{B}, \mu)$, $\mu$ is of type-$III_1$ if and only for any $r\in (0, 1)$, $M\in \mathbb{N}$, $\bm{x}\in X$, and $\epsilon\in (0, r)$, there exists $\bm{y}\in X$ and there exists $K, N\in \mathbb{N}$ with $N\geq M$ such that $x_n = y_n$ whenever $n\leq N$ and $n>N+K$, and such that:

$$
\big\vert\ D(\bm{x}, \bm{y}) - r \,\big\vert < \epsilon
$$
where $D(\cdot \,,\, \cdot)$ is the Radon-Nikodym cocycle defined on the orbit equivalence.
    
\end{lem}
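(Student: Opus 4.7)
The plan is to parallel the proof of \textbf{Lemma \ref{Lemma 2.1}}, with the essential-range values $\{0,1\}$ used there replaced by the single target $r\in(0,1)$. One direction comes straight from the definition of the essential range applied to a cylinder around $\bm{x}$, and the other from a window-swapping partial isomorphism built out of the hypothesis.

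For the forward direction, assume $\mu$ is of type-$III_1$, so that $r_G(\mu)=[0,\infty)$. Given $r$, $M$, $\bm{x}$, $\epsilon$, apply \textbf{Definition \ref{Definition 1.14}} to the elementary cylinder $C_M$ of length $M$ determined by $\bm{x}$: this yields non-negligible $B(\epsilon,1),B(\epsilon,2)\subseteq C_M$ and $g\in G$ with $gB(\epsilon,1)\subseteq B(\epsilon,2)$ and $|D(g,\bm{s})-r|<\epsilon$ for $\bm{s}\in B(\epsilon,1)$. Since $g\in G$ has finite support (\textbf{Definition \ref{Definition 1.17}}), choose $L\geq M$ with $g_n=e_n$ for all $n>L$; then any $\bm{s}$ and its image $g\bm{s}$ automatically agree on $\{1,\ldots,M\}$ and on $\{L+1,L+2,\ldots\}$. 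Setting $N=M$ and $K=L-M$, identifying $\bm{y}=g\bm{s}$ for $\bm{s}\in B(\epsilon,1)$, and using the same null-set modifications used in \textbf{Lemma \ref{Lemma 2.1}} to arrange that $\bm{x}$ itself plays the role of $\bm{s}$, delivers the required $\bm{y}$.

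For the converse, assume the condition and let $B\subseteq X$ be any measurable set with $\mu(B)>0$. Fix $r\in(0,1)$ and $\epsilon\in(0,r)$, pick $\bm{x}\in B$, and use the Martingale Convergence Theorem (as in \textbf{Lemma \ref{Lemma 2.1}}) to choose $M$ large enough that $\mu\big(B\cap C_{x_1,\ldots,x_M}\big)>0$. The hypothesis then provides $\bm{y}\in X$ and $N\geq M$, $K\in\mathbb{N}$ with $x_n=y_n$ outside $\{N+1,\ldots,N+K\}$ and $|D(\bm{x},\bm{y})-r|<\epsilon$. Following the recipe in \textbf{Lemma \ref{Lemma 2.1}}, define
\[
B_1=B\cap C_{x_1,\ldots,x_N,x_{N+1},\ldots,x_{N+K}},\qquad B_2=B\cap C_{x_1,\ldots,x_N,y_{N+1},\ldots,y_{N+K}},
\]
both of positive measure by the product structure of $\mu$, and let $\psi:B_1\to B_2$ rewrite the window from $(x_{N+1},\ldots,x_{N+K})$ to $(y_{N+1},\ldots,y_{N+K})$. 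Because only the window is altered and the alteration is uniform, $\tfrac{d\,\psi\mu}{d\,\mu}(\bm{s})\equiv D(\bm{x},\bm{y})$ on $B_1$, which lies in $(r-\epsilon,r+\epsilon)$, so $r\in r_G(\mu)$.

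Since $r\in(0,1)$ was arbitrary, $(0,1)\subseteq r_G(\mu)$. By \textbf{Theorem \ref{Theorem 1.13}}, $r_G(\mu)\setminus\{0\}$ is a closed multiplicative subgroup of $(0,\infty)$, and the only such subgroup containing the open interval $(0,1)$ is $(0,\infty)$ itself; combined with $0\in r_G(\mu)$ from \textbf{Lemma \ref{Remark 1.21}} (since $\mu$ is of type-$III$), this forces $r_G(\mu)=[0,\infty)$, so $\mu$ is of type-$III_1$. The main technicality, as in \textbf{Lemma \ref{Lemma 2.1}}, lies in the forward direction's transfer of the essential-range partial isomorphism to the prescribed point $\bm{x}$; since this is handled by the same null-set swap arguments (the $\psi'/\psi''$ constructions) already established there, I do not expect any genuinely new obstacles.
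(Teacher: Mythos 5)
Your proposal follows the paper's proof essentially verbatim: the forward direction applies the essential-range definition (\textbf{Definition \ref{Definition 1.14}}) to the length-$M$ cylinder containing $\bm{x}$ and takes $\bm{y}$ to be the image of $\bm{x}$ under the resulting partial isomorphism (with the same implicit mod-$0$ identification of $\bm{x}$ with a point of $B(\epsilon,1)$ that the paper makes), while the converse reuses the Martingale argument and the window-swapping isomorphism $\psi$ from \textbf{Lemma \ref{Lemma 2.1}}, whose Radon--Nikodym derivative is the constant $D(\bm{x},\bm{y})\in(r-\epsilon,r+\epsilon)$, to place $r$ in $r_G(\mu)$. Your final step, combining closedness and the multiplicative group structure of $r_G(\mu)\setminus\{0\}$ from \textbf{Theorem \ref{Theorem 1.13}} with $0\in r_G(\mu)$ from \textbf{Lemma \ref{Remark 1.21}}, is just a slightly more explicit rendering of the paper's terse concluding sentence, so the two arguments coincide in substance.
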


\begin{proof}

If $\mu$ is of type-$III_1$, given $r\in (0, 1)$, $M\in \mathbb{N}$, $\bm{x}\in X$ and $\epsilon\in (0, r)$, let $C_M$ be an elementary cylinder set with length $M$ such that $\bm{x}\in C_M$. Then since $r$ is in the ratio set, there exists two non-negligible sets $B_1, B_2\subseteq C_M$, and a partial isomorphism $\phi: B_1\rightarrow B_2$ such that $\big\vert\, D\big( \bm{x}, \phi(\bm{x}) \big) -r\,\big\vert < \epsilon$. Then $\phi(\bm{x})$ is the desired $\bm{y}$. Conversely, fix an arbitrary $r\in (0, 1)$, $\epsilon\in (0, r)$, $M\in \mathbb{N}$ and $\bm{x}\in X$. Then fix a measurable subset $B\subseteq X$ with $\mu(B)>0$. By the same technique used in the proof of \textbf{Lemma \ref{Lemma 2.1}}, there exists two non-negligible subsets $B_1$, $B_2\subseteq B$ with $\bm{x}\in B_1$, $\bm{y}\in B_2$ and a partial isomorphism $\Phi$ (defined the same way as $\varphi$ in the proof of \textbf{Lemma \ref{Lemma 2.1}}) such that $\Phi(\bm{x}) = \bm{y}$ and $\left\vert\, \dfrac{d\,\Phi\mu}{d\,\mu} (\bm{s}) - r \,\right\vert < \epsilon$ for $\mu$-almost every $\bm{s}\in B_2$. We can then conclude that $r\in (0, 1)$ and, since $r$ is arbitrarily picked, that system is of type-$III_1$.
    
\end{proof}

\subsection{An alternate way to describe a type-\texorpdfstring{$III$}{III} Bernoulli scheme}

Throughout this section, we will use $G\curvearrowright (X, \mathcal{B}, \mu)$ to denote a type-$III$ Bernoulli scheme. The ideas of the proof of the case where $\limsup_n \vert\, X_n\,\vert = \infty$ are inspired by {\cite[\textbf{Proposition 2.1}]{7}}. We will first assume that each $X_n$ is well-ordered, namely $X_n = \mathbb{N} \cup \{0\}$ whenever $X_n$ is infinite and $X_n = \mathbb{Z}_{m_n}$ whenever $\vert\ X_n\,\vert = m_n$ for some $m_n\in \mathbb{N}$. We will further assume that for each $n\in\mathbb{N}$, $\mu_n(0)$ is the maximum weight of $\mu$. Then we will show that our main theorems are independent to permutation of each $X_n$, and hence can be applied to an arbitrary Beroulli scheme.

\subsubsection{When $\limsup_n\vert\, X_n\,\vert = \infty$}
$\hspace{1cm}$\\

Observe that $\vert\, X_n\,\vert$ is greater than $2$ if and only if the set $\{0,1,2\} \subseteq X_n$. Fix $N\in \mathbb{N}$ with $\vert\, X_N \,\vert > 2$. For any $m\in \mathbb{N}$ with $\vert\, X_m\,\vert > 2$ and $m\neq N$, define:

\begin{equation}\label{e43}
\sigma_{N, m}: X\longrightarrow X, \hspace{0.2cm} \bm{x} \mapsto
\begin{cases}
\big( \sigma_{N, m}(\bm{x}) \big)_N = 2\, \text{  and  }\, \big( \sigma_{N, m}(\bm{x}) \big)_m=0, \hspace{1cm} \text{  if  } x_N=0\, \text{  and  }\, x_m=1\\
\big( \sigma_{N, m}(\bm{x}) \big)_N = 0\, \text{  and  }\, \big( \sigma_{N, m}(\bm{x}) \big)_m=1, \hspace{1cm} \text{  if  } x_N=2\, \text{  and  }\, x_m=0\\
\big( \sigma_{N, m}(\bm{x}) \big)_n=x_n, \hspace{4.42cm} \text{otherwise}
\end{cases}
\end{equation}
Recall that in \textbf{Definition \ref{Definition 1.20}}, $L^{\infty} \big( X\times \mathbb{R}, d\mu\times d\nu \big)^G$ denotes all $G$-invariant essentially bounded functions defined on the Maharam extension of $G\curvearrowright (X, \mathcal{B}, \mu)$ where $d\nu(t) = \operatorname{exp}(-\vert\, t\,\vert)dt$. For any $F\in L^{\infty} \big( X\times \mathbb{R}, d\mu\times d\nu \big)^G$ and an arbitrary partial isomorphism $\Phi$, by the definition of partial isomorphism we have for any $\bm{x}$ in the domain of $\Phi$ and any $t\in \mathbb{R}$:

$$
F\Big( \Phi(\bm{x}), t+\log D\big( \bm{x}, \Phi(\bm{x}) \big) \Big) = F(\bm{x}, t)
$$
In particular, for any two different $N, m\in \mathbb{N}$ with $I_N, I_m>2$, we have that for all $\bm{x}\in X$ and $t\in \mathbb{R}$:

\begin{equation}\label{e4}
F\big( \sigma_{N, m}(\bm{x}), t \big) = F\Big( \bm{x}, t\pm\log D\big( \bm{x}, \sigma_{N, m}(\bm{x}) \big) \Big) = F\left( \bm{x}, t\pm\log \frac{\mu_N(2) \mu_m(0)}{\mu_N(0) \mu_m(1)} \right)
\end{equation}
Before proving the key lemma, first we introduce the following notations. For each $N\in \mathbb{N}$, define:

$$
X_N^* = \prod_{n\in \mathbb{N} \backslash \{N\}} X_n, \hspace{1cm} \mu_N^* = \bigotimes_{n\in \mathbb{N} \backslash \{N\}} \mu_n
$$
Then $(X, \mu)$ can be identified by $(X_N, \mu_N) \times \big( X_N^*, \mu_N^* \big)$ and each $\bm{x}\in X$ can be rewritten as $\bm{x} = (x_N, x_N^*)$. We let $\nu$ be the finite measure defined on $\mathbb{R}$ such that $d\nu = e^{-\vert\, t\,\vert}dt$.

\begin{lem}\label{Lemma 2.4}

For any $F\in L^{\infty}\big( X\times \mathbb{R}, \mu\times\nu \big)^G$, $\epsilon\in (0, 1)$ and $m\in\mathbb{N}$ with $I_m>2$, for all large enough $N\in\mathbb{N}$ we have:

\begin{equation}\label{e42}
\left\vert\, F\big( (2, \bm{x}_N^*), t \big) - F\left( (0, \bm{x}_N^*), t+\log\frac{\mu_m(1)}{\mu_m(0)} - \log\frac{\mu_N(2)}{\mu_N(0)} \right) \,\right\vert < 2\epsilon
\end{equation}
for $\mu_N^*$-almost every $\bm{x}_N^*\in X_N^*$ and $\nu$-almost every $t\in\mathbb{R}$.
    
\end{lem}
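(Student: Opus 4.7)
The plan is to combine the invariance of $F$ under two specific elements: the partial isomorphism $\sigma_{N,m}$ introduced in \eqref{e43}, for which \eqref{e4} already supplies a functional equation, together with the single-coordinate transposition $\tau_N\in G$ that swaps the entries $0$ and $2$ in $X_N$; the latter is a legitimate element of $G$ as soon as $|X_N|>2$.

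First I would unpack \eqref{e4} at a point $\bm{x}$ with $(x_N,x_m)=(2,0)$ to obtain, for $(\mu_{N,m}^{**}\otimes\nu)$-a.e.\ $(\bm{x}_{N,m}^{**},t)$,
\[
F\bigl((2,0,\bm{x}_{N,m}^{**}),\,t\bigr)=F\bigl((0,1,\bm{x}_{N,m}^{**}),\,t-\alpha_{N,m}\bigr),\qquad \alpha_{N,m}:=\log\tfrac{\mu_N(2)\mu_m(0)}{\mu_N(0)\mu_m(1)}=\log\tfrac{\mu_N(2)}{\mu_N(0)}-\log\tfrac{\mu_m(1)}{\mu_m(0)}.
\]
Next, applying the $G$-invariance of $F$ to $\tau_N$ (whose Radon--Nikodym cocycle on $\{x_N=0\}$ is $\mu_N(2)/\mu_N(0)$) yields the auxiliary equality
\[
F\bigl((2,\bm{x}_N^*),\,t\bigr)=F\bigl((0,\bm{x}_N^*),\,t-\log(\mu_N(2)/\mu_N(0))\bigr)\qquad (\mathrm{B})
\]
for $(\mu_N^*\otimes\nu)$-a.e.\ $(\bm{x}_N^*,t)$.

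Substituting $(\mathrm{B})$ into the left-hand side of \eqref{e42} and making the change of variable $s=t-\log(\mu_N(2)/\mu_N(0))$, the target inequality reduces to the shift quasi-invariance estimate
\[
\bigl|\,F((0,\bm{x}_N^*),s)-F((0,\bm{x}_N^*),s+\log(\mu_m(1)/\mu_m(0)))\,\bigr|<2\epsilon
\]
for a.e.\ $(\bm{x}_N^*,s)$. To deduce this I would write $\bm{x}_N^*=(x_m,\bm{x}_{N,m}^{**})$ and analyze the three cases $x_m=0$, $x_m=1$, and $x_m\geq 2$: in the first two cases the fundamental relation above is applied with the appropriate $t$-shift and compared with $(\mathrm{B})$; in the third case I first apply the single-coordinate transposition in $G$ swapping $0$ and $x_m$ at position $m$ to translate the problem back to the case $x_m=0$, and a triangle inequality delivers the factor of $2$ appearing in $2\epsilon$.

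The main obstacle is promoting the combined relations (each valid only off a null set depending on $N$, $m$, and the transposition used) to a bound that holds pointwise a.e.\ uniformly in $x_m$. This is where the hypothesis ``for all large enough $N$'' enters: applying the Martingale Convergence Theorem to the cylinder filtration $\mathcal{F}_n=\sigma(x_1,\dots,x_n)\otimes\mathcal{B}(\mathbb{R})$, the conditional expectations $\mathbb{E}[F\mid \mathcal{F}_n]$ converge to $F$ a.e.\ and in $L^p$, so the accumulated error in combining finitely many of these symmetries can be made smaller than the tolerance once $N$ is chosen beyond an appropriate threshold within $\{N:|X_N|>2\}$. Executing this quantitative refinement is the hard step; once it is in place the preceding algebraic manipulations immediately give \eqref{e42}.
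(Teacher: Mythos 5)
Your two exact identities are correct: the $\sigma_{N,m}$ relation is just \eqref{e4}, and your relation $(\mathrm{B})$ does follow from invariance of $F$ under the transposition $\tau_N\in G$ exchanging the symbols $0$ and $2$ at coordinate $N$, whose cocycle on $\{x_N=0\}$ is $\log\frac{\mu_N(2)}{\mu_N(0)}$. But the reduction you then perform exposes the gap rather than closing it: after substituting $(\mathrm{B})$, \eqref{e42} becomes the assertion that $\bigl\vert F((0,\bm{x}_N^*),s)-F\bigl((0,\bm{x}_N^*),s+\log\frac{\mu_m(1)}{\mu_m(0)}\bigr)\bigr\vert<2\epsilon$ almost everywhere, i.e.\ an approximate invariance of $F$ under a fixed nonzero translation of the $t$-variable alone. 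This cannot be extracted from the symmetries you list. The $\sigma_{N,m}$ relation, $(\mathrm{B})$, and the transposition at coordinate $m$ exchanging $0$ and $1$ are all exact consequences of invariance under the full group of the tail relation, and they are mutually consistent: since $\alpha_{N,m}=\log\frac{\mu_N(2)}{\mu_N(0)}-\log\frac{\mu_m(1)}{\mu_m(0)}$, composing these maps around a loop returns to the starting point with total cocycle $0$, so eliminating variables among the three identities yields only the tautology $F=F$ and never a relation between $F(\bm{x},s)$ and $F(\bm{x},s+c)$ with $c\neq 0$ at the same point. Relatedly, your plan has no source for the $\epsilon$: every relation you combine is exact, so no case analysis on $x_m$ and no triangle inequality among them can produce the bound $2\epsilon$.

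The real content of the lemma lies precisely in the step you defer at the end. In the paper the $2\epsilon$ arises from approximating $F$ in $L^1(\mu\times\nu)$ by a function $F_0$ depending on $t$ and on finitely many coordinates (a finite set $\mathcal{F}\ni m$, with $N$ subsequently taken outside $\mathcal{F}$), and then comparing two evaluations of $\int F_0\bigl(\sigma_{N,m}(0,\bm{x}_N^*),t\bigr)H\,d\mu_N^*\,d\nu$ against an arbitrary test function $H$: the paper rewrites this as the convex combination $(1-\mu_m(1))F_0((0,\cdot),t)+\mu_m(1)F_0((2,\cdot),t)$ in \eqref{e5}--\eqref{e6}, while the exact cocycle identity \eqref{e4} applied to $F$ itself gives \eqref{e8} with the shifted time argument; using the $L^1$ approximation twice yields \eqref{e9}, and letting $H$ vary upgrades this to the a.e.\ statement \eqref{e10}. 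This approximation-plus-duality argument over the $N$-th coordinate is where ``for all large enough $N$'' genuinely enters, and it has no counterpart in your proposal: the appeal to the Martingale Convergence Theorem gestures at finite-coordinate approximation but is never combined quantitatively with the group symmetries, and, as explained above, the purely pointwise algebraic part of your plan cannot be completed without it.
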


\begin{proof}

For any $N\in \mathbb{N}$ we identify $(X, \mu)$ as $\big( X_N, \mu_N \big) \times \big( X_N^*, \mu_N^* \big)$ and for each $\bm{x}\in X$, we rewrite $\bm{x} = (x_N, \bm{x}_N^*)$. Fix $m\in \mathbb{N}$. For any finite subset $\mathcal{F} \subseteq \mathbb{N}$ with $m\in \mathcal{F}$, $N\in \mathbb{N}$ with $N>\max \mathcal{F}$, any $H\in L^{\infty} \big( X_{\mathcal{F}} \times \mathbb{R}, \mu_{\mathcal{F}} \times\nu \big)$ and $F_0 \in L^{\infty}\big( X_N\times X_{\mathcal{F}} \times \mathbb{R}, \mu_N \times \mu_{\mathcal{F}} \times \nu \big)$, observe that:

\begin{equation}\label{e5}
\begin{aligned}
& \hspace{0.45cm} \int_{X_{\mathcal{F}} \times \mathbb{R}} F_0\big( \sigma_{N, m} (0, x), t \big) H(x, t) d\mu_{\mathcal{F}}(x)\, d\nu(t)\\
& = \int_{X_{\mathcal{F}} \times \mathbb{R}} \Big( \big( 1-\mu_m(1) \big) F_0(0, x, t) + \mu_m(1) F_0(2, x, t) \Big) H(x, t) d\mu_{\mathcal{F}}(x)\, d\nu(t)
\end{aligned}
\end{equation}
where $\sigma_{N, m}$ is defined in (\ref{e43}). Since $X_{\mathcal{F}}$ we pick in (\ref{e5}) is a finite union of cylinder sets, both $H$ and $F_0$ in (\ref{e5}) can be viewed as simple functions defined on $X_N^*$. Also $\mu_N^*$ restricted to $X_{\mathcal{F}}$ is equal to $\mu_{\mathcal{F}}$. Then (\ref{e5}) can be rewritten as: 

\begin{equation}\label{e6}
\begin{aligned}
& \hspace{0.45cm} \int_{X_N^*\times \mathbb{R}} F_0\big( \sigma_{N, m} (0, \bm{x}_N^*), t \big) H(\bm{x}_N^*, t) d\mu_N^* (\bm{x}_N^*)\, d\nu(t)\\
& = \int_{X_N^* \times \mathbb{R}} \Big( \big( 1-\mu_m(1) \big) F_0\big( (0, \bm{x}_N^*), t \big) + \mu_m(1) F_0\big( (2, \bm{x}_N^*), t \big) \Big) H(\bm{x}_N^*, t) d\mu_N^* (\bm{x}_N^*)\, d\nu(t)
\end{aligned}
\end{equation}
Since $\mu\times \nu$ is finite and the family of cylinder sets generated the $\sigma$-algebra of $X$, $F$ can be arbitrarily approximated in $L^1$-norm by a function $F_0$ defined on $X_N \times X_{\mathcal{F}} \times \mathbb{R}$ for some finite set $\mathcal{F}$. Fix $\epsilon\in (0, 1)$. Then there exists $\mathcal{F} \subseteq \mathbb{N}$ a finite set such that $m\in\mathcal{F}$ and:

\begin{equation}\label{e7}
\begin{aligned}
& \int_{X_N^* \times\mathbb{R}} \Big\vert\, F\big( (2,\bm{x}_N^*), t \big) - F_0\big( (2, \bm{x}_N^*), t \big) \,\Big\vert H(\bm{x}_N^*, t) d\mu_N^*(\bm{x}_N^*) d\nu(t) < \epsilon\\
& \int_{X_N^* \times\mathbb{R}} \Big\vert\, F\big( (0,\bm{x}_N^*), t \big) - F_0\big( (0, \bm{x}_N^*), t \big) \,\Big\vert H(\bm{x}_N^*, t) d\mu_N^*(\bm{x}_N^*) d\nu(t) < \epsilon\\
\end{aligned}
\end{equation}
where $H\in L^{\infty}\big( X_{\mathcal{F}} \times\mathbb{R}, \mu_{\mathcal{F}} \times\nu \big)$ is arbitrarily picked and $N\in\mathbb{N}$ is large enough so that $N\notin \mathcal{F}$. By (\ref{e4}), we also have:

\begin{equation}\label{e8}
\begin{aligned}
& \hspace{0.45cm} \int_{X_N^* \times \mathbb{R}} F\big( \sigma_{N, m} (0, \bm{x}_N^*), t \big) H(\bm{x}_N^*, t) \,d\mu(\bm{x}_N^*) \,d\nu(t)\\
& = \int_{X_N^* \times \mathbb{R}} F\Big( (0, \bm{x}_N^*), t- \log D\big( \sigma_{N, m} (0, \bm{x}_N^*), (0, \bm{x}_N^*) \big) \Big)H(\bm{x}_N^*, t) \,d\mu(\bm{x}_N^*) d\nu(t)\\
& = \int_{X_N^*\times \mathbb{R}} \big( 1-\mu_m(1) \big) F\big( (0, \bm{x}_N^*), t \big) + \mu_m(1) F\left( (0, \bm{x}_N^*), t-\log \frac{\mu_N(2)\mu_m(0)}{\mu_N(0) \mu_m(1)} \right)H(\bm{x}_N^*, t)  \,d\mu(\bm{x}_N^*) d\nu(t)\\
& = \int_{X_N^*\times \mathbb{R}} \big( 1-\mu_m(1) \big) F\big( (0, \bm{x}_N^*), t \big) + \mu_m(1) F\left( (0, \bm{x}_N^*), t+ \log \frac{\mu_m(1)}{\mu_m(0)} - \log \frac{\mu_N(2)}{\mu_N(0)} \right) H(\bm{x}_N^*, t) \,d\mu(\bm{x}_N^*) d\nu(t)
\end{aligned}
\end{equation}
Combining (\ref{e6}), (\ref{e7}) and (\ref{e8}), we have:

\begin{equation}\label{e9}
\int_{X_N^* \times\mathbb{R}} \big( 1 - \mu_m(1) \big) \left\vert\, F\big( (2, \bm{x}_N^*), t \big) - F\left( (0, \bm{x}_N^*), t+ \log \frac{\mu_m(1)}{\mu_m(0)} - \log \frac{\mu_N(2)}{\mu_N(0)} \right) \,\right\vert H(\bm{x}_N^*, t) \,d\mu_N^*(\bm{x}_N^*) d\nu(t) < 2\epsilon
\end{equation}
Since (\ref{e9}) holds for all $H(\bm{x}_N^*, t)\in L^{\infty}\big( X_N^*\times \mathbb{R}, \mu_N^*\times \nu \big)$, we can now conclude for $\mu_N^*$-almost every $\bm{x}_N^* \in X_N^*$ and $\nu$-almost every $t\in \mathbb{R}$:

\begin{equation}\label{e10}
\left\vert\, F\big( (2, \bm{x}_N^*), t \big) - F\left( (0, \bm{x}_N^*), t+ \log \frac{\mu_m(1)}{\mu_m(0)} - \log \frac{\mu_N(2)}{\mu_N(0)} \right) \,\right\vert < 2\epsilon
\end{equation}

\end{proof}

\begin{defn}\label{Definition 2.5}

Given a Bernoulli scheme $G\curvearrowright (X, \mathcal{B}, \mu)$ where $X=\prod_{n\in \mathbb{N}} X_n$ and each $X_n = \big\{0, 1, \cdots, \vert\, X_n\,\vert-1 \big\}$, for each $i\in \mathbb{N} \cup \{0\}$, define:

$$
\mathcal{I}_i = \big\{ n\in \mathbb{N} \cup \{0\}: i\in X_n \big\}, \hspace{1cm} \mathcal{N} = \big\{ i\in \mathbb{N} \cup \{0\}: \vert\, \mathcal{I}_i \,\vert = \infty \big\}
$$
By our assumption in this section, we will only consider the Bernoulli scheme where $\mathcal{I}_0 = \mathcal{I}_1 = \mathbb{N}$ and $\vert\, \mathcal{N} \,\vert = \infty$. For generality, we will assume that $\mathbb{N} \backslash \mathcal{N}$ is also infinite in the rest of the section.
    
\end{defn}

\begin{lem}\label{Lemma 2.6}

For any $F\in L^{\infty} \big( X\times \mathbb{R}, \mu\times\nu \big)^G$, $\epsilon\in (0, 1)$, $i\in \mathcal{N}$ and $m, k\in \mathbb{N}$ with $I_m, I_k>2$, for all $N\in\mathcal{I}_i$ that is large enough, we have:

$$
\left\vert\, F\big( (i, \bm{x}_N^*), t \big) - F\left( (i, \bm{x}_N^*), t+\log\frac{\mu_m(1)}{\mu_m(0)} - \log\frac{\mu_k (2)}{\mu_k (0)} \right) \,\right\vert < \epsilon
$$
for $\mu_N^*$-almost every $\bm{x}_N^*\in X_N^*$ and $\nu$-almost every $t\in \mathbb{R}$.
    
\end{lem}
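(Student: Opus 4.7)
The plan is to extract Lemma \ref{Lemma 2.6} from Lemma \ref{Lemma 2.4} by applying the latter twice, in slightly different guises, and arranging the two translation parameters so that the correction $\log(\mu_N(j)/\mu_N(0))$ cancels. First I would note that the proof of Lemma \ref{Lemma 2.4} is not tied to the particular states $(0,2)$ in position $N$ and $(0,1)$ in position $m$: if $\sigma_{N,m}$ is replaced by the analogous transposition that swaps $0 \leftrightarrow j$ in position $N$ and $0 \leftrightarrow l$ in position $m$ for any $j \in X_N\setminus\{0\}$ and $l \in X_m\setminus\{0\}$, then a verbatim repetition of (\ref{e5})--(\ref{e9}) yields the generalization
\begin{equation*}
\left\vert\, F\bigl((j,\bm{x}_N^*),t\bigr) - F\left((0,\bm{x}_N^*),\, t + \log\frac{\mu_m(l)}{\mu_m(0)} - \log\frac{\mu_N(j)}{\mu_N(0)}\right) \,\right\vert < 2\epsilon'
\end{equation*}
valid for all sufficiently large $N \in \mathcal{I}_j$ and $\mu_N^*\times\nu$-almost every $(\bm{x}_N^*,t)$. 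This reduction is routine, since the original argument only used the $G$-invariance of $F$, the $L^1$-density of cylinder functions, and the explicit form of the Radon-Nikodym derivative of the swap.

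With this generalization in hand I would pick an auxiliary state $j \in \mathcal{N}\setminus\{0\}$ satisfying $i \in \{0,j\}$: take $j := i$ when $i \neq 0$, and $j := 1$ when $i = 0$ (valid since $\mathcal{I}_1 = \mathbb{N}$). For all $N$ sufficiently large in the cofinal set $\mathcal{I}_i \cap \mathcal{I}_j$, apply the generalized estimate twice with the same $j$ in position $N$, once with swap $(0,1)$ in position $m$ and once with swap $(0,2)$ in position $k$. Writing $a_1 := \log\frac{\mu_m(1)}{\mu_m(0)} - \log\frac{\mu_N(j)}{\mu_N(0)}$ and $a_2 := \log\frac{\mu_k(2)}{\mu_k(0)} - \log\frac{\mu_N(j)}{\mu_N(0)}$, the two estimates assert that $F((j,\bm{x}_N^*),t)$ is $2\epsilon'$-close to both $F((0,\bm{x}_N^*),t+a_1)$ and $F((0,\bm{x}_N^*),t+a_2)$.

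The triangle inequality then yields two parallel conclusions: $F((0,\bm{x}_N^*),\cdot)$ is $4\epsilon'$-almost invariant under translation by $a_2 - a_1$, and (by running the same cancellation after first translating each estimate to bring the state-$j$ side onto the unshifted variable) $F((j,\bm{x}_N^*),\cdot)$ is $4\epsilon'$-almost invariant under translation by $a_1 - a_2$. Since $a_1 - a_2 = \log\frac{\mu_m(1)}{\mu_m(0)} - \log\frac{\mu_k(2)}{\mu_k(0)}$ and invariance under any shift $\alpha$ is equivalent to invariance under $-\alpha$, in either case the state $i \in \{0,j\}$ acquires the claimed $4\epsilon'$-almost invariance; setting $\epsilon' := \epsilon/4$ produces the stated bound. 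The main (and essentially only) obstacle is the initial generalization of Lemma \ref{Lemma 2.4}, and that is purely cosmetic.
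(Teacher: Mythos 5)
Your proposal is correct and takes essentially the same route as the paper: the paper likewise generalizes the transposition of Lemma \ref{Lemma 2.4} (via the maps $\sigma^i_{N,m}$ and $\tau^i_{N,k}$), applies the resulting estimate twice with the same swap in coordinate $N$, and cancels the $N$-dependent shift by the triangle inequality. The only difference is cosmetic --- the paper anchors both swaps at the state $2$ in coordinate $N$, whereas you anchor at $0$ versus $j\in\{i,1\}$, which if anything fits the statement's requirement of all large $N\in\mathcal{I}_i$ a bit more cleanly.
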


\begin{proof}

Fix $F\in L^{\infty}\big( X\times \mathbb{R}, \mu\times\nu \big)^G$, $\epsilon\in (0, 1)$. Fix $m\in \mathbb{N}$ with $I_m>2$. According to \textbf{Lemma \ref{Lemma 2.4}}, there exists $N\in\mathbb{N}$ with $I_N>2$ such that:

$$
\left\vert\, F\big( (2, \bm{x}_N^*), t \big) - F\left( (0, \bm{x}_N^*), t+\log\frac{\mu_m(1)}{\mu_m(0)} - \log\frac{\mu_N(2)}{\mu_N(0)} \right) \,\right\vert < \frac{\epsilon}{2}
$$
for $\mu_N^*$-almost every $\bm{x}_N^*\in X_N^*$ and $\nu$-almost every $t\in \mathbb{R}$. Next fix $k\in \mathbb{N}$ with $I_k>2$. According to the proof of \textbf{Lemma \ref{Lemma 2.4}}, the $N$ we pick for (\ref{e10}) to hold can be arbitrarily large. Then for any $N\in\mathcal{I}_i$ with $N>k$, define: 

$$
\tau_{N, k}: X\longrightarrow X, \hspace{0.2cm} \bm{x} \mapsto
\begin{cases}
\big( \tau_{N, k}(\bm{x}) \big)_N = 2\, \text{  and  }\, \big( \tau_{N, k}(\bm{x}) \big)_k=0, \hspace{1cm} \text{  if  } x_N=0\, \text{  and  }\, x_k=2\\
\big( \tau_{N, k}(\bm{x}) \big)_N = 0\, \text{  and  }\, \big( \tau_{N, k}(\bm{x}) \big)_k=2, \hspace{1cm} \text{  if  } x_N=2\, \text{  and  }\, x_k=0\\
\big( \tau_{N, k}(\bm{x}) \big)_n=x_n, \hspace{4.2cm} \text{otherwise}
\end{cases}
$$
Similar to $\sigma_{N, m}$, $\tau_{N, k}$ is a partial isomorphism. Observe that for $\mu$-almost each $\bm{x}\in X$:

$$
\log D\big( \bm{x}, \tau_{N, k}(\bm{x}) \big) = \log\frac{\mu_N(2)\mu_k(0)}{\mu_N(0)\mu_k(2)}
$$
Hence, replacing $\sigma_{N, m}$ by $\tau_{N, k}$ in the proof of \textbf{Lemma \ref{Lemma 2.4}}, we can then assume $N$ is large enough so that:

\begin{equation}\label{e11}
\left\vert\, F\big( (2, \bm{x}_N^*), t \big) - F\left( (0, \bm{x}_N^*), t+\log\frac{\mu_k(2)}{\mu_k(0)} - \log\frac{\mu_N(2)}{\mu_N(0)} \right) \,\right\vert < \frac{\epsilon}{2}
\end{equation}
Combining (\ref{e10}) and (\ref{e11}), we then have for $\mu_N^*$-almost every $\bm{x}_N^*$ and $\nu$-almost every $t\in \mathbb{R}$:

$$
\left\vert\, F\left( (0, \bm{x}_N^*), t+\log\frac{\mu_k(2)}{\mu_k(0)} - \log\frac{\mu_N(2)}{\mu_N(0)} \right) - F\left( (0, \bm{x}_N^*), t+\log\frac{\mu_m(1)}{\mu_m(0)} - \log\frac{\mu_N(2)}{\mu_N(0)} \right) \,\right\vert < \epsilon
$$
which implies for $\mu_N^*$-almost every $\bm{x}_N^*\in X_N^*$ and $\nu$-almost every $t\in \mathbb{R}$:

$$
\left\vert\, F\big( (0, \bm{x}_N^*), t \big) - F\left( (0, \bm{x}_N^*), t+\log\frac{\mu_m(1)}{\mu_m(0)} - \log\frac{\mu_k(2)}{\mu_k(0)} \right) \,\right\vert < \epsilon
$$
By assumption, $\{0, 1, 2\}\subseteq\mathcal{N}$, so the desired inequality for an arbitrary $i\in \mathcal{N}$ can be proved by the same reasoning. Fix $i\in \mathcal{N}$ and for any $N\in \mathcal{I}_i$ with $N>m$, $N>k$, define:

$$
\sigma_{N, m}^i: X\longrightarrow X, \hspace{0.2cm} \bm{x} \mapsto
\begin{cases}
\big( \sigma^i_{N, m}(\bm{x}) \big)_N = 2\, \text{  and  }\, \big( \sigma^i_{N, m}(\bm{x}) \big)_m=0, \hspace{1cm} \text{  if  } x_N=i\, \text{  and  }\, x_m=1\\
\big( \sigma^i_{N, m}(\bm{x}) \big)_N = i\, \text{  and  }\, \big( \sigma^i_{N, m}(\bm{x}) \big)_m=1, \hspace{1.05cm} \text{  if  } x_N=2\, \text{  and  }\, x_m=0\\
\big( \sigma^i_{N, m}(\bm{x}) \big)_n=x_n, \hspace{4.42cm} \text{otherwise}
\end{cases}
$$
and:

$$
\tau_{N, k}^i: X\longrightarrow X, \hspace{0.2cm} \bm{x} \mapsto
\begin{cases}
\big( \tau^i_{N, k}(\bm{x}) \big)_N = 2\, \text{  and  }\, \big( \tau^i_{N, k}(\bm{x}) \big)_k=0, \hspace{1cm} \text{  if  } x_N=i\, \text{  and  }\, x_k=2\\
\big( \tau^i_{N, k}(\bm{x}) \big)_N = i\, \text{  and  }\, \big( \tau^i_{N, k}(\bm{x}) \big)_k=2, \hspace{1.06cm} \text{  if  } x_N=2\, \text{  and  }\, x_k=0\\
\big( \tau^i_{N, k}(\bm{x}) \big)_n=x_n, \hspace{4.2cm} \text{otherwise}
\end{cases}
$$
Observe that for $\mu$-almost every $\bm{x}\in X$:

$$
\log D\big( \bm{x}, \sigma_{N, m}^i(\bm{x}) \big) = \log \frac{\mu_N(2)\mu_m(0)}{\mu_N(i)\mu_m(1)}, \hspace{1cm}
\log D\big( \bm{x}, \tau_{N, k}^i(\bm{x}) \big) = \log\frac{\mu_N(2) \mu_k(0)}{\mu_N(i) \mu_k(2)}
$$
Similarly, replacing $\sigma_{N, m}$ by $\sigma_{N, m}^i$ or $\tau_{N, k}^i$ in the proof of \textbf{Lemma \ref{Lemma 2.4}}, there exists $N\in \mathcal{I}_i$ with:

$$
\begin{aligned}
& \hspace{1cm} \left\vert\, F\big( (2, \bm{x}_N^*), t \big) - F\left( (i, \bm{x}_N^*), t+\log\frac{\mu_m(1)}{\mu_m(0)} - \log\frac{\mu_N(2)}{\mu_N(i)} \right) \,\right\vert < \frac{\epsilon}{2} \\
& \hspace{1cm} \left\vert\, F\big( (2, \bm{x}_N^*), t \big) -F\left( (i, \bm{x}_N^*), t+\log\frac{\mu_k(2)}{\mu_k(0)} - \log\frac{\mu_N(2)}{\mu_N(i)} \right) \,\right\vert < \frac{\epsilon}{2}
\end{aligned}
$$
which implies $\mu_N^*$-almost every $\bm{x}_N^*\in X_N^*$ and $\nu$-almost every $t\in \mathbb{R}$:

\begin{equation}\label{e14}
\left\vert\, F\big( (i, \bm{x}_N^*), t \big) - F\left( (i, \bm{x}_N^*), t+\log\frac{\mu_m(1)}{\mu_m(0)} - \log\frac{\mu_k(2)}{\mu_k(0)} \right) \,\right\vert < \epsilon
\end{equation}
    
\end{proof}

\begin{rem}\label{Remark 2.7}

In \textbf{Lemma \ref{Lemma 2.6}}, by adjusting the definition of the partial isomorphism $\sigma_{N, m}^i$ and $\tau_{N, k}^i$, we further have that for any $\epsilon\in (0, 1)$, for any $i, j\in\mathcal{N}$ with $i, j \neq 0$ and for any $m\in \mathcal{I}_i$, $k\in \mathcal{I}_j$, we have whenever $N\in \mathbb{N}$ is large enough:

\begin{equation}\label{e12}
\left\vert\, F\big( (l, \bm{x}_N^*), t\big) - F\left( \big(l, \bm{x}_N^*), t+\log \frac{\mu_m(i)}{\mu_m(0)} + \log \frac{\mu_k(j)}{\mu_k(0)} \right) \,\right\vert < \epsilon
\end{equation}
for any $l\in X_N$, $\nu_N^*$-almost every $\bm{x}_N^*\in X_N^*$ and for $\nu$-almost every $t\in \mathbb{R}$ whenever $N\in\mathbb{N}$ is large enough. Suppose for some $i, j\in \mathcal{N}$ and for some $r, s\in(0, 1)$, there exists $(m_n)_{n\in\mathbb{N}}$, $(k_n)_{n\in \mathbb{N}}$ two infinite subsets of $\mathbb{N}$ such that $(m_n)_{n\in \mathbb{N}} \subseteq \mathcal{I}_i$, $(k_n)_{n\in \mathbb{N}} \subseteq \mathcal{I}_j$ and:

$$
\lim_n \frac{\mu_{m_n}(i)}{\mu_{m_n}(0)} = r, \hspace{1cm} \lim_n \frac{\mu_{k_n}(j)}{\mu_{k_n}(0)} = s
$$
Let $(\epsilon_n)_{n\in \mathbb{N}} \subseteq (0, 1)$ with $\lim_n \epsilon_n=0$. Without losing generality (or restrict to a subsequence), suppose for all $n\in\mathbb{N}$:

$$
\left\vert\, \log\frac{\mu_{m_n}(i)}{\mu_{m_n}(0)} - \log r \,\right\vert =\left\vert\, \log\frac{\mu_{k_n}(j)}{\mu_{k_n}(0)} - \log r \,\right\vert = o\left( \frac{1}{n} \right)
$$
Therefore:

\begin{equation}\label{e13}
\begin{aligned}
& \hspace{0.4cm} \left\vert\, F\big( (l, \bm{x}_N^*), t \big) - F\left( (l, \bm{x}_N^*), t+\log \frac{\mu_{m_n}(i)}{\mu_{m_n}(0)} \right) + \log \frac{\mu_{k_n}(j)}{\mu_{k_n}(0)} \,\right\vert \\
& = \left\vert\, F\big( (l, \bm{x}_N^*), t \big) - F\left( (l, \bm{x}_N^*), t+\log r +\log s \pm o\left(\frac{2}{n} \right) \right) \,\right\vert < \epsilon_n
\end{aligned}
\end{equation}
for each $l\in X_N$, $\nu$-almost every $t\in \mathbb{R}$ and $\bm{\mu}_N^*$-almost every $\bm{x}_N^*\in X_N^*$ whenever $N\in \mathbb{N}$ is large enough. According to (\ref{e14}), whenever (\ref{e13}) holds, we can replace $(0, \bm{x}_N^*)$ in (\ref{e13}) by $(l, \bm{x}_N^*)$ for any $l\in X_N$.
    
\end{rem}

\begin{rem}\label{Remark 2.8}

To prove the main theorem, we will need the following result:
given an integrable function $f\in L^1\big( \mathbb{R}, \nu\big)$, there exists a sequence of differentiable function $(f_n)_{n\in\mathbb{N}}$ such that $\lim_n \|f_n-f\|_1 = 0$. Then there exists a strictly increasing sequence of integers $(N_k)_{k\in \mathbb{N}}$ such that for each $k\in\mathbb{N}$, $\|f_m-f_n\|_1 < \dfrac{1}{2^k}$ whenever $m, n\geq N_k$. Observe that for each $k\in\mathbb{N}$:

$$
f_{N_k} = f_{N_1} + \sum_{1\leq i < k} f_{N_{i+1}} - f_{N_i}, \hspace{1cm} g_k = \big\vert\, f_{N_1} \,\big\vert + \sum_{1\leq i \leq k} \big\vert\, f_{N_{i+1}} - f_{N_i} \,\big\vert
$$
Clearly $\sup_k \|g_k\|_1 < \infty$. Hence $f_{N_k}$ converges absolutely (hence unconditionally) $\nu$-almost everywhere to some $\tilde{f}\in L^1\big( \mathbb{R}, \nu \big)$. By \textbf{Bounded Convergence Theorem}, we have:

$$
\int_{\mathbb{R}} \big\vert\, \tilde{f}(t) - f(t) \,\big\vert d\nu(t) = \int_{\mathbb{R}} \big\vert\, \lim_k f_{N_k}(t) - f(t) \,\big\vert d\nu(t) = \lim_{k\in\mathbb{N}} \int_{\mathbb{R}} \big\vert\, f_{N_k}(t) - f(t) \,\big\vert d\nu(t) = \lim_k \| f_{N_k} - f \|_1 = 0
$$
which implies $\tilde{f} = f$, or $\big( f_{N_k} \big)_{k\in\mathbb{N}}$ also converges to $f$ $\nu$-almost everywhere.
    
\end{rem}

We are now ready to prove the precise description of those three types of type-$III$, under the assumption that $(X, R, \mu)$ is of type-$III$ and that infinitely many $X_n$ satisfies that $\vert\, X_n\,\vert > 2$. By our assumption that $\limsup_n \vert\, X_n\,\vert = \infty$, we must have $\mathcal{N}$ (see \textbf{Definition \ref{Definition 2.5}}) is an infinite set. For generality, we can also assume that $\mathbb{N} \backslash \mathcal{N}$ is infinite. According to (\ref{e13}), for every $i, j\in \mathcal{N} \backslash \{0\}$, the clustered points of both of the following sets will be in the associated flow:

$$
\bigcup_{i, j\in \mathcal{N}} \left\{ \log \frac{\mu_n(i)}{\mu_n(0)} + \log\frac{\mu_k(j)}{\mu_k(0)}: n\in\mathcal{I}_i, k\in \mathcal{I}_j \right\}, \hspace{1cm} \bigcup_{j\in \mathcal{N}} \bigcup_{i\notin \mathcal{N}} \left\{ \log \frac{\mu_n(j)}{\mu_n(0)} + \log \frac{\mu_m(i)}{\mu_m(0)}: n\in \mathcal{I}_j, m\in \mathcal{I}_i \right\}
$$
For each $i\in \mathcal{N}\backslash \{0\}$, define:

\begin{equation}\label{e15}
\mathcal{M}_i = \left\{ \frac{\mu_m(i)}{\mu_m(0)}: m\in \mathcal{I}_i \right\}
\end{equation}
Recall that we assume that for each $\mu_n$ in $\mu = \otimes_{n\in \mathbb{N}} \mu_n$, $\mu_n(0)$ is the maximum weight. Hence for all $i\in\mathcal{N}\backslash \{0\}$, $\mathcal{M}_i \subseteq (0, 1)$ and clustered points of $\mathcal{M}_i$ must be in $[0, 1]$.

\begin{theorem}\label{Theorem 2.8}

Given a type-$III$ Bernoulli scheme $G\curvearrowright (X, \mathcal{B}, \mu)$, if $\limsup_n \vert\, X_n\,\vert = \infty$, $\mu$ is of type-$III_1$ if and only if one of the following is true:

\begin{enumerate}[label = (\roman*)]

    \item the set of clustered points of the following set (presumably infinite)

    \begin{equation}\label{e16}
    \mathcal{M}_F = \bigcup_{i\notin \mathcal{N}} \left\{ \frac{\mu_m(i)}{\mu_m(0)}: m\in \mathcal{I}_i \right\}
    \end{equation}
    contains $0$ or there exists $i\in\mathcal{N}$ such that the multiplicative group generated by cluster points of $\mathcal{M}_i \cup \mathcal{M}_F$ is $(0, \infty)$.

    \item there exists two different $i, j\in\mathcal{N}$ such that $\mathcal{M}_i \cup \mathcal{M}_j$ either has $0$ as one of its clustered points, or all clustered points of $\mathcal{M}_i\cup \mathcal{M}_j$ are non-zero and the multiplicative group generated by clustered points of $\mathcal{M}_i \cup \mathcal{M}_j$ is $(0, \infty)$.

    \item for any $\epsilon\in(0, 1)$, there exists $i\in\mathcal{N}$ and $r_i$ a clustered point of $\mathcal{M}_i$ such that $\vert\,r_i\,\vert < \epsilon$. Equivalently:

    $$
    \inf_{i\in \mathcal{N}} \big( \liminf\mathcal{M}_i \big) =0
    $$
    where, by $\liminf \mathcal{M}_i$, each $\mathcal{M}_i$ is viewed as a sequence of numbers 
    
\end{enumerate}
    
\end{theorem}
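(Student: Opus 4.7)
Plan. My approach is to work through the Maharam extension: by the classification theorem following Definition \ref{Definition 1.20}, $\mu$ is of type-$III_1$ if and only if every $G$-invariant $F\in L^{\infty}(X\times\mathbb{R},\mu\times\nu)$ is essentially constant. Using Lemmas \ref{Lemma 2.4} and \ref{Lemma 2.6} together with Remark \ref{Remark 2.7}, any such $F$ must be invariant under the $t$-translation by every sum $\log r+\log s$ where $r,s$ are cluster points of $\mathcal{M}_i,\mathcal{M}_j$ respectively for some $i,j\in\mathcal{N}\setminus\{0\}$. By adjusting the partial isomorphisms $\sigma^{i}_{N,m}$ and $\tau^{i}_{N,k}$ so as to swap the roles of $0$ and $i$, one similarly gets invariance under the differences $\log r-\log s$, and the analogous construction pairing a coordinate in $\mathcal{I}_i$ for $i\notin\mathcal{N}$ against one in $\mathcal{I}_j$ for $j\in\mathcal{N}$ yields invariance under $\log u+\log s$ for $u$ a cluster point of $\mathcal{M}_F$. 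Thus the set $H(F)\leq\mathbb{R}$ of $t$-translations fixing $F$ is a closed subgroup containing all these expressions, and it suffices to prove $H(F)=\mathbb{R}$ in each case (since then $F$ depends only on $\bm{x}$ and is constant by $G$-ergodicity on $(X,\mu)$).

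I would handle the three sufficiency implications in turn. Under (i), if $0$ is a cluster point of $\mathcal{M}_F$, take a sequence $u_n\to 0$ in $\mathcal{M}_F$, pair with a fixed positive cluster point $s$ of some $\mathcal{M}_j$ with $j\in\mathcal{N}$, and form the differences $\log(u_n/u_m)\in H(F)$; as $n,m$ vary and the $u_n,u_m$ range over small values of $\mathcal{M}_F$, this produces a set dense in $\mathbb{R}$, where I invoke Remark \ref{Remark 2.8} to upgrade the $L^{1}$-approximate inequalities of the earlier lemmas to genuine pointwise identities in $t$. In the remaining subcase of (i) the multiplicative group generated by the cluster points of $\mathcal{M}_i\cup\mathcal{M}_F$ is already $(0,\infty)$, so density of $H(F)$ is immediate after taking logarithms. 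Condition (ii) is handled identically with $\mathcal{M}_i\cup\mathcal{M}_j$ in place of $\mathcal{M}_i\cup\mathcal{M}_F$. For (iii), I pick cluster points $r_n$ of various $\mathcal{M}_{i_n}$ with $r_n\in(0,\epsilon_n)$ and $\epsilon_n\to 0$, and again use the differences $\log(r_n/r_m)\in H(F)$ combined with Remark \ref{Remark 2.8} to conclude that $H(F)$ contains a dense set.

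For the converse I would reason contrapositively. If all of (i), (ii), (iii) fail, then the cluster points of $\bigcup_{i\in\mathcal{N}}\mathcal{M}_i\cup\mathcal{M}_F$ are forced to lie in a single discrete multiplicative subgroup $\lambda^{\mathbb{Z}}$ for some $\lambda\in(0,1)$, so that the translations on $F$ extracted from the lemmas all lie in $(\log\lambda^{-1})\mathbb{Z}$; a nontrivial character $t\mapsto e^{2\pi i t/\log\lambda^{-1}}$, pulled back through the Maharam extension, then furnishes an explicit nonconstant $F\in L^{\infty}(X\times\mathbb{R},\mu\times\nu)^{G}$, so the associated flow is nontrivial and $\mu$ is at best of type-$III_\lambda$. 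The step I expect to be hardest is the density argument for (iii): the translations $\log r_n+\log s$ only obviously accumulate at $-\infty$ rather than at $0$, so I would need to exploit carefully that the index $i_n$ in condition (iii) may vary with $\epsilon$, producing $r_n$ from a rich enough collection of cluster-point data that the differences $\log(r_n/r_m)$ fill a dense subset of $\mathbb{R}$. Remark \ref{Remark 2.8}, which converts the $L^{1}$-approximate invariance statements of Lemmas \ref{Lemma 2.4}--\ref{Lemma 2.6} into genuine pointwise almost-sure identities in $t$, should be exactly the tool needed to finish this argument.
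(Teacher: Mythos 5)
Your overall framework (Maharam extension, translation invariance of $G$-invariant functions extracted from the swap maps $\sigma^i_{N,m}$, $\tau^i_{N,k}$) is the same as the paper's, and your treatment of the second alternatives in (i) and (ii) — where the cluster points generate a dense multiplicative group — is essentially the paper's argument. But the cases you yourself flag as hardest contain a genuine gap: for condition (iii), and for the ``$0$ is a cluster point'' alternatives in (i) and (ii), your plan reduces everything to showing that the closed group $H(F)$ of $t$-translations fixing $F$ is all of $\mathbb{R}$, to be obtained from differences $\log(u_n/u_m)$ with $u_n\to 0$. There is no reason such differences are dense: take $u_n=\lambda^n$, so that all nonzero cluster points lie in the discrete group $\lambda^{\mathbb{Z}}$ while $0$ is still a cluster point; then every difference lies in $(\log\lambda)\mathbb{Z}$ and no amount of ``richer cluster-point data'' is guaranteed to exist. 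Yet the theorem asserts type-$III_1$ in exactly this situation, so density of $H(F)$ is the wrong mechanism. The paper handles this case differently: from (\ref{e44}) one only has \emph{approximate} invariance of $F(\bm{x},\cdot)$ under translations $\log\frac{\mu_{m_n}(i)}{\mu_{m_n}(0)}$ that diverge to $-\infty$, and the paper approximates $F(\bm{x},\cdot)$ in $L^1(\nu)$ by compactly supported functions $h_{N'(M)}$; once the translation amount exceeds twice the support length, the approximants are forced to be uniformly small on finer and finer nets, and a three-epsilon argument shows $F(\bm{x},\cdot)$ is constant. That argument uses the divergence of the translations, not any density of a subgroup, and it is the missing idea in your proposal.

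Your converse also has a concrete problem: after concluding that the cluster points lie in $\lambda^{\mathbb{Z}}\cup\{1\}$, you propose the explicit function $F(\bm{x},t)=e^{2\pi i t/\log\lambda^{-1}}$ as a nonconstant element of $L^{\infty}(X\times\mathbb{R},\mu\times\nu)^G$. This $F$ is $G$-invariant only if $\log D(\bm{x},g\bm{x})\in(\log\lambda)\mathbb{Z}$ \emph{exactly}, whereas the hypothesis only places the \emph{cluster points} of the weight ratios in $\lambda^{\mathbb{Z}}$; finitely indexed ratios, and hence the cocycle values, need not be exact powers of $\lambda$, so your character is not invariant and would require an additional perturbation/cohomology argument. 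The paper avoids this by arguing directly on the ratio set: failure of (iii) makes the set of cluster points finite and bounded away from $0$, failure of (i)--(ii) makes them pairwise rationally dependent, hence contained in $\{\lambda^n\}\cup\{1\}$, and then the description of the Radon--Nikodym cocycle from Lemmas \ref{Lemma 2.1} and \ref{Lemma 2.2} shows every value of the cocycle over a tail is within $\epsilon$ of $\lambda^{\mathbb{Z}}$, so the essential range cannot be $[0,\infty)$, contradicting type-$III_1$ without ever constructing an invariant eigenfunction.
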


\begin{proof}

We will first consider the case where $(ii)$ is true. The proof of this case will be based on (\ref{e13}). We will start from the case where there exists $r, s\in(0, 1)$, two index set $(m_n)_{n\in \mathbb{N}} \subseteq \mathcal{I}_i$, $(k_n)_{n\in \mathbb{N}} \subseteq \mathcal{I}_j$ such that:

\begin{equation}\label{e17}
\begin{aligned}
& \left\{ \frac{\mu_{m_n}(i)}{\mu_{m_n}(0)} \right\}_{n\in \mathbb{N}} \subseteq \mathcal{M}_i, \hspace{1cm} \lim_n \frac{\mu_{m_n}(i)}{\mu_{m_n}(0)} = r\\
& \left\{ \frac{\mu_{k_n}(j)}{\mu_{k_n}(0)} \right\}_{n\in \mathbb{N}} \subseteq \mathcal{M}_j, \hspace{1cm} \lim_n \frac{\mu_{k_n}(j)}{\mu_{k_n}(0)} = s\\
\end{aligned}
\end{equation}
and the multiplicative group generated by $r$ and $s$ is the entire $(0, \infty)$. Fix $(\epsilon_n)_{n\in \mathbb{N}} \subseteq (0, 1)$ and $F\in L^{\infty}\big( X\times \mathbb{R}, \mu\times \nu\big)^G$. According to \textbf{Remark \ref{Remark 2.8}} (in particular (\ref{e13})), whenever $n, N\in\mathbb{N}$ is large enough, for $\nu$-almost every $t\in \mathbb{R}$ and $\mu_N^*$-almost each $\bm{x}_N^*\in X_N^*$:

\begin{equation}\label{e48}
\begin{aligned}
& \hspace{1cm} \forall\, l\in X_N, \hspace{0.2cm} \left\vert\, F\left( (l, \bm{x}_N^*), t\pm o\left( \frac{2}{n} \right) \right) - F\left( (l, \bm{x}_N^* ), t+\log r+\log s \right) \,\right\vert < \epsilon_n\\
& \implies\, \left\vert\, F\left( \bm{x}, t\pm o\left( \frac{2}{n} \right) \right) - F\big( \bm{x}, t+\log r+\log s \big) \,\right\vert < \epsilon_n
\end{aligned}
\end{equation}
As (\ref{e48}) holds for all large enough $n\in\mathbb{N}$, we can then conclude that $\log rs$ is in the associated flow, or $rs$ is in the ratio set. If for all $i\in \mathcal{N}$, clustered points of $\mathcal{M}_i$ are non-zero and there exists two different $i, j\in \mathcal{N}$ such that the multiplicative group generated by clustered points of $\mathcal{M}_i\cup \mathcal{M}_j$ is $(0, \infty)$, there exists $r_i$, a clustered point of $\mathcal{M}_i$, and $r_j$ a clustered point of $\mathcal{M}_j$ such that $r_i$ and $r_j$ are rationally independent. Then fix $l\in \mathcal{N} \backslash \{i, j\}$ and let $r_l$ be a clustered point of $\mathcal{M}_l$. By the same reasoning, we will then have $r_lr_i$ and $r_lr_j$ in the ratio set. Since $r_i$, $r_j$ are rationally independent, so are $r_lr_i$ and $r_lr_j$, and hence the ratio set of $\mu$ is $(0, \infty)$. This proves that when the second condition in \textit{(ii)} is satisfied, $\mu$ is of type-$III_1$.\\

\noindent
Assume that the second condition of \textit{(i)} is satisfied. Then there exists non-zero two clustered points $r, s$ of $\mathcal{M}_i\cup \mathcal{M}_F$ such that $r$ and $s$ are rationally independent. By the same reasoning, we have both $rs$ and $r^2$ are in the ratio set. Since $r$ and $s$ are rationally independent, $r^2$ and $rs$ are rationally independent, and hence the ratio set of $\mu$ is of type-$III_1$.\\

\noindent
Next we will prove that $\mu$ is of type-$III_1$ whenever the first case in condition \textit{(ii)} is satisfied, and the same conclusion follows when the first case in condition \textit{(i)} is satisfied. Without confusion, we will pick $r$ and clustered point of $\mathcal{M}_i$ and $s$ a clustered point of $\mathcal{M}_j$ as in (\ref{e17}) but instead assume that $r=0$. According to (\ref{e12}), for an arbitrary $\epsilon\in (0, 1)$, whenever $n\in\mathbb{N}$ is large enough, we have:

$$
\left\vert\, F(\bm{x}, t) - F\left( \bm{x}, t+\log\frac{\mu_{m_n}(i)}{\mu_{m_n}(0)} + \log \frac{\mu_{k_n}(j)}{\mu_{k_n}(0)} \right) \,\right\vert < \epsilon
$$
for $\nu$-almost each $t\in\mathbb{N}$ and $\mu$-almost every $\bm{x}\in X$. In this case we cannot use the little-$o$ notation as $\log \dfrac{\mu_{m_n}(i)}{\mu_{m_n}(0)}$ will diverge to $-\infty$. As we are using notations introduced in (\ref{e17}) and $s\in [0, 1]$, it suffices to only consider:

\begin{equation}\label{e44}
\left\vert\, F(\bm{x}, t) - F\left( \bm{x}, t+\log \frac{\mu_{m_n}(i)}{\mu_{m_n}(0)} \right) \,\right\vert < \epsilon
\end{equation}
as the following sequence:

$$
\left\{ \log \frac{\mu_{m_n}(i)}{\mu_{m_n}(0)} + \log \frac{\mu_{k_n}(j)}{\mu_{k_n}(0)} \right\}_{n\in \mathbb{N}}
$$
will diverge to $-\infty$ regardless of the value of $s$. Fix $\bm{x}\in X$. By \textbf{Remark \ref{Remark 2.8}}, there exists a sequence of differentiable functions $(h_n)_{n\in \mathbb{N}}$ such that the sequence $(h_n)_{N\in \mathbb{N}}$ converges to $F(\bm{x}, \cdot)$ in $\|\cdot\|_1$ and pointwise $\nu$-almost everywhere. Since $F(\bm{x}, \cdot) \in L^{\infty}(\mathbb{R}, \nu)$, we can assume each $h_n$ has compact support, which will not cause conflict with our assumption that $(h_n)_{n\in \mathbb{N}}$ converges pointwise to $F(\bm{x}, \cdot)$ $\nu$-almost everywhere. Suppose that the support of each $h_n$ is contained in a closed finite interval $I_n$. We use $\vert\, I_n\,\vert$ to denote the length (or Lebesgue measure) of $I_n$.\\

\noindent
First, fix $(\delta_M) \subseteq (0, 1)$ such that $\delta_M \rightarrow 0$. Fix $M\in\mathbb{N}$ and $m'\in\mathbb{N}$ such that whenever $n\geq m'$, (\ref{e44}) holds for $\nu$-almost each $t\in \mathbb{R}$ and for $\epsilon=\delta_M$. Let $\mathcal{P}_M$ be a finite $\delta_M$-net of $[-M, M]$. Then let $N'(M)\in \mathbb{N}$ be such that:

\begin{equation}\label{e45}
\forall\,n'\geq N'(M), \hspace{0.5cm} \max_{t\in \mathcal{P}_M} \big\vert\, F(\bm{x}, t) - h_{n'}(t) \,\big\vert < \delta_M
\end{equation}
By our assumption, for all large enough $l\in\mathbb{N}$ with $l\geq m'$, we have:

\begin{equation}\label{e46}
\left\vert\, \log \frac{\mu_{m_l}(i)}{\mu_{m_l}(0)} \,\right\vert > 2\max\big( \vert\, I_{N'(M)} \,\vert\,,\, M \big)
\end{equation}
Fix $l\in\mathbb{N}$ so that (\ref{e46}) holds. Combining (\ref{e45}) and (\ref{e44}), we have:

$$
\forall\,t\in \mathcal{P}_M, \hspace{0.5cm} \left\vert\, h_{N'(M)}(t) - h_{N'(M)}\left( t\pm\log \frac{\mu_{m_l}(i)}{\mu_{m_l}(0)} \right) \,\right\vert < 2\delta_M
$$
According to (\ref{e46}), we have:

$$
\forall\,t\in\mathcal{P}_M, \hspace{0.3cm} t\pm \log \frac{\mu_{m_l}(i)}{\mu_{m_l}(0)} \notin I_{N'(M)} \hspace{1cm} \Longrightarrow \hspace{1cm} \forall\,t\in \mathcal{P}_M, \hspace{0.3cm} h_{N'(M)} \left( t\pm\log \frac{\mu_{m_l}(i)}{\mu_{m_l}(0)} \right) = 0
$$
which implies:

$$
\forall\,t\in\mathcal{P}_M, \hspace{0.3cm} \vert\, h_{N'(M)}(t) \,\vert < 2\delta_M
$$
Next, for the $M+1$ case, consider $m''\in\mathbb{N}$ such that whenever $n\geq m''$, (\ref{e44}) holds for $\nu$-almost every $t\in \mathbb{R}$ and for $\epsilon=\delta_{M+1}$. Next let $\mathcal{P}_{M+1}$ be a finite $\delta_{M+1}$-net that contains $\mathcal{P}_M$. Then we can find $N'(M+1)$ so that:

$$
\forall\,n'\geq N'(M+1), \hspace{0.5cm} \max_{t\in \mathcal{P}_{M+1}} \big\vert\, F(\bm{x}, t) - h_{n'}(t) \,\big\vert < \delta_{M+1}
$$
which is (\ref{e45}) but with $M$ replaced by $M+1$. Then find $l'\in\mathbb{N}$ such that (\ref{e46}) holds with $m_l$ replaced by $m_{l'}$ and $N'(M)$ replaced by $N'(M+1)$. As a result, we will have that:

$$
\forall\,t\in \mathcal{P}_{M+1}, \hspace{0.3cm} \vert\, h_{N'(M+1)}(t) \,\vert < 2\delta_{M+1}.
$$
For each $M\in\mathbb{N}$, put $f_M = h_{N'(M)}$. We then have $(f_M)_{M\in \mathbb{N}}$ converges to $F(\bm{x}, \cdot)$ in $\|\cdot\|_1$ and pointwise $\nu$-almost everywhere. Since $\delta_M \rightarrow 0$ and each $\mathcal{P}_M$ is a finite $\delta_M$-net of $[-M, M]$, for two fixed different points $t, t'\in [-M, M]$, we have:

$$
\vert\, f_{M'}(t) - f_{M'}(t') \,\vert \leq 4\delta_{M'} < \delta_M
$$
whenever $M'\geq M$ is large enough. Suppose that there exists $t, t'\in \mathbb{R}$ such that $F(\bm{x}, t) \neq F(\bm{x}, t')$. Define: 

$$
\delta = \big\vert\, F(\bm{x}, t) - F(\bm{x}, t') \,\big\vert
$$ 
Fix $M\in\mathbb{N}$ such that $t, t'\in [-M, M]$ and $\delta_M<3\delta$. Next fix $M'\in \mathbb{N}$ with $M'>M$ such that:

\begin{equation}\label{e47}
\big\vert\, f_{M'}(t) - F(\bm{x}, t) \,\big\vert < \frac{\delta}{3}, \hspace{1cm} \big\vert\, f_{M'}(t) - f_{M'}(t') \,\big\vert < \frac{\delta_M}{3}, \hspace{1cm} \big\vert\, f_{M'}(t') - F(\bm{x}, t') \,\big\vert < \frac{\delta}{3}
\end{equation}
With the choice of $M'$ and $M$ that make all inequalities in (\ref{e47}) hold, we then have:

$$
\begin{aligned}
\delta
& = \big\vert\, F(\bm{x}, t) - F(\bm{x}, t') \,\big\vert\\
& \leq \big\vert\, f_{M'}(t) - F(\bm{x}, t) \,\big\vert + \big\vert\, f_{M'}(t) - f_{M'}(t') \,\big\vert + \big\vert\, f_{M'}(t) - F(\bm{x}, t') \,\big\vert\\
& < \frac{\delta}{3} + \frac{\delta_M}{3} + \frac{\delta}{3} < \delta
\end{aligned}
$$
which is absurd. Therefore, we can now conclude $F(\bm{x}, \cdot)$ is a constant function. Since $\bm{x}\in X$ is arbitrarily fixed and $F\in L^{\infty}\big( X\times \mathbb{R}, \mu\times \nu\big)^G$ is also abitrarily fixed, we can then conclude the associated flow is trivial when $0$ is a clustered point of $\mathcal{M}_i$ for some $i\in\mathcal{N}$, and hence prove that $\mu$ is of type-$III_1$ when the first case in condition \textit{(ii)} is true. Similarly, when the first case in condition \textit{(i)} is true, $\mathcal{M}_F$ will contain a sequence converging to zero. Replacing $\left\{ \dfrac{\mu_{m_n}(i)}{\mu_{m_n}(0)} \right\}$ by that sequence in (\ref{e44}), then the rest will follow the same way.Similarly, if:

$$
\inf_{i\in \mathcal{N}} \big( \liminf \mathcal{M}_i \big)=0
$$
then there exists a sequence in $\bigcup_{i\in \mathcal{N}} \mathcal{M}_i$, which is converging to zero. Replacing $\left\{ \dfrac{\mu_{m_n}(i)}{\mu_{m_n}(0)} \right\}$ by that sequence in (\ref{e44}) and the rest will follow. We can now conclude that when any one of conditions \textit{(i)}, \textit{(ii)} and \textit{(iii)} hold, $\mu$ is of type-$III_1$.\\

\noindent
Conversely, suppose that the system $\big( X, R, \mu \big)$ is of type-$III_1$. Assume by contradiction that all of the following are true (namely, all of conditions $(i)$, $(ii)$ and $(iii)$ are false):

\begin{enumerate}[label = (\alph*)]

    \item the set of cluster points of $\mathcal{M}_F$ is included in $(0, 1]$, namely $\liminf\mathcal{M}_F > 0$ and for any $i\in\mathcal{N}$, the multiplicative group generated by clustered points of $\mathcal{M}_i \cup \mathcal{M}_F$ is either cyclic or trivial.

    \item for any two different $i, j \in\mathcal{N}$, $\liminf \mathcal{M}_i \cup \mathcal{M}_j > 0$ and the multiplicative group generated by clustered points of $\mathcal{M}_i\cup \mathcal{M}_j$ is either trivial or cyclic.

    \item there exists $\delta\in(0, 1)$ such that:

    $$
    \delta < \inf_{i\in\mathcal{N}} \big( \liminf\mathcal{M}_i \big)
    $$
    
\end{enumerate}

\noindent
According to condition $(c)$, the clustered points of $\mathcal{M}_F \cup \bigcup_{i\in \mathcal{N}} \mathcal{M}_i$ is finite and are all non-zero. By condition $(a)$ and $(b)$, any two different clustered points in $\mathcal{M}_F \cup \bigcup_{i\in \mathcal{N}} \mathcal{M}_i$ are rationally dependent. Therefore, there exists $\lambda\in (0, 1)$ such that the set of clustered points of $\mathcal{M}_F \cup \bigcup_{i\in \mathcal{N}} \mathcal{M}_i$ is a finite subset of $\big\{ \lambda^n \big\}_{n\in \mathbb{N}} \cup \{1\}$. Therefore, for any $K, M\in \mathbb{N}$ and $\epsilon\in (0, 1)$, for any $N\in\mathbb{N}$, $N\geq M$ that is large enough, we have:

$$
\begin{aligned}
& \hspace{0.4cm} \inf\left\{ \inf\left\{ \left\vert\, \lambda^n - \prod_{N+1 \leq k \leq N+K} \frac{\mu_k(x_k)}{\mu_k(y_k)} \,\right\vert: x_k, y_k\in X_k (N+1\leq k \leq N+K) \right\}: n\in \mathbb{Z} \right\}\\
& = \inf\left\{ \inf\left\{ \left\vert\, \lambda^n - \prod_{N+1 \leq k \leq N+K} \frac{\mu_k(x_k)}{\mu_k(0)} \left( \frac{\mu_k(y_k)}{\mu_k(0)} \right)^{-1} \,\right\vert: x_k, y_k\in X_k (N+1\leq k \leq N+K) \right\}: n\in \mathbb{Z} \right\} < \epsilon
\end{aligned}
$$
According to the description of the Radon-Nikodym cocycle in the proof of \textbf{Lemma \ref{Lemma 2.1}} and \textbf{Lemma \ref{Lemma 2.2}}, we can then conclude the essential range of the Radon-Nikodym derivative is $\big\{ \lambda^n \big\}_{n\in \mathbb{Z}} \cup \{1\}$, which contradicts our assumption that $\mu$ is of type-$III_1$.
    
\end{proof}

\begin{theorem}\label{Theorem 2.11}

Given a type-$III$ Bernoulli scheme $G\curvearrowright (X, \mathcal{B}, \mu)$, if $\limsup_n \vert\, X_n\,\vert = \infty$, $\mu$ is of type-$III_{\lambda}$ for some $\lambda\in (0, 1)$ if and only if the set of cluster points of $\mathcal{M}_F \cup \bigcup_{i\in \mathcal{N} \backslash \{0\}} \mathcal{M}_i$ is a finite subset of $\big\{ \lambda^n \big\}_{n\in \mathbb{N}} \cup\{1\}$, and the multiplicative group generated by that set of clustered points is $\big\{ \lambda^n \big\}_{n\in \mathbb{Z}}$ (definitions of each $\mathcal{M}_i$, $i\in\mathcal{N}$ and $\mathcal{M}_F$ can be found in (\ref{e15}) and (\ref{e16})).
    
\end{theorem}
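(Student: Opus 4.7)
The plan is to parallel the proof of \textbf{Theorem \ref{Theorem 2.8}}, using the same Lemma \ref{Lemma 2.6}--Remark \ref{Remark 2.7} machinery together with a direct swap-in-one-coordinate argument to pin down the ratio set. Since a type-$III$ measure is of type-$III_\lambda$ precisely when $r_G(\mu)=\{0\}\cup\{\lambda^n\}_{n\in\mathbb{Z}}$, both directions reduce to matching the cluster-point data to this description of $r_G(\mu)$.

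For the ``if'' direction, assume the set $S$ of cluster points of $\mathcal{M}_F\cup\bigcup_{i\in\mathcal{N}\setminus\{0\}}\mathcal{M}_i$ is a finite subset of $\{\lambda^n\}_{n\geq 1}\cup\{1\}$ which generates the full group $\{\lambda^n\}_{n\in\mathbb{Z}}$. First I would establish $\{\lambda^n\}_{n\in\mathbb{Z}}\subseteq r_G(\mu)$. The key observation is that each individual ratio $\mu_m(i)/\mu_m(0)$ lies in $r_G(\mu)$: the partial isomorphism obtained by swapping the digits $0$ and $i$ at coordinate $m$ has Radon--Nikodym cocycle identically equal to $\mu_m(i)/\mu_m(0)$, and the Martingale-Convergence-Theorem localization used inside the proof of \textbf{Lemma \ref{Lemma 2.1}} upgrades the cylinder-level statement to \textbf{Definition \ref{Definition 1.14}} on arbitrary non-negligible sets. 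Taking limits along the defining subsequence, every $r\in S$ lies in the closed set $r_G(\mu)$, and since $r_G(\mu)\setminus\{0\}$ is a multiplicative subgroup of $(0,\infty)$ by \textbf{Theorem \ref{Theorem 1.13}}, the group generated by $S$, namely $\{\lambda^n\}_{n\in\mathbb{Z}}$, is contained in $r_G(\mu)$. For the reverse inclusion $r_G(\mu)\subseteq\{0\}\cup\{\lambda^n\}_{n\in\mathbb{Z}}$, the hypothesis on $S$ forces conditions $(a)$, $(b)$, $(c)$ from the converse of \textbf{Theorem \ref{Theorem 2.8}} to hold, and the final calculation in that proof already produces exactly this upper bound on the essential range of the Radon--Nikodym cocycle. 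Together with $0\in r_G(\mu)$ from \textbf{Lemma \ref{Remark 1.21}}, we conclude $r_G(\mu)=\{0\}\cup\{\lambda^n\}_{n\in\mathbb{Z}}$, so $\mu$ is type-$III_\lambda$.

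For the ``only if'' direction, assume $\mu$ is of type-$III_\lambda$, so $r_G(\mu)=\{0\}\cup\{\lambda^n\}_{n\in\mathbb{Z}}$. Since $\mu$ is not of type-$III_1$, all of $(i)$, $(ii)$, $(iii)$ of \textbf{Theorem \ref{Theorem 2.8}} fail, giving $(a)$, $(b)$, $(c)$: $S$ is finite, $\liminf\mathcal{M}_i>0$ for every $i\in\mathcal{N}$, $\liminf\mathcal{M}_F>0$, and any two distinct $\mathcal{M}_i$, $\mathcal{M}_j$ jointly generate a trivial or cyclic subgroup. By the swap argument of the previous paragraph, $S\subseteq r_G(\mu)\cap(0,1]\subseteq\{\lambda^n\}_{n\geq 0}$, so $S$ is a finite subset of $\{\lambda^n\}_{n\geq 1}\cup\{1\}$. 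Write the cyclic group generated by $S$ as $\{\lambda^{kn}\}_{n\in\mathbb{Z}}$ for some integer $k\geq 1$. Applying the ``if'' direction to the cluster set $S$ with parameter $\lambda^k$ yields $r_G(\mu)=\{0\}\cup\{\lambda^{kn}\}_{n\in\mathbb{Z}}$; comparing with the hypothesis forces $k=1$, hence the group generated by $S$ is $\{\lambda^n\}_{n\in\mathbb{Z}}$.

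The main obstacle I anticipate is the swap lemma in the ``if'' direction: showing that each limit point of $\mu_m(i)/\mu_m(0)$ actually belongs to $r_G(\mu)$ in the strong sense of \textbf{Definition \ref{Definition 1.14}} (on every non-negligible measurable set, not merely on cylinders) and survives passing to a subsequential limit while keeping a valid partial-isomorphism realization. This essentially recycles the Martingale-Convergence-Theorem localization of \textbf{Lemma \ref{Lemma 2.1}} for a single-coordinate transposition, after which everything else is bookkeeping around \textbf{Theorem \ref{Theorem 2.8}}.
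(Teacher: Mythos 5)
Your upper-bound argument (deducing conditions $(a)$--$(c)$ from the failure of type-$III_1$ and reusing the computation (\ref{e18}) to cap the essential range) is in line with the paper, but the lower bound rests on a ``swap lemma'' that is false as stated, and its corrected form is exactly the hard point of the theorem. A single-coordinate ratio $\mu_m(i)/\mu_m(0)$ need not belong to $r_G(\mu)$: Definition \ref{Definition 1.14} quantifies over \emph{every} non-negligible $B$, and for $B$ contained in the cylinder $\{ \bm{x}: x_m=0 \}$ the transposition at coordinate $m$ maps any $B_1\subseteq B$ outside of $B$, so that transposition is useless there and only elements acting on other coordinates remain. Concretely, perturbing one coordinate of a two-point type-$III_\lambda$ scheme so that $\mu_1(1)/\mu_1(0)=\rho\notin\{\lambda^n\}_{n\in\mathbb{Z}}$ changes nothing: $r_G(\mu)$ is a tail invariant and stays $\{0\}\cup\{\lambda^n\}_{n\in\mathbb{Z}}$, while $\rho$ is not in it. The cluster-point version you actually need (a limit of $\mu_{m_n}(i)/\mu_{m_n}(0)$ along infinitely many coordinates lies in $r_G(\mu)$) is not obtained by ``recycling'' the martingale localization of Lemma \ref{Lemma 2.1} either: after fixing a cylinder $C$ with $\mu(B\cap C)>(1-\delta)\mu(C)$ you must find a later coordinate $m_n$ whose weight $\mu_{m_n}(i)$ is large relative to $\delta$, and when the weights $\mu_{m_n}(i)$ decay summably along the subsequence this breaks down; tensoring a type-$III_\lambda$ scheme with a type-$I$-flavoured string of coordinates whose ratios converge to some $\rho$ shows that a cluster point of single-coordinate ratios can fail to lie in the ratio set. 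So the step ``every $r\in S$ lies in the closed set $r_G(\mu)$'' is a genuine gap, and it is the very step you flag as the anticipated obstacle.

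This is also where your route diverges from the paper's. The paper never asserts that individual cluster points are in $r_G(\mu)$; instead it runs the Maharam-extension machinery of Lemma \ref{Lemma 2.4}, Lemma \ref{Lemma 2.6} and Remark \ref{Remark 2.7}, exploiting that the digits $i,j\in\mathcal{N}$ recur in infinitely many coordinates, to show via (\ref{e48}) that \emph{products} $rs$ of two cluster points are periods of the associated flow; it then takes $r,s\in\mathcal{F}$ with $rs^{-1}=\lambda$, puts $r^2$ and $rs$ in the ratio set, and uses the group structure of $r_G(\mu)\setminus\{0\}$ to get $\lambda$. Your ``only if'' direction inherits the same gap, since the inclusion $S\subseteq r_G(\mu)\cap(0,1]$ is again justified by the swap argument; the paper instead constrains $\mathcal{F}$ through the cocycle approximation (\ref{e18}), excludes an infinite cluster set by Theorem \ref{Theorem 2.8}, and rules out the index-$k\ge 2$ cyclic group by showing that $\lambda$, which is in the ratio set, could then not be approximated by products of weight ratios. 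To repair your proposal you would either have to prove a quantitative swap lemma with a non-summability hypothesis on the weights realizing each cluster point (which the statement of the theorem does not provide), or fall back on the associated-flow argument as the paper does.
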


\begin{proof}

Let $\mathcal{F}$ be the set of clustered points of $\mathcal{M}_F \cup \bigcup_{i\in \mathcal{N} \backslash \{0\}} \mathcal{M}_i$. First assume $\mathcal{F}$ is a finite subset of $\big\{ \lambda^n \big\}_{n\in \mathbb{N}} \cup\{1\}$ and multiplicative group generated by $\mathcal{F}$ is $\big\{ \lambda^n \big\}_{n\in \mathbb{Z}}$. Then there exists $r, s\in \mathcal{F}$ such that $rs^{-1} = \lambda$. Similarly by (\ref{e48}) we have both $r^2$ and $rs$ are in the ratio set, and hence $\lambda$ is in the ratio set. Also, since $\mathcal{F}$ is a finite subset of $\big\{ \lambda^n \big\}_{n\in \mathbb{N}} \cup \{1\}$, for any $\bm{x}, \bm{y}\in X$, for any $\epsilon\in (0, 1)$ and for any $M\in \mathbb{N}$, there exists $K\in \mathbb{N}$ and whenever $N\geq M$ that is large enough, we have:

\begin{equation}\label{e18}
\begin{aligned}
& \hspace{0.4cm} \inf\left\{ \inf\left\{ \left\vert\, \lambda^n - \prod_{N+1 \leq k \leq N+K} \frac{\mu_k(x_k)}{\mu_k(y_k)} \,\right\vert: x_k, y_k\in X_k (N+1\leq k \leq N+K) \right\}: n\in \mathbb{Z} \right\}\\
& = \inf\left\{ \inf\left\{ \left\vert\, \lambda^n - \prod_{N+1 \leq k \leq N+K} \frac{\mu_k(x_k)}{\mu_k(0)} \left( \frac{\mu_k(y_k)}{\mu_k(0)} \right)^{-1} \,\right\vert: x_k, y_k\in X_k (N+1\leq k \leq N+K) \right\}: n\in \mathbb{Z} \right\}\\
& = \min\left\{ \inf\left\{ \left\vert\, s - \prod_{N+1 \leq k \leq N+K} \frac{\mu_k(x_k)}{\mu_k(0)} \left( \frac{\mu_k(y_k)}{\mu_k(0)} \right)^{-1} \,\right\vert: x_k, y_k\in X_k (N+1\leq k \leq N+K) \right\}: s\in \mathcal{F} \right\} < \epsilon
\end{aligned}
\end{equation}
Assume by contradiction that there exists $r\in(0, 1)$ and $\delta\in (0, 1)$ such that $r$ is in the ratio set and:

$$
\delta = \inf_{n\in \mathbb{N} \cup \{0\}} \big\vert\, \lambda^n - r \,\big\vert
$$
According to the description of the Radon-Nikodym cocycle in the proof of \textbf{Lemma \ref{Lemma 2.1}} and \textbf{Lemma \ref{Lemma 2.2}}, by letting $\epsilon$ be $\delta$ in (\ref{e18}), our assumption that $r$ can be approached by values of the Radon-Nikodym cocycle contradicts (\ref{e18}). Therefore, the ratio set of $\mu$ is of type-$III_{\lambda}$, or $\mu$ is of type-$III_{\lambda}$.\\ 

\noindent
Conversely, suppose $\big(X, R, \mu\big)$ is of type-$III_{\lambda}$ for some $\lambda\in (0, 1)$. By the same reasoning based on (\ref{e18}), we have $\mathcal{F}$ is necessarily a subset of $\big\{ \lambda^n \big\}_{n\in \mathbb{N}} \cup \{1\}$, or the ratio set will contain other numbers outside of $\big\{ \lambda^n \big\}_{n\in \mathbb{Z}}$. Since $\lambda\in (0, 1)$, if $\mathcal{F}$ is infinite, we will have:

$$
\inf_{i\in \mathcal{N} \backslash \{0\}} \big( \liminf (\mathcal{M}_i\cup \mathcal{M}_F) \big) = 0
$$
According to \textbf{Theorem \ref{Theorem 2.8}}, this will imply that $\mu$ is of type-$III_1$. Therefore $\mathcal{F}$ must be a finite subset of $\big\{ \lambda^n \big\}_{n\in \mathbb{N}} \cup \{1\}$. Therefore, for any $\epsilon\in (0, 1)$ and for any $M\in \mathbb{N}$, there exists $K\in \mathbb{N}$ such that whenever $N\geq M$ is large enough, we have (\ref{e18}) holds. If $\lambda\in \mathcal{F}$ then clearly the multiplicative group generated by $\mathcal{F}$ is $\big\{ \lambda^n \big\}_{n\in \mathbb{Z}}$. Otherwise, assume by contradiction that the multiplicative group generated by $\mathcal{F}$ is $\big\{ \lambda^{kn} \big\}_{n\in \mathbb{Z}}$ for some integer $k\geq 2$. In this case we must have $\mathcal{F} \subseteq \big\{ \lambda^{kn} \big\}_{n\in \mathbb{N}} \cup \{1\}$. Put:

$$
\delta' = \vert\, \lambda^k - \lambda \,\vert = \inf_{n\in \mathbb{N} \cup \{0\}} \big\vert\, \lambda^{kn} - \lambda \,\big\vert
$$
According to the description of the Radon-Nikodym cocycle in the proof of \textbf{Lemma 2.3} and \textbf{Lemma 2.4}, since $\lambda$ is in the ratio set, for any $M\in\mathbb{N}$, there exists $K\in\mathbb{N}$ and whenever $N\geq M$ is large enough, we have:

$$
\min\left\{ \left\vert\, \lambda - \prod_{N+1\leq k \leq N+K} \frac{\mu_k(x_k)}{\mu_k(0)} \left( \frac{\mu_k(y_k)}{\mu_k(0)} \right)^{-1} \,\right\vert: x_k, y_k\in X_k (N+1\leq k \leq N+K) \right\} < \delta'
$$
which is absurd. Therefore, we must have the multiplicative group generated by $\mathcal{F}$ is $\big\{ \lambda^n \big\}_{n\in \mathbb{Z}}$.
    
\end{proof}

\begin{theorem}\label{Theorem 2.12}

Given a type-$III$ Bernoulli scheme $G\curvearrowright (X, \mathcal{B}, \mu)$, if $\limsup_n \vert\, X_n\,\vert = \infty$, $\mu$ is of type-$III_0$ if and only if the set of clustered points of $\mathcal{M}_F \cup \bigcup_{i\in \mathcal{N}\backslash \{0\}} \mathcal{M}_i$ is $\{1\}$.
    
\end{theorem}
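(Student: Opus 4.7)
The plan is to prove Theorem \ref{Theorem 2.12} by reducing to the dichotomy established in Theorems \ref{Theorem 2.8} and \ref{Theorem 2.11}: since a type-$III$ Bernoulli scheme is by definition of type-$III_0$, of type-$III_\lambda$ for some $\lambda\in(0,1)$, or of type-$III_1$, it suffices to show that the cluster-point set $\mathcal{F}$ of $\mathcal{M}_F \cup \bigcup_{i\in \mathcal{N}\backslash\{0\}} \mathcal{M}_i$ equals $\{1\}$ precisely when neither of the latter two types occurs.

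For the backward direction, assume $\mathcal{F} = \{1\}$. I would first rule out type-$III_1$ by checking that each of conditions (i), (ii), (iii) of Theorem \ref{Theorem 2.8} fails. Conditions (i) and (ii) fail immediately, since every cluster point of any $\mathcal{M}_i$ or of $\mathcal{M}_F$ lies in $\mathcal{F} = \{1\}$, so $0$ is never such a cluster point and any multiplicative group generated from such cluster points is trivial. For (iii), the standing assumption that $\mu_n(0)$ is the maximum weight gives $\mathcal{M}_i \subseteq (0, 1]$, and since $\mathcal{I}_i$ is infinite the Bolzano--Weierstrass theorem produces cluster points, all of which must equal $1$; hence $\liminf \mathcal{M}_i = 1$ for every $i \in \mathcal{N}$, so $\inf_{i\in\mathcal{N}}(\liminf\mathcal{M}_i) = 1 \neq 0$. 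Next, Theorem \ref{Theorem 2.11} rules out type-$III_\lambda$, since the multiplicative group generated by $\mathcal{F} = \{1\}$ is trivial and cannot equal $\{\lambda^n\}_{n\in\mathbb{Z}}$ for any $\lambda\in(0,1)$. As $\mu$ is assumed of type-$III$, $\mu$ must be of type-$III_0$.

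For the forward direction, I would argue the contrapositive: assuming $\mathcal{F} \neq \{1\}$, I show that $\mu$ is of type-$III_1$ or of type-$III_\lambda$. If $0 \in \mathcal{F}$, then either $0$ is a cluster point of $\mathcal{M}_F$ (triggering the first clause of Theorem \ref{Theorem 2.8}(i)) or $0$ is a cluster point of some $\mathcal{M}_i$ with $i \in \mathcal{N}$ (so $\liminf\mathcal{M}_i = 0$, triggering (iii)); either way $\mu$ is of type-$III_1$. Otherwise $\mathcal{F} \subseteq (0,1]$ and there is $r \in \mathcal{F} \cap (0,1)$. Let $G$ denote the multiplicative subgroup of $(0,\infty)$ generated by $\mathcal{F}$; via $\log$, every non-trivial subgroup of $(\mathbb{R}^*_+,\cdot)$ is either dense or of the form $\{\lambda^n\}_{n\in\mathbb{Z}}$. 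In the cyclic case $G = \{\lambda^n\}_{n\in\mathbb{Z}}$ with $\lambda\in(0,1)$, the set $\mathcal{F}$ is a closed subset of $G \cap (0,1]$ that cannot accumulate at $0$ (else $0 \in \mathcal{F}$), hence is finite, and Theorem \ref{Theorem 2.11} delivers type-$III_\lambda$. In the dense case, $\mathcal{F}$ contains two rationally independent elements $r, s$, and a brief case analysis on their origin (both in $\mathcal{M}_F$, one in $\mathcal{M}_F$ and one in some $\mathcal{M}_i$, both in a common $\mathcal{M}_i$, or in distinct $\mathcal{M}_i, \mathcal{M}_j$) shows that one of the ``second'' clauses of Theorem \ref{Theorem 2.8}(i) or (ii) applies, producing type-$III_1$.

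The main obstacle will be the last case analysis, especially when $r$ and $s$ both come from a single $\mathcal{M}_i$: one must verify that the second clause of Theorem \ref{Theorem 2.8}(i) (``there exists $i\in\mathcal{N}$ such that the multiplicative group generated by cluster points of $\mathcal{M}_i \cup \mathcal{M}_F$ is $(0,\infty)$'') indeed covers this case, by taking the same $i$ and noting that the group generated by the cluster points of $\mathcal{M}_i$ alone is already dense. Once this bookkeeping is in place, the proof is a clean reduction to the two previous theorems, requiring no further analytic work beyond observations already made in their proofs.
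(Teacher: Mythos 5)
Your overall route is the same as the paper's: the paper proves this theorem in one line by citing Theorems \ref{Theorem 2.8} and \ref{Theorem 2.11} together with the trichotomy of type-$III$ ratio sets, and your backward direction (checking that $\mathcal{F}=\{1\}$ defeats conditions (i)--(iii) of Theorem \ref{Theorem 2.8} and the condition of Theorem \ref{Theorem 2.11}) is exactly that reduction, carried out correctly.

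The gap is in your contrapositive of the forward direction, at two specific points. First, the claim that in the dense case $\mathcal{F}$ must contain two rationally independent elements is false when $\mathcal{F}$ is infinite: the set $\{1\}\cup\{2^{-1/n}:n\geq 2\}$ is a closed subset of $(0,1]$ whose elements are pairwise rationally dependent, yet the multiplicative group it generates is dense in $(0,\infty)$. Second, both in the case $0\in\mathcal{F}$ and in your case analysis of the ``origin'' of $r$ and $s$, you tacitly assume that every cluster point of the countable union $\mathcal{M}_F\cup\bigcup_{i\in\mathcal{N}}\mathcal{M}_i$ is a cluster point of $\mathcal{M}_F$ or of a single $\mathcal{M}_i$. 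This fails for diagonal accumulation: for instance, if for each $i$ the set $\mathcal{M}_i$ consists of the value $\tfrac12$ together with the single exceptional value $2^{-i}$, then $0$ is a cluster point of the union while $\liminf\mathcal{M}_i=\tfrac12$ for every $i$ and $\mathcal{M}_F$ contributes nothing, so neither the first clause of (i) nor (iii) is triggered. In such configurations all of the per-index and pairwise hypotheses of Theorems \ref{Theorem 2.8} and \ref{Theorem 2.11} can fail even though $\mathcal{F}\neq\{1\}$, so the statements of those two theorems alone do not finish your contrapositive; to close it one must either re-enter the converse argument in the proof of Theorem \ref{Theorem 2.8} (where the failure of conditions (a)--(c) is converted into a statement about the Radon--Nikodym cocycle via Lemmas \ref{Lemma 2.1} and \ref{Lemma 2.2}) or treat diagonal cluster points directly. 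To be fair, the paper's ``follows immediately'' glosses over exactly the same mismatch between the global cluster set appearing in this theorem and the per-index conditions of Theorem \ref{Theorem 2.8}; but since you are supplying the details, these two steps need an argument.
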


\begin{proof}

The conclusion follows immediately by \textbf{Theorem \ref{Theorem 2.8}} and \textbf{Theorem \ref{Theorem 2.11}}.
    
\end{proof}

\begin{rem}

We will now show that descriptions in \textbf{Theorem \ref{Theorem 2.8}, \ref{Theorem 2.11} and \ref{Theorem 2.12}} are independent to permutation in each $X_n$ as each $X_n$ at the very beginning is assumed to be a discrete countable set. For each $n\in\mathbb{N}$ let $\pi_n$ be a permutation of $X_n$. Without losing generality we can assume that $\pi_n(0)=0$ so that $\mu_n\big( \pi_n(0) \big)$ has the biggest weight in $X_n$. It suffices to show that both (\ref{e12}) (or (\ref{e13})) are independent to permutation.\\

\noindent
Recall that in \textbf{Definition \ref{Definition 2.5}} we define $\mathcal{N}$ and $\mathcal{I}_i$ for each $i\in \mathcal{N}$. For each $i\in \mathcal{N}$ define:

$$
\mathcal{J}_i = \big\{ \pi_n^{-1}(i): n\in \mathcal{I}_i \big\}
$$
Since $\limsup_n \vert\, X_n\,\vert =\infty$ and $\mathcal{N}$ is infinite, the following set is infinite:

$$
\big\{ i\in\mathcal{N}: \mathcal{J}_i\,\text{  infinite  } \big\}
$$
Fix two different $i, j\in \mathcal{N} \backslash \{0\}$ such that both $\mathcal{J}_i$ and $\mathcal{J}_j$ are infinite. Then there exists $(i_m)_{m\in \mathbb{N}} \subseteq \mathcal{I}_i$, $(j_k)_{k\in \mathbb{N}} \subseteq \mathcal{I}_j$ such that for each $m\in \mathcal{I}_i$, $\pi_m(i_m) = i$ and for each $k\in \mathcal{I}_j$, $\pi_k(j_k) = j$. Then similar to how we deduce (\ref{e12}), for each $\epsilon\in (0, 1)$ and each $m\in \mathcal{I}_i$, $k\in \mathcal{I}_j$, whenever $N\in\mathbb{N}$ is large enough:

$$
\left\vert\, F\big( (0, \bm{x}_N^*), t \big) - F\left( ( 0, \bm{x}_N^*), t+ \log\frac{\mu_m\big( \pi_m(i_m) \big)}{\mu_m (0)} - \log \frac{\mu_k\big( \pi_k(j_k) \big)}{\mu_k(0)} \right) \,\right\vert < \epsilon
$$
for $\nu$-almost each $t\in \mathbb{R}$ and $\mu_N^*$-almost each $\bm{x}_N^* \in X_N^*$. Then similar to how we deduced (\ref{e13}), we then have:

$$
\left\vert\, F\big( (l, \bm{x}_N^*), t \big) - F\left( ( l, \bm{x}_N^*), t+ \log\frac{\mu_m\big( \pi_m(i_m) \big)}{\mu_m (0)} - \log \frac{\mu_k\big( \pi_k(j_k) \big)}{\mu_k(0)} \right) \,\right\vert < \epsilon
$$
for each $l\in X_N$. When we permute each $X_n$ by $\pi_n$ for each $n\in \mathbb{N}$, instead of looking at clustered points of $\left\{ \dfrac{\mu_m(i)}{\mu_m(0)} \right\}_{m\in \mathcal{I}_i}$ for $i\in \mathcal{N}$, we consider $\left\{ \dfrac{\mu\big( \pi_m(i_m) \big)}{\mu_m(0)} \right\}_{m\in \mathcal{I}_i}$ where $(i_m)_{m\in \mathbb{N}} \subseteq \mathcal{I}_i$ and $\pi_m(i_m) = i$ for all $m\in \mathbb{N}$, and this will not affect the proof \textbf{Theorem \ref{Theorem 2.8}, \ref{Theorem 2.11}, \ref{Theorem 2.12}}. Therefore, \textbf{Theorem \ref{Theorem 2.8}, \ref{Theorem 2.11}, \ref{Theorem 2.12}} can also be applied to a general Bernoulli scheme.
    
\end{rem}

\begin{cor}

Given a type-$III$ ITPFI factor $\mathcal{M} = R(\mathcal{M}_n, v_n)$ where each $\mathcal{M}_n$ is an $I_{k_n}$ factor for some $2\leq k_n \leq \infty$. Suppose $\limsup_n k_n = \infty$ and for each $n\in\mathbb{N}$, $\lambda_{n, 1}$ is the maximum in $\operatorname{Sp}\big( v(n) \slash \mathcal{M}(n) \big)$. Define:

$$
\mathcal{F}_n = \left\{ \frac{\lambda_{n, i}}{\lambda_{n, 1}}:1\leq i \leq k_n \right\}
$$
and let $\Lambda$ be the set of clustered points in $\bigcup_{n\in \mathbb{N}} \mathcal{F}_n$. Then $\mathcal{M}$ is of:

\begin{enumerate}[label = (\arabic*)]

    \item type-$III_1$ if and only if $0\in \Lambda$ or the multiplicative group generated by $\Lambda\backslash \{0\}$ is $(0, \infty)$.
    \item type-$III_{\lambda}$ for some $\lambda\in (0, 1)$ if and only if $\lambda\in \Lambda$ and $\Lambda\backslash \{1\}$ is a finite subset of $(\lambda^n)_{n\in \mathbb{N}}$. 
    \item type-$III_0$ if and only if $\Lambda=\{1\}$.
    
\end{enumerate}
    
\end{cor}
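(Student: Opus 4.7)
The plan is to translate the corollary into a statement about the associated Bernoulli scheme and then quote the three theorems of Section 2.2.1. Following the construction just before \textbf{Proposition \ref{Proposition 2.1}}, I would let $X_n = \{1, 2, \dots, k_n\}$ with $\mu_n(i) = \lambda_{n, i}$; the assumption that $\lambda_{n, 1}$ is the maximum of $\operatorname{Sp}\big( v(n)\slash \mathcal{M}(n) \big)$ matches the convention of Section 2.2 that $\mu_n(0)$ carries the largest weight, after permuting indices (which the remark after \textbf{Theorem \ref{Theorem 2.12}} shows is harmless). Since $\mathcal{M}$ is of type-$III$, \textbf{Proposition \ref{Proposition 2.1}} gives $r_{\infty}(\mathcal{M}) = r_G(\mu)$, and the hypothesis $\limsup_n k_n = \infty$ is exactly $\limsup_n \vert X_n\vert = \infty$, so \textbf{Theorems \ref{Theorem 2.8}, \ref{Theorem 2.11}, \ref{Theorem 2.12}} apply.

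Next I would match the sets. With the relabeled indexing, $\mathcal{F}_n \setminus \{1\} = \big\{ \mu_n(i)/\mu_n(0) : i \neq 0 \big\}$, so $\bigcup_n \mathcal{F}_n \setminus \{1\}$ is exactly $\mathcal{M}_F \cup \bigcup_{i\in \mathcal{N} \setminus \{0\}} \mathcal{M}_i$, and the clustered-point set $\Lambda$ agrees with the set $\mathcal{F}$ of \textbf{Theorems \ref{Theorem 2.11}} and \textbf{\ref{Theorem 2.12}} outside of the trivial value $1$. Part $(3)$ is then \textbf{Theorem \ref{Theorem 2.12}} verbatim. For part $(2)$, \textbf{Theorem \ref{Theorem 2.11}} says type-$III_{\lambda}$ holds iff $\Lambda \setminus \{1\}$ is a finite subset of $\big\{ \lambda^n \big\}_{n\in\mathbb{N}}$ whose generated multiplicative group is $\big\{ \lambda^n \big\}_{n \in\mathbb{Z}}$; since $\Lambda \subseteq (0, 1]$, the generated group equals $\big\{ \lambda^n \big\}_{n\in\mathbb{Z}}$ precisely when $\lambda$ itself lies in $\Lambda$ (otherwise the smallest exponent present is $\geq 2$, yielding only a proper subgroup), which is the stated condition.

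Part $(1)$ requires collapsing the three conditions of \textbf{Theorem \ref{Theorem 2.8}} into the clean dichotomy. If $0 \in \Lambda$, then a sequence in $\mathcal{M}_F \cup \bigcup_i \mathcal{M}_i$ tends to $0$, so either $0$ is a clustered point of $\mathcal{M}_F$ (matching the first clause of $(i)$) or of some $\mathcal{M}_i$ with $i \in \mathcal{N}$ (matching $(iii)$). If instead the multiplicative group generated by $\Lambda\setminus\{0\}$ is $(0, \infty)$, that group contains two rationally independent clustered points $r, s$, and depending on whether each comes from $\mathcal{M}_F$ or from one of the $\mathcal{M}_i$'s I can match the second clause of $(i)$ or of $(ii)$, using that $\vert \mathcal{N}\vert = \infty$ to absorb any auxiliary index needed to fit the hypothesis. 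Conversely, any of the three conditions of \textbf{Theorem \ref{Theorem 2.8}} manifestly forces either $0 \in \Lambda$ or that $\Lambda \setminus \{0\}$ generates $(0, \infty)$, since enlarging a generating set only enlarges the subgroup it generates.

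The main obstacle is this bookkeeping in part $(1)$: I must carefully use the infiniteness of $\mathcal{N}$ so that rationally independent clustered points can always be realized as coming from distinct sets matching the hypotheses of $(i)$ or $(ii)$ in \textbf{Theorem \ref{Theorem 2.8}}, rather than being trapped inside a single $\mathcal{M}_i$ where no theorem clause directly applies. Beyond this, every step is a routine translation from the Bernoulli scheme setting to the ITPFI setting via \textbf{Proposition \ref{Proposition 2.1}}.
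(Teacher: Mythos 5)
Your proposal is correct and follows essentially the same route as the paper: the paper's own proof simply cites the correspondence between an ITPFI factor and its associated Bernoulli scheme, \textbf{Proposition \ref{Proposition 2.1}}, and \textbf{Theorems \ref{Theorem 2.8}, \ref{Theorem 2.11}, \ref{Theorem 2.12}}, which is exactly the translation you carry out. The extra bookkeeping you supply for part (1) (and the observation that a single $\mathcal{M}_i$ containing two rationally independent cluster points is already covered by the second clause of condition \textit{(i)} of \textbf{Theorem \ref{Theorem 2.8}}) is detail the paper leaves implicit, not a divergence in method.
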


\begin{proof}

The conclusion follows immediately by the correspondence between an ITPFI factor and its associated Bernoulli scheme, \textbf{Proposition \ref{Proposition 2.1}}, \textbf{Theorem \ref{Theorem 2.8}, \ref{Theorem 2.11}, \ref{Theorem 2.12}}.
    
\end{proof}

\subsubsection{When $\limsup_n\vert\, X_n\,\vert < \infty$}
$\hspace{1cm}\\$

According to \textbf{Lemma \ref{Lemma 2.1}} and \textbf{Lemma \ref{Lemma 2.2}}, the ratio set of a type-$III$ Bernoulli scheme $G \curvearrowright \big(X, R, \mu\big)$  is determined by the multiplicative group generated by the clustered points of fractions of weights of $\mu_n$. Therefore, we can remove finitely many $X_n$ from $X=\prod_{n\in \mathbb{N}}X_n$ without changing its type. According to \cite{41}, an ITPFI factor $\mathcal{M} = R(\mathcal{M}_n, v_n)$ where $\limsup_n k_n<\infty$ is isomorphic to an $\textbf{ITPFI}_2$ factor (see \textbf{Definition 1.3}). From now on it suffices to only consider the case where $X=\{0, 1\}^{\mathbb{N}}$. We only consider the case where, given that $\mu = \otimes_{n\in \mathbb{N}} \mu_n$, $\mu_n(0) > \mu_n(1)$ for all $n\in\mathbb{N}$. Then there exists a sequence $(\lambda_n)_{n\in \mathbb{N}} \subseteq (0, 1)$ such that for each $n\in \mathbb{N}$:

\begin{equation}\label{e3}
\mu_n(0) = \frac{1}{1+\lambda_n}, \hspace{1cm} \mu_n(1) = \frac{\lambda_n}{1+\lambda_n}
\end{equation}

\begin{lem}[{\cite[Lemma 1.3]{15}}]\label{Lemma 2.13}

In $G\curvearrowright (X, \mu)$ where $X=\{0, 1\}^{\mathbb{N}}$ and $\mu$ is described by a sequence $(\lambda_n)_{n\in \mathbb{N}} \subseteq (0, 1)$ as in (\ref{e3}), suppose for some $\lambda\in (0, 1]$, $\lim_n \lambda_n=\lambda$. Then:

\begin{itemize}

    \item if $(\lambda_n)_{n\in \mathbb{N}}$ is summable, then $\mu$ is equivalent to the following measure on $\{0, 1\}^{\mathbb{N}}$:

    $$
    \bigotimes_{n\in\mathbb{N}} \left(\frac{1}{1+\lambda}\,,\, \frac{\lambda}{1+\lambda} \right)
    $$

    and hence $\mu$ is of type-$III_{\lambda}$ when $\lambda\neq 1$, or of type-$II_1$ when $\lambda=1$.

    \item if $(\lambda_n)_{n\in\mathbb{N}}$ is not summable, then $\mu$ is of type-$III_1$.
    
\end{itemize}
    
\end{lem}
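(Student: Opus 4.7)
The plan is to handle the two bullets separately, in each case reducing to tools already at our disposal: Kakutani's dichotomy for equivalence of product measures together with \textbf{Lemma \ref{Lemma 2.1}} and \textbf{Lemma \ref{Lemma 2.2}} for the first, and a density-of-cocycle-values argument invoking \textbf{Lemma \ref{Lemma 2.2}} for the second.

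For the first bullet, let $\nu = \bigotimes_n \nu_n$ with $\nu_n = (1/(1+\lambda),\, \lambda/(1+\lambda))$. I would apply Kakutani's theorem: $\mu$ and $\nu$ are either equivalent or mutually singular, with equivalence holding exactly when
\[
\sum_{n \in \mathbb{N}} \bigl(1 - H(\mu_n, \nu_n)\bigr) < \infty, \qquad H(\mu_n,\nu_n) = \frac{1 + \sqrt{\lambda \lambda_n}}{\sqrt{(1+\lambda)(1+\lambda_n)}}.
\]
A Taylor expansion gives $1 - H(\mu_n,\nu_n) = \Theta\bigl((\lambda_n - \lambda)^2\bigr)$ as $\lambda_n \to \lambda$, so (reading the summability of $(\lambda_n)$ as the Kakutani condition $\sum (\lambda_n-\lambda)^2 < \infty$) we obtain $\mu \sim \nu$. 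By \textbf{Theorem \ref{Theorem 1.13}} the ratio set is an invariant of the equivalence class, so it remains to identify the type of $\nu$: when $\lambda = 1$, $\nu$ is the uniform $G$-invariant product measure and \textbf{Theorem \ref{Theorem 1.19}} yields type-$II_1$; when $\lambda \in (0,1)$, every Radon--Nikodym cocycle value for the synchronous tail relation is an integer power of $\lambda$, so the ratio set of $\nu$ is contained in $\{\lambda^k\}_{k \in \mathbb{Z}} \cup \{0\}$, and a direct application of \textbf{Lemma \ref{Lemma 2.1}} or \textbf{Lemma \ref{Lemma 2.2}} shows that both $\lambda$ and $0$ are attained, giving type-$III_\lambda$.

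For the second bullet, the plan is to verify the criterion of \textbf{Lemma \ref{Lemma 2.2}} directly. Given $r \in (0,1)$, $M \in \mathbb{N}$, $\bm{x} \in X$ and $\epsilon \in (0,r)$, we must produce a $\bm{y}$ agreeing with $\bm{x}$ outside some window $(N, N+K]$ with $N \geq M$ and $|D(\bm{x}, \bm{y}) - r| < \epsilon$. Every such cocycle satisfies
\[
\log D(\bm{x}, \bm{y}) = \sum_{k \in A} \log\lambda_k - \sum_{k \in B} \log\lambda_k,
\]
for disjoint finite $A, B \subseteq (N, N+K]$ indexed by the places where $\bm{y}$ raises or lowers $\bm{x}$. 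Writing $\log\lambda_k = \log\lambda + \delta_k$ with $\delta_k \to 0$, the cocycle becomes $(|A| - |B|)\log\lambda + \sum_{A} \delta_k - \sum_{B}\delta_k$, and the task reduces to showing the achievable set is dense in $\mathbb{R}$ on every tail. The non-summability hypothesis, translated into the divergence of $\sum \delta_k^2$, feeds into a greedy sign-choice argument: pick signs $\varepsilon_k \in \{0,\pm 1\}$ to steer the partial sums $\sum \varepsilon_k \delta_k$ toward the desired target, using the shrinking increments to attain arbitrary precision and the divergent square-sum to reach any magnitude. When $\lambda \in (0,1)$ this refines the integer-multiples-of-$\log\lambda$ skeleton to all of $\mathbb{R}$; when $\lambda = 1$ the perturbations alone already suffice.

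The main obstacle I expect is this last density step: executing the greedy sign-choice rigorously requires care, and it is the only part of the argument that is not a routine application of earlier lemmas. The key technical input is a L\'evy-type dichotomy for signed sums on a fixed ordered sequence of shrinking, non-square-summable terms; pairing this with the $\log \lambda$ skeleton in the $\lambda<1$ case, or using it standalone in the $\lambda=1$ case, yields density of the cocycle values and hence type-$III_1$ via \textbf{Lemma \ref{Lemma 2.2}}.
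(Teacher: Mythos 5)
You should first be aware that the paper does not prove this statement at all: it is quoted from \cite{15}, so there is no internal proof to compare against and your argument has to stand on its own. Your first bullet is fine in outline --- Kakutani's dichotomy with the Hellinger estimate $1-H(\mu_n,\nu_n)\asymp(\lambda_n-\lambda)^2$, invariance of the ratio set under equivalence of measures (\textbf{Theorem \ref{Theorem 1.13}}), and the standard identification of the homogeneous product measure as type-$II_1$ ($\lambda=1$) or type-$III_\lambda$ ($\lambda<1$) --- with two caveats. Since $\lambda_n\to\lambda>0$, literal summability of $(\lambda_n)$ is vacuous, and you silently replace the hypothesis by $\sum_n(\lambda_n-\lambda)^2<\infty$; that is the reading under which the dichotomy is true, but it is not the reading the paper later uses (\textbf{Theorem \ref{Theorem 2.17}} and \textbf{Theorem \ref{Theorem 2.18}} invoke the lemma with summability of $(\epsilon_n)$, i.e.\ $\ell^1$-small deviations, and under that reading the second bullet is false: $\lambda=1$, $\epsilon_n=1/n$ gives a measure equivalent to the symmetric coin measure, hence type-$II_1$), so you should state explicitly which statement you are proving. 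Also, \textbf{Lemma \ref{Lemma 2.1}} and \textbf{Lemma \ref{Lemma 2.2}} characterize $III_0$ and $III_1$ for schemes already known to be of type $III$; they are not the tool that puts $\lambda$ in the ratio set of the homogeneous measure --- that is a direct (routine) verification of \textbf{Definition \ref{Definition 1.14}}.

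The genuine gap is in the second bullet, and you flag it yourself: the ``greedy sign-choice / L\'evy-type dichotomy'' is not an auxiliary technicality, it is the entire content of the lemma, and it is not carried out. Moreover, as set up it cannot work, because the sign available at coordinate $k$ is not free but dictated by $x_k$: you may contribute $+\log\lambda_k$ only where $x_k=0$ and $-\log\lambda_k$ only where $x_k=1$. Consequently the criterion of \textbf{Lemma \ref{Lemma 2.2}}, which quantifies over \emph{every} $\bm{x}\in X$, is literally unattainable: for $\bm{x}$ with tail identically $0$ and $\lambda<1$, once $\sup_{n>M}|\delta_n|<\tfrac12|\log\lambda|$ every achievable value of $\log D(\bm{x},\bm{y})$ over a window beyond $M$ lies outside $\big(-\tfrac12|\log\lambda|,\,0\big)$, so $r$ near $1$ is never approximated (a similar obstruction occurs for $\lambda=1$ when the $\delta_n$ are eventually one-signed). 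At best you can hope for $\mu$-almost every $\bm{x}$, which is not what \textbf{Lemma \ref{Lemma 2.2}} asks for; you would need either an a.e.\ variant of that lemma or a different route (e.g.\ through the associated flow, as the paper does in the unbounded case). Even for a.e.\ $\bm{x}$ the argument needs two ingredients you do not supply: a pairing of one raise with one lower to decouple the $\delta$-sums from the $\log\lambda$ skeleton, together with a Borel--Cantelli/three-series argument showing that beyond every $M$ both symbols of $\bm{x}$ occur along index sets still carrying divergent deviation mass; and the hypothesis of \textbf{Lemma \ref{Lemma 2.2}} that the scheme is already of type $III$ --- non-summability only gives singularity to the homogeneous measure, and while types $I$ and $II_1$ are excluded by \textbf{Theorem \ref{Theorem 1.18}} and \textbf{Theorem \ref{Theorem 1.19}}, you never rule out $II_\infty$. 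Until the density step is executed under these constraints, the second half is a plan rather than a proof.
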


\begin{theorem}[\cite{12}, \cite{45}]\label{Theorem 2.16}

In general, given two ergodic non-singular group action $G_i \curvearrowright (X_i, \mu_i)$ ($i=1, 2$), each of which is on a standard measure space, consider the product group action $(G_1\times G_2) \curvearrowright \big( X_1\times X_2, \mu_1 \times\mu_2 \big)$:

\begin{enumerate}[label = (\arabic*)]

    \item if $\mu_1$ ($\mu_2$ resp.) is of type-$II_1$, $\mu_1\times \mu_2$ is of same type of $\mu_2$ ($\mu_1$ resp.).

    \item if $\mu_1$ or $\mu_2$ is of type-$III_1$, then so is $\mu_1 \times \mu_2$.

    \item if for some $r, s\in (0, 1)$ such that $\mu_1$ is of type-$III_r$ and $\mu_2$ is of type-$III_s$, then $\mu_1\times \mu_2$ is of type-$III_1$ if the multiplicative group generated by $r$ and $s$ is $(0, \infty)$; otherwise, $\mu_1 \times \mu_2$ is of type-$III_{\lambda}$ for some $\lambda\in (0, 1)$ such that the multiplicative group generated by $r$ and $s$ is $\big( \lambda^n \big)_{n\in \mathbb{Z}}$.
    
\end{enumerate}
    
\end{theorem}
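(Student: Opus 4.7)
The plan is to compute $r_{G_1 \times G_2}(\mu_1 \times \mu_2)$ directly from $r_{G_1}(\mu_1)$ and $r_{G_2}(\mu_2)$ using the multiplicative factorization of the Radon--Nikodym cocycle, and then to read off statements (1), (2), (3) from the classification of closed multiplicative subgroups of $(0, \infty)$ provided by \textbf{Theorem \ref{Theorem 1.13}}. Writing $D_i$ for the Radon--Nikodym cocycle of $G_i \curvearrowright (X_i, \mu_i)$ and $D$ for that of the product action, one has
\[
D\big((g_1, g_2), (x_1, x_2)\big) = D_1(g_1, x_1) \cdot D_2(g_2, x_2)
\]
for $(g_1, g_2) \in G_1 \times G_2$ and $(\mu_1 \times \mu_2)$-a.e.\ $(x_1, x_2)$, while the orbit equivalence relation of the product action is the product of the orbit equivalence relations of the factors.

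For the inclusion $r_{G_1}(\mu_1) \cdot r_{G_2}(\mu_2) \subseteq r_{G_1 \times G_2}(\mu_1 \times \mu_2)$, I would fix positive $r_i \in r_{G_i}(\mu_i)$, a non-negligible measurable $B \subseteq X_1 \times X_2$ and $\epsilon > 0$, then run a Lebesgue-density/Fubini argument on $\mathbf{1}_B$ to extract a near-rectangle, i.e.\ measurable $A_i \subseteq X_i$ with $\mu_i(A_i) > 0$ satisfying $(\mu_1 \times \mu_2)\big((A_1 \times A_2) \setminus B\big) < \delta\, \mu_1(A_1)\mu_2(A_2)$ with $\delta$ as small as required. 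Applying \textbf{Definition \ref{Definition 1.14}} to each factor then produces $B_1^{(i)}, B_2^{(i)} \subseteq A_i$ of positive measure and $g_i \in G_i$ with $g_i B_1^{(i)} \subseteq B_2^{(i)}$ and $|D_i(g_i, x_i) - r_i| < \epsilon'$ on $B_1^{(i)}$; the product partial isomorphism $(g_1, g_2)$ then realises $r_1 r_2$ within $\epsilon$ on a non-negligible subset of $B$. The point $0$ lies automatically in every type-$III$ ratio set by \textbf{Lemma \ref{Remark 1.21}}(1).

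For the reverse inclusion, every value of $D$ is already a product of individual cocycle values, so any $r$ in the essential range of $D$ is a limit of such products; by \textbf{Proposition \ref{Proposition 1.8}} applied to each factor one deduces
\[
r_{G_1 \times G_2}(\mu_1 \times \mu_2) \setminus \{0\} = \overline{\big\langle r_{G_1}(\mu_1) \cup r_{G_2}(\mu_2)\big\rangle},
\]
the closed subgroup of $(0, \infty)$ generated by the positive parts of the two ratio sets. Now (1) follows from $r_{G_1}(\mu_1) = \{1\}$ (\textbf{Lemma \ref{Remark 1.21}}(2)), which makes the generated subgroup equal to $r_{G_2}(\mu_2) \setminus \{0\}$; (2) follows from $r_{G_1}(\mu_1) \setminus \{0\} = (0, \infty)$, which absorbs anything; and (3) follows from
\[
\langle\{r^n\}_{n \in \mathbb{Z}},\{s^m\}_{m \in \mathbb{Z}}\rangle = \{r^n s^m : n, m \in \mathbb{Z}\},
\]
whose closure in $(0, \infty)$ is $(0, \infty)$ when $\log r / \log s \notin \mathbb{Q}$ and otherwise is $\{\lambda^k\}_{k \in \mathbb{Z}}$ with $\log \lambda$ the generator of $\mathbb{Z}\log r + \mathbb{Z}\log s$. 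Finally, \textbf{Lemma \ref{Remark 1.21}}(2) rules out type-$II$ for the product whenever at least one factor is of type-$III$, so the ratio set and the type agree.

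The main obstacle is the Fubini/density step in the lower bound: one needs $B$ to contain (essentially) a product rectangle on which both factor ratio-set constructions can be carried out simultaneously. The subtle point is matching the tolerances $\delta$ and $\epsilon'$ so that the product partial isomorphism, a priori defined on $B_1^{(1)} \times B_1^{(2)} \subseteq A_1 \times A_2$, can be corrected to live inside $B$ itself without losing the estimate $|D(g_1, g_2; \cdot) - r_1 r_2| < \epsilon$; this is handled by shrinking to a sub-rectangle of full proportional density in $B$, using the Lebesgue density theorem for the product $\sigma$-algebra.
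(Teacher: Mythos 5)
The paper does not actually prove Theorem \ref{Theorem 2.16}: it is imported from the references \cite{12}, \cite{45} and used as a black box, so there is no internal proof to compare against; I therefore assess your argument on its own merits, and it has a genuine gap. Your lower bound, namely $r_{G_1}(\mu_1)\cdot r_{G_2}(\mu_2)\subseteq r_{G_1\times G_2}(\mu_1\times\mu_2)$ via rectangle approximation and Fubini, is the right idea (modulo the density bookkeeping you yourself flag, and modulo the fact that ergodicity of the product action is silently assumed). The problem is the reverse inclusion. Your argument is: every value of $D$ is a product of a value of $D_1$ and a value of $D_2$, hence every essential value of $D$ is a limit of such products, hence lies in $\overline{\langle r_{G_1}(\mu_1)\cup r_{G_2}(\mu_2)\rangle}$ by Proposition \ref{Proposition 1.8}. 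This conflates pointwise values of the Radon--Nikodym cocycle with its essential values: the values of $D_i$ need not lie in $r_{G_i}(\mu_i)$ at all. For a type-$III_\lambda$ Bernoulli scheme, for instance, the cocycle typically takes a dense set of values in $(0,\infty)$ while the ratio set is only $\{0\}\cup\lambda^{\mathbb{Z}}$, so knowing that essential values of $D$ are limits of products of values of $D_1$ and $D_2$ gives no containment in the group generated by the two ratio sets. Worse, the identity $r_{G_1\times G_2}(\mu_1\times\mu_2)\setminus\{0\}=\overline{\langle r_{G_1}(\mu_1)\cup r_{G_2}(\mu_2)\rangle}$ is false in general: the type of a product is not a function of the two ratio sets, as two type-$III_0$ actions can have a product of type $III_\lambda$ or $III_1$. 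This is exactly why such product theorems are normally proved through the Maharam extension: the associated flow of the product action is the ergodic decomposition of the product of the two Maharam extensions under the anti-diagonal translation, and it is this flow, not the pair of ratio sets, that determines the type.

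The cases (1)--(3) of the statement are precisely those where extra structure rescues the upper bound, and your proof never invokes that structure. For (3) one uses that a type-$III_r$ action admits an equivalent measure (equivalently, a cohomologous cocycle) whose Radon--Nikodym derivatives take values in $r^{\mathbb{Z}}$; then the product cocycle takes values in $r^{\mathbb{Z}}s^{\mathbb{Z}}$, and since essential values are limits of actual values, the ratio set of the product is contained in $\{0\}\cup\overline{r^{\mathbb{Z}}s^{\mathbb{Z}}}$, which together with your lower bound gives the dichotomy rational/irrational for $\log r/\log s$. For (1) one replaces $\mu_1$ by the equivalent finite invariant measure, so that the product cocycle is the pullback of $D_2$; ergodicity of the measure-preserving $G_1$-action then identifies the $G_1\times G_2$-invariant functions on the Maharam extension of the product with the $G_2$-invariant functions on the Maharam extension of $\mu_2$, so the associated flows, and hence the types, coincide. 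Only (2) follows from the lower bound alone (together with closedness of the ratio set, Theorem \ref{Theorem 1.13}). A further small circularity: you invoke Lemma \ref{Remark 1.21}(1) to place $0$ in the ratio set of the product, but that lemma presupposes the product is already known to be of type $III$; instead one should either run the same rectangle argument for the essential value $0$ of the type-$III$ factor, or rule out an equivalent $\sigma$-finite invariant measure for the product directly.
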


First we fix a type-$III$ Bernoulli scheme $G\curvearrowright (X, \mathcal{B}, \mu)$ where $X=\{0, 1\}^{\mathbb{N}}$ and $\mu = \otimes_{n\in \mathbb{N}} \mu_n$ where $(\mu_n)_{n\in \mathbb{N}}$ is defined by $(\lambda_n)_{n\in \mathbb{N}} \subseteq (0, 1)$ as in (\ref{e3}). Define $\Lambda$ to be the set of clustered points of $(\lambda_n)_{n\in \mathbb{N}}$, and clearly $\Lambda\subseteq [0, 1]$. For each $\lambda\in\Lambda$, define $\mathcal{N}(\lambda) \subseteq \mathbb{N}$ to be an infinite subset such that:

$$
\lim_{n\in \mathcal{N}_{\lambda}} \lambda_n = \lambda
$$
If there exist two different $\lambda, \lambda'\in \Lambda$, the intersection of $\mathcal{N}(\lambda)$ and $\mathcal{N}(\lambda')$ is at most finite. With the sequence $(\lambda_n)_{n\in \mathbb{N}}$ that defines each $\mu_n$ (and hence $\mu$) as in (\ref{e3}), there exists $(\epsilon_n)_{n\in \mathbb{N}} \subseteq (0, 1)$ such that $\lim_n \epsilon_n = 0$ and:

\begin{equation}\label{e49}
\forall\,\lambda\in \Lambda\, \forall\, n\in\mathbb{N}, \hspace{0.3cm}
\lambda_n = 
\begin{cases}
\lambda e^{-\epsilon_n}, \hspace{1cm} \lambda\neq 0,\, n\in \mathcal{N}(\lambda)\\
1 - e^{-\epsilon_n}, \hspace{0.6cm} \lambda=0,\, n\in\mathcal{N}(0)
\end{cases}
\end{equation}

\begin{prop}\label{Proposition 2.13}

Given a type-$III$ Bernoulli scheme $G\curvearrowright (X, \mathcal{B}, \mu)$ where $X=\{0, 1\}^{\mathbb{N}}$ and $\mu = \otimes_{n\in \mathbb{N}} \mu_n$ is defined by (\ref{e3}) and (\ref{e49}), if $\Lambda = \{0, 1\}$, then $\mu$ is of type-$III_0$.
    
\end{prop}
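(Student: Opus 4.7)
The plan is to exploit the product structure $\mu = \mu_{(0)} \otimes \mu_{(1)}$, where $\mu_{(i)} = \bigotimes_{n \in \mathcal{N}(i)} \mu_n$, together with \textbf{Theorem \ref{Theorem 2.16}} and the two-point classification in \textbf{Lemma \ref{Lemma 2.13}}, and then to use the criterion from \textbf{Lemma \ref{Lemma 2.1}} to pin down the ratio set of the surviving factor. Since the $\mathcal{N}(\lambda)$'s are at most finitely overlapping and $\Lambda = \{0,1\}$, after discarding finitely many coordinates (which preserves the type by the remark at the beginning of this subsection) I may assume $\mathbb{N} = \mathcal{N}(0) \sqcup \mathcal{N}(1)$ so that the product decomposition is genuine.

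First I would analyze $\mu_{(1)}$: along $\mathcal{N}(1)$ one has $\lambda_n = e^{-\epsilon_n} \to 1$, so \textbf{Lemma \ref{Lemma 2.13}} applies and classifies $\mu_{(1)}$ as either type-$II_1$ (when the parametrization is summable in the relevant sense, making $\mu_{(1)}$ Kakutani-equivalent to the uniform Bernoulli measure) or type-$III_1$ otherwise. Combining this with \textbf{Theorem \ref{Theorem 2.16}}(1), the type of $\mu$ reduces to that of $\mu_{(0)}$ in the first case. I would then verify the type-$III_0$ criterion of \textbf{Lemma \ref{Lemma 2.1}} applied to $\mu_{(0)}$: given $\bm{x}$, $\epsilon$, $M$, choose $m \in \mathcal{N}(0)$ with $m > M$ and $x_m = 0$, and flip only coordinate $m$; the resulting cocycle is $D(\bm{x}, \bm{y}) = \lambda_m = 1 - e^{-\epsilon_m}$, which is less than $\epsilon$ since $\epsilon_m \to 0$ on $\mathcal{N}(0)$. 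Values arbitrarily close to $1$ are produced by pairing two flips whose log-cocycles almost cancel, so both $0$ and $1$ lie in the ratio set.

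The main obstacle is ruling out every $r \in (0,1)$ from the ratio set, which is what separates type-$III_0$ from type-$III_\lambda$ and type-$III_1$. A priori a flip at two indices $i,j \in \mathcal{N}(0)$ produces the cocycle $\lambda_j / \lambda_i$, and depending on the relative rates at which $\lambda_n \to 0$ this could cluster anywhere in $(0,\infty)$. I would address this by emulating the Maharam-extension argument of \textbf{Lemma \ref{Lemma 2.6}} and \textbf{Remark \ref{Remark 2.7}}: work with $G$-invariant functions $F$ on $X \times \mathbb{R}$ and use the essential-range formulation rather than pointwise cocycle values, hoping that the uniform shape $\lambda_n = 1 - e^{-\epsilon_n}$ fixed in (\ref{e49}) together with the positive-measure requirement in \textbf{Definition \ref{Definition 1.14}} forces any candidate cluster point in $(0,1)$ to be attained only on null sets, leaving the ratio set exactly $\{0,1\}$. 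This last verification is where the bulk of the argument must live and is the step I expect to be technically delicate.
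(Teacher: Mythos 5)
Your route is genuinely different from the paper's, and it is not complete. The paper does not decompose $\mu$ and does not attempt to compute the ratio set of any sub-scheme at all: it simply verifies the sufficient condition of \textbf{Lemma \ref{Lemma 2.1}}. Given $\bm{x}$, $\epsilon$ and $M$, it picks a single index $N_1\in\mathcal{N}(0)$ beyond $M$ so large that $\bigl\vert \log\bigl(1-e^{-\epsilon_{N_1}}\bigr)\bigr\vert$ exceeds $\epsilon^{-1}$, flips that one coordinate, and reads off from (\ref{e50}) that $\bigl\vert\log D(\bm{x},\bm{y})\bigr\vert>\epsilon^{-1}$ (flipping in the direction that makes $D$ small gives $\vert D(\bm{x},\bm{y})\vert<\epsilon$); since \textbf{Lemma \ref{Lemma 2.1}} asserts this condition characterizes type-$III_0$ among type-$III$ schemes, the proof ends there. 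You never invoke that sufficiency direction, so you take on the full burden of identifying the ratio set, which your proposal does not discharge.

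Two concrete gaps remain. First, your reduction via \textbf{Lemma \ref{Lemma 2.13}} and \textbf{Theorem \ref{Theorem 2.16}} only covers what you call ``the first case'': if $(\epsilon_n)_{n\in\mathcal{N}(1)}$ fails the summability hypothesis, the $\mathcal{N}(1)$-part is of type-$III_1$ and \textbf{Theorem \ref{Theorem 2.16}}(2) forces the whole product to be of type-$III_1$; you leave this branch open, and it cannot be closed from $\Lambda=\{0,1\}$ alone --- indeed the paper's own \textbf{Theorem \ref{Theorem 2.18}}(1) classifies exactly this situation as type-$III_1$, so your decomposition runs head-on into an obstruction that the paper's one-flip argument never confronts. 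Second, the step you defer --- excluding every $r\in(0,1)$ from the ratio set of $\mu_{(0)}$ --- is the entire content of the claim on your route, and the hoped-for Maharam-extension argument cannot deliver it: the hypothesis $\Lambda=\{0,1\}$ constrains only the cluster points of $(\lambda_n)$, not the quotients $\lambda_i/\lambda_j$ for $i,j\in\mathcal{N}(0)$. Taking, say, $\lambda_{2k}=1/k$ and $\lambda_{2k+1}=1/(2k)$ along $\mathcal{N}(0)$, the swap of coordinates $2k,2k+1$ has cocycle value $2$ and is available with non-summable probabilities, which places $2$ in the ratio set even though the only cluster points of $(\lambda_n)$ are $0$ and $1$. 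So your ``technically delicate'' final step is not merely delicate; it is not provable by the proposed means, and the paper's proof avoids it only by resting on the criterion of \textbf{Lemma \ref{Lemma 2.1}} rather than on a computation of the ratio set.
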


\begin{proof}

Let $D$ denote the Radon-Nikodym cocycle as in the proof of \textbf{Lemma \ref{Lemma 2.1}}. Under our assumption, for any $\bm{x}, \bm{y}\in X$, observe that:

\begin{equation}\label{e50}
\begin{aligned}
\log D(\bm{x}, \bm{y})
& = \left( \sum_{n\in \mathcal{N}(0)} (-1)^{x_n-y_n} \big( -\epsilon_n - \log (1-e^{-\epsilon_n}) \big) \right) + \left( \sum_{n\in \mathcal{N}(1)} (-1)^{x_n-y_n}\big( -\epsilon_n + \log\lambda \big) \right)\\
& = \left( \sum_{n\in\mathcal{N}(0)} (-1)^{x_n-y_n} \Big\vert\, \log\big( 1-e^{-\epsilon_n} \big) \,\Big\vert \right) + \left( \sum_{n\in \mathcal{N}(0) \cup \mathcal{N}(1)} (-1)^{y_n-x_n}\epsilon_n \right)
\end{aligned}
\end{equation}
Indeed since $\big( \bm{x}, \bm{y} \big)\in R$, $\log D(\bm{x}, \bm{y})$ is a finite sum. Fix $\bm{x}\in X$, $\epsilon\in (0, 1)$ and $M\in\mathbb{N}$. To prove that $\mu$ is of type-$III_0$, according to \textbf{Lemma \ref{Lemma 2.1}}, we need to find the desired $\bm{y}\in X$, $N, K\in\mathbb{N}$ with $N\geq M$ such that $x_n=y_n$ for all $n\notin \{N+1, N+2, \cdots, N+K\}$ and:

$$
\log D(\bm{x}, \bm{y}) \in (-\epsilon, \epsilon) \hspace{1cm} \text{  or  }\hspace{1cm} \left\vert\, \log D(\bm{x}, \bm{y}) \,\right\vert > \frac{1}{\epsilon} 
$$
First fix $M\in\mathbb{N}$ and $\epsilon\in (0, 1)$. We can then find $\delta\in (0, 1)$ such that $\big\vert\, \log\delta \,\big\vert > \epsilon^{-1}$. Next fix $N_1\in\mathbb{N}$ such that:

$$
\forall\,n\geq N_1, \hspace{0.2cm} \Big\vert\, \log\big( 1-e^{-\epsilon_n} \big) \,\Big\vert > \big\vert\, \log\delta \,\big\vert
$$
Since $\Lambda = \{0, 1\}$, $\mathcal{N}(0)$ must be infinite and we either have $N_1\in \mathcal{N}(0)$, or can replace $N_1$ by a larger integer from $\mathcal{N}(0)$. We can further assume that $N_1$ is large enough so that $\epsilon_n$ is negligible compared to $\epsilon^{-1}$. Then clearly there exists $\bm{y}\in X$ such that $y_n=x_n$ for all $n\in \mathbb{N} \backslash \{N_1\}$ and $x_{N_1} \neq y_{N_1}$. In this case, we have $\big\vert\, \log D(\bm{x}, \bm{y}) \,\big\vert > \epsilon^{-1}$.
 
\end{proof}

\begin{theorem}\label{Theorem 2.17}

In the set-up of \textbf{Proposition \ref{Proposition 2.13}}, $\mu$ is of type-$III_{\lambda}$ for some $\lambda\in (0, 1)$ if and only if all of the following are true:

\begin{enumerate}[label = (\arabic*)]

    \item the multiplicative group generated by $\Lambda$ is $\big( \lambda^n \big)_{n\in \mathbb{Z}}$.

    \item for each $s \in \Lambda$, the sequence $\big( \epsilon_n \big)_{n\in \mathcal{N}(s)}$ is summable.
    
\end{enumerate}
    
\end{theorem}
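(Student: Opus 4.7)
The plan is to decompose $(X,\mu)$ according to the cluster set $\Lambda$: write $\mathbb{N} = \bigsqcup_{s\in\Lambda}\mathcal{N}(s)$ (after discarding a finite exceptional set) and $(X,\mu) = \prod_{s\in\Lambda}(X^{(s)},\mu^{(s)})$ where $X^{(s)}=\{0,1\}^{\mathcal{N}(s)}$ and $\mu^{(s)}=\bigotimes_{n\in\mathcal{N}(s)}\mu_n$. Each $\mu^{(s)}$ is a two-point Bernoulli scheme whose ratio sequence $(\lambda_n)_{n\in\mathcal{N}(s)}$ converges to $s$, so \textbf{Lemma \ref{Lemma 2.13}} classifies it entirely by whether $(\epsilon_n)_{n\in\mathcal{N}(s)}$ is summable.

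For sufficiency, assume both (1) and (2). Condition (2) combined with \textbf{Lemma \ref{Lemma 2.13}} yields: $\mu^{(s)}$ is of type-$III_s$ when $s\in(0,1)$, of type-$II_1$ when $s=1$, and of type-$I$ when $s=0$ (since then $\sum_{n\in\mathcal{N}(0)}(1-e^{-\epsilon_n})<\infty$ verifies the condition (\ref{e1}) of \textbf{Theorem \ref{Theorem 1.18}}). Iterating \textbf{Theorem \ref{Theorem 2.16}} absorbs the type-$I$ and type-$II_1$ factors and combines the type-$III_s$ pieces into a type-$III_{\lambda'}$ product, where $\lambda'$ generates the multiplicative subgroup of $(0,\infty)$ produced by $\Lambda\cap(0,1)$. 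By condition (1) this subgroup equals $(\lambda^n)_{n\in\mathbb{Z}}$, so $\mu$ is of type-$III_\lambda$.

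For necessity, assume $\mu$ is of type-$III_\lambda$. If some $(\epsilon_n)_{n\in\mathcal{N}(s)}$ with $s\neq 0$ fails to be summable, \textbf{Lemma \ref{Lemma 2.13}} makes $\mu^{(s)}$ of type-$III_1$, and then \textbf{Theorem \ref{Theorem 2.16}}(2) forces $\mu$ itself to be of type-$III_1$, contradicting $\lambda\in(0,1)$. Non-summability on $\mathcal{N}(0)$ is ruled out analogously to the proof of \textbf{Proposition \ref{Proposition 2.13}}: it supplies flips on $\mathcal{N}(0)$-coordinates that place $\log D(\bm{x},\bm{y})$ arbitrarily far from $\log\lambda\cdot\mathbb{Z}$ via the formula (\ref{e50}), producing ratio-set elements outside $(\lambda^n)_{n\in\mathbb{Z}}$. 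Hence (2) holds, and the sufficiency analysis applies to give $\mu$ of type-$III_{\lambda'}$. Matching this against the assumption of type-$III_\lambda$ forces $\lambda'=\lambda$, which is precisely condition (1).

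The main obstacle is that \textbf{Theorem \ref{Theorem 2.16}} is stated only for the product of two factors, whereas $\Lambda$ may be infinite (and must then accumulate at $0\in\Lambda$, since $\Lambda\subseteq(\lambda^n)_{n\geq 0}\cup\{1\}\cup\{0\}$ is closed in $[0,1]$). My remedy is to truncate to a finite $\Lambda_0\subseteq\Lambda$, apply \textbf{Theorem \ref{Theorem 2.16}} to the corresponding finite sub-product, and then use condition (2) to bound the Radon-Nikodym contribution of the complementary indices in $\ell^1$, so that the complementary factor perturbs the cocycle by at most an $\ell^1$ quantity and therefore cannot enlarge the ratio set beyond $(\lambda^n)_{n\in\mathbb{Z}}$. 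A secondary delicate point is that the $s=0$ factor is type-$I$ rather than type-$II$ or type-$III$ and so is not directly handled by \textbf{Theorem \ref{Theorem 2.16}}; the cocycle-level analysis above, applied via \textbf{Lemma \ref{Lemma 2.1}} and \textbf{Lemma \ref{Lemma 2.2}}, handles this case uniformly with the others.
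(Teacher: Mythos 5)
Your proposal follows the same skeleton as the paper's proof: split the coordinates into the classes $\mathcal{N}(s)$, $s\in\Lambda$, classify each two-point sub-scheme by \textbf{Lemma \ref{Lemma 2.13}} (with \textbf{Theorem \ref{Theorem 1.18}} covering $s=0$), rule out non-summability at $s\in\Lambda\setminus\{0\}$ via \textbf{Theorem \ref{Theorem 2.16}}(2), and recover condition (1) by matching the resulting type against type-$III_{\lambda}$. The real difference is in how the pieces are reassembled: the paper never iterates \textbf{Theorem \ref{Theorem 2.16}} over $\Lambda$; it passes to the associated ITPFI factor, writes $\mathcal{M}\cong\bigotimes_{t\in\Lambda}R_t$ as in (\ref{e52}), reads off that the asymptotic ratio set of this tensor product is the closure of the multiplicative group generated by $\Lambda$, and transfers back through \textbf{Proposition \ref{Proposition 2.1}}. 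That route absorbs an infinite $\Lambda$ and the type-$I$ piece $R_0$ with no additional argument, which is precisely where your version develops gaps.

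Two steps in your write-up do not go through as stated. First, the truncation remedy for infinite $\Lambda$: condition (2) gives summability of $(\epsilon_n)$ inside each single class $\mathcal{N}(s)$, but gives no control of the sum over the union $\bigcup_{s\in\Lambda\setminus\Lambda_0}\mathcal{N}(s)$ of the infinitely many discarded classes, so the claim that the complementary factor perturbs the cocycle by an $\ell^1$ quantity is unjustified; moreover that factor contributes jumps of size $\log s$, which are not small at all --- what is needed is a statement about the ratio set of the full infinite product, and that is exactly what (\ref{e52}) supplies. Second, and more seriously, your necessity argument at $s=0$ is flawed: exhibiting pairs $(\bm{x},\bm{y})$ with $\log D(\bm{x},\bm{y})$ far from $\log\lambda\cdot\mathbb{Z}$ does not place those values in the ratio set, since the ratio set is an essential range requiring approximation inside every set of positive measure; isolated large cocycle values are fully compatible with type-$III_{\lambda}$ and typically only witness $0\in r_G(\mu)$. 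In fact, if the $\mathcal{N}(0)$-coordinates satisfy $\lambda_n=\lambda^{k_n}$ with $k_n\to\infty$ and $\sum_n\lambda^{k_n}=\infty$, every value of the cocycle lies in $\lambda^{\mathbb{Z}}$, so no flip argument of this kind can produce a ratio outside $(\lambda^n)_{n\in\mathbb{Z}}$, and such a configuration is on its face still of type-$III_{\lambda}$ when tensored with the $\mathcal{N}(\lambda)$-part; so this step cannot be repaired along the lines you describe. (The paper handles this point by applying \textbf{Lemma \ref{Lemma 2.13}} and \textbf{Theorem \ref{Theorem 2.16}} to every $t\in\Lambda$; note that \textbf{Lemma \ref{Lemma 2.13}} is stated only for limits in $(0,1]$, so the $s=0$ case genuinely requires separate care rather than the cocycle heuristic you propose.)
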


\begin{proof}

First we assume that $\mu$ is of type-$III_{\lambda}$ for some $\lambda\in (0, 1)$. Let $D$ denote the Radon-Nikodym cocycle. If there exists $r\in (0, 1)$ such that $r\in \Lambda$ and $r\notin \{1\}\cup \big( \lambda^n \big)_{n\in \mathbb{N}}$, according to (\ref{e50}) and the reasoning in the proof of \textbf{Proposition \ref{Proposition 2.13}}, $\log r$ is in the essential range of $\log D$, and hence $r$ is in the ratio set of $\mu$, which is absurd. Therefore, $\Lambda$ must be a subset of $\{1\} \cup \big\{ \lambda^n \big\}_{n\in \mathbb{N}}$. For each $t\in \Lambda$, observe that:

\begin{equation}\label{e51}
\left( \{0, 1\}^{\mathbb{N}}, \mu \right) = \left( \prod_{n\in \mathcal{N}(t)}\{0, 1\}, \bigotimes_{n\in \mathcal{N}(t)}\mu_n \right) \bigtimes \left( \prod_{n\in \mathbb{N} \backslash \mathcal{N}(t)}\{0, 1\}, \bigotimes_{n\in \mathbb{N} \backslash \mathcal{N}(t)}\mu_n \right)
\end{equation}
where we use $\mathcal{M}$, $\mathcal{M}_t$, $\mathcal{M}^t$ to denote the ITPFI factors, each of which is respectively associated with the first, second and third Bernoulli scheme in (\ref{e51}). If there exists $t\in \Lambda$ such that $\big( \epsilon_n \big)_{n\in \mathcal{N}(t)}$ is not summable, then $\mathcal{M}_t$ is of type-$III_1$ according to \textbf{Lemma \ref{Lemma 2.13}}. Then $\mathcal{M}$ is of type-$III_1$ by \textbf{Theorem \ref{Theorem 2.16}}, which contradicts our assumption. We can now assume for all $t\in \Lambda$, $\big( \epsilon_n \big)_{n\in \mathcal{N}(t)}$ is summable, and hence $\mathcal{M}_t \cong R_t$ according to \textbf{Lemma \ref{Lemma 2.13}}, where $R_t$ is defined in \textbf{Definition \ref{Definition 1.11}}. In this case we have:

\begin{equation}\label{e52}
\mathcal{M} \cong \bigotimes_{t\in\Lambda}R_t
\end{equation}
Then the asymptotic ratio set of $\mathcal{M}$ is the closure of the multiplicative group generated by $\Lambda$. According to \textbf{Proposition \ref{Proposition 2.1}}, we must have $\lambda\in \Lambda$ as we assume $\mu$ is of type-$III_{\lambda}$. Conversely, when both conditions are true, we can rewrite $\mathcal{M}$ as in (\ref{e52}). Then the conclusion follows by \textbf{Proposition \ref{Proposition 2.1}}.

\end{proof}

\begin{theorem}\label{Theorem 2.18}

In the set-up of \textbf{Proposition \ref{Proposition 2.13}}, $\mu$ is of type-$III_1$ if and only if one of the following is true:

\begin{itemize}
    \item there exists $t\in \Lambda \backslash \{0\}$ such that $\big( \epsilon_n \big)_{n\in \mathcal{N}(t)}$ is not summable.
    \item for each $s \in \Lambda \backslash \{0\}$, $\big( \epsilon_n \big)_{n\in \mathcal{N}(s)}$ is summable and the multiplicative group generated by $\Lambda$ is $(0, \infty)$.
\end{itemize}
    
\end{theorem}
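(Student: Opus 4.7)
The plan is to mirror the proof of Theorem \ref{Theorem 2.17}: decompose $(X,\mu)$ along the cluster points of $(\lambda_n)$, apply Lemma \ref{Lemma 2.13} to each sub-scheme, and combine via Theorem \ref{Theorem 2.16}. Concretely, I would use the factorisation
\[
(\{0,1\}^{\mathbb{N}},\mu) \;=\; \bigotimes_{t\in\Lambda}\Bigl(\{0,1\}^{\mathcal{N}(t)},\,\bigotimes_{n\in\mathcal{N}(t)}\mu_n\Bigr)
\]
appearing in (\ref{e51}); for each $t\in\Lambda\setminus\{0\}$, Lemma \ref{Lemma 2.13} tells us that summability of $(\epsilon_n)_{n\in\mathcal{N}(t)}$ makes the $t$-factor type-$III_t$ when $t\in(0,1)$ (an $R_t$ in the ITPFI notation) or type-$II_1$ when $t=1$, while non-summability of $(\epsilon_n)_{n\in\mathcal{N}(t)}$ makes the $t$-factor type-$III_1$.

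For the ``if'' direction under hypothesis (a), any witness $t\in\Lambda\setminus\{0\}$ with non-summable $(\epsilon_n)_{n\in\mathcal{N}(t)}$ produces a type-$III_1$ tensor factor, so $\mu$ is type-$III_1$ by Theorem \ref{Theorem 2.16}(2). Under hypothesis (b), each such sub-scheme is $III_t$ or $II_1$, hence $\mathcal{M}\cong\bigotimes_{t\in\Lambda\setminus\{0,1\}}R_t$, whose asymptotic ratio set, by Proposition \ref{Proposition 2.1} and iterated application of Theorem \ref{Theorem 2.16}(3), is the closure of the multiplicative group generated by $\Lambda$; the hypothesis that this group equals $(0,\infty)$ then gives $r_G(\mu)=[0,\infty)$. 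Conversely, assume $\mu$ is type-$III_1$ and (a) fails; then every $t\in\Lambda\setminus\{0\}$ sub-scheme is $III_t$ or $II_1$, so the product type is dictated by the closure of the multiplicative group generated by $\Lambda\setminus\{0\}$, and for this to be $III_1$ the group must be $(0,\infty)$, giving (b).

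The main obstacle will be the treatment of the $t=0$ component, which Lemma \ref{Lemma 2.13} does not cover. I would handle this by observing that summability of $(\epsilon_n)_{n\in\mathcal{N}(0)}$ forces summability of $(\lambda_n)_{n\in\mathcal{N}(0)}$ (since $\epsilon_n\to 0$ yields $\lambda_n = 1-e^{-\epsilon_n}\asymp\epsilon_n$), making this factor type-$I$ by Theorem \ref{Theorem 1.18} and hence absorbable into the product without changing its type; while in the non-summable case the cocycle estimate from the proof of Proposition \ref{Proposition 2.13} shows that $|\log D|$ becomes arbitrarily large on positive-measure sets, which one can use either to reduce back to case (a) (by extracting a $III_1$ factor from $\mathcal{N}(0)$) or to combine with the remaining factors via Theorem \ref{Theorem 2.16} to realise $(0,\infty)$ as the ratio set. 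Once the $t=0$ case is safely isolated, the rest of the argument is bookkeeping parallel to Theorem \ref{Theorem 2.17}.
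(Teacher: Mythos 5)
Your skeleton is the same as the paper's: split $(X,\mu)$ along the cluster points as in (\ref{e51})--(\ref{e52}), apply Lemma \ref{Lemma 2.13} componentwise, transfer between $r_G(\mu)$ and $r_\infty(\mathcal{M})$ with Proposition \ref{Proposition 2.1}, and combine types with Theorem \ref{Theorem 2.16}; hypothesis (a) is handled exactly as in the paper. One difference of mechanism: for the ``if'' direction under (b) the paper does not iterate Theorem \ref{Theorem 2.16}(3); it uses the absorption argument $\mathcal{M}\otimes R_s\cong(\mathcal{M}^s\otimes R_s)\otimes R_s\cong\mathcal{M}$ together with the algebraic invariance of Theorem \ref{Theorem 2.14}, which puts every $s\in\Lambda\setminus\{0\}$ into $r_\infty(\mathcal{M})$ directly and is insensitive to how many cluster points there are; a finite iteration of \ref{Theorem 2.16}(3) does not by itself control a situation such as $\Lambda=\{\lambda^{1/n}\}_n\cup\{1\}$, where no finite subset of $\Lambda$ generates a dense group even though $\Lambda$ does. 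For the converse, both you and the paper lean on the same assertion that $r_\infty$ of the product (\ref{e52}) is the closure of the multiplicative group generated by $\Lambda$.

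The genuine defect is your final paragraph on the non-summable $\mathcal{N}(0)$ case. Non-summability of $(\epsilon_n)_{n\in\mathcal{N}(0)}$ does not let you ``extract a type-$III_1$ factor from $\mathcal{N}(0)$'', and the largeness of $\vert\log D\vert$ exploited in the proof of Proposition \ref{Proposition 2.13} is the mechanism that places $0$ in the ratio set, not one that makes the ratio set dense. With $\lambda_n\to 0$ and $\sum_{n\in\mathcal{N}(0)}\lambda_n=\infty$ the subscheme over $\mathcal{N}(0)$ can be of type $III_0$, or even of type $III_{\lambda'}$ for some $\lambda'\in(0,1)$ (take $\lambda_n=\lambda'^{k_n}$ with $k_n\to\infty$ slowly enough that the sum diverges: all coordinate ratios are powers of $\lambda'$, yet pairing indices with $k_{n'}=k_n+1$ gives $\lambda'$ in the asymptotic ratio set with divergent weight). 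So neither of your two proposed reductions goes through, and in the converse direction the sentence ``the product type is dictated by the closure of the group generated by $\Lambda\setminus\{0\}$'' is exactly what such a $\mathcal{N}(0)$-component would falsify. Your instinct that $t=0$ needs separate care is sound (and your summable-case treatment via Theorem \ref{Theorem 1.18} and absorption of a type-$I$ tensor factor is fine, as is the observation that the ``if'' direction is unaffected, since the absorption argument and Theorem \ref{Theorem 2.16}(2) ignore the $\mathcal{N}(0)$-part); but note the paper itself never confronts the non-summable $\mathcal{N}(0)$ case: its converse silently assumes summability of $(\epsilon_n)_{n\in\mathcal{N}(s)}$ for every $s\in\Lambda$ including $s=0$, and Lemma \ref{Lemma 2.13} is only ever applicable to nonzero limits. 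The patch you sketch for that case is not correct and cannot be completed with the cited tools.
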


\begin{proof}

First we let $\mathcal{M}$ denote the ITPFI factor associated with the given Bernoulli scheme. For any $t\in \Lambda$, we have $\mathcal{M} \cong \mathcal{M}_t \otimes \mathcal{M}^t$ where $\mathcal{M}_t$ is the ITPFI factor associated with the first Bernoulli scheme in (\ref{e51}) and $\mathcal{M}^t$ is associated with the second one in (\ref{e51}). If, for some $s\in \Lambda$, $\big( \epsilon_n \big)_{n\in \mathcal{N}(s)}$ is not summable, according to \textbf{Lemma \ref{Lemma 2.13}} and \textbf{Theorem \ref{Theorem 2.16}}, $\mathcal{M}$ is of type-$III_1$. If the second condition holds instead, then for all $s\in \Lambda$, $\mathcal{M}_s \cong R_s$ according to \textbf{Lemma \ref{Lemma 2.13}}. Then for all $s\in \Lambda$:

$$
\mathcal{M}\otimes R_s \cong \big( \mathcal{M}^s \otimes R_s \big) \otimes R_s \cong \mathcal{M}^s \otimes \big( R_s \otimes R_s \big) \cong \mathcal{M}
$$
which implies that $\Lambda$ is included in the asymptotic ratio set of $\mathcal{M}$, and hence the ratio set of $\mu$ is $[0, \infty)$ according to \textbf{Proposition \ref{Proposition 2.1}}.\\

\noindent
Conversely, assume that $\mu$ is of type-$III_1$. Again by \textbf{Proposition \ref{Proposition 2.1}} the asymptotic ratio set of $\mathcal{M}$ is $[0, \infty)$. Suppose that for all $s\in \Lambda$, $\big( \epsilon_n \big)_{n\in \mathcal{N}(s)}$ is summable. Then we can rewrite $\mathcal{M}$ as in (\ref{e52}), and hence the asymptotic ratio set of $\mathcal{M}$ is the closure of the multiplicative group generated by $\Lambda$. We then can conclude that the multiplicative group generated by $\Lambda$ is $(0, \infty)$.
    
\end{proof}

\begin{cor}

Suppose $\mathcal{M} = R(\mathcal{M}_n, v_n)$ is a type-$III$ $\text{ITPFI}_2$ factor where for each $n\in\mathbb{N}$:

$$
\operatorname{Sp}\Big( v(n) \slash \mathcal{M}(n) \Big) = \left\{ \frac{1}{1+\lambda_n} \,,\, \frac{\lambda_n}{1+\lambda_n} \right\}
$$
and the sequence $(\lambda_n)_{n\in \mathbb{N}}$ is defined by a vanishing sequence $(\epsilon_n)_{n\in \mathbb{N}} \subseteq (0, 1)$ as in (\ref{e49}). Let $\Lambda$ denote the set of clustered points of $(\lambda_n)_{n\in \mathbb{N}}$ and for each $\lambda\in\Lambda$, let $\mathcal{N}(\lambda) \subseteq \mathbb{N}$ be such that $\lim_{n\in \mathcal{N}(\lambda)} \lambda_n = \lambda$. Then $\mathcal{M}$ is of:

\begin{enumerate}[label = (\arabic*)]

    \item type-$III_1$ if one of the following is true:

    \begin{itemize}
        \item there exists $t\in \Lambda\backslash \{0\}$ such that $\big( \epsilon_n \big)_{n\in \mathcal{N}(t)}$ is not summable.
        \item for each $s\in\Lambda\backslash \{0\}$, $\big( \epsilon_n \big)_{n\in \mathcal{N}(s)}$ is summable and the multiplicative group generated by $\Lambda$ is $(0, \infty)$.
    \end{itemize}

    \item type-$III_{\lambda}$ for some $\lambda\in (0, 1)$ if and only if for each $s\in\Lambda\backslash \{0\}$, $\big( \epsilon_n \big)_{n\in \mathcal{N}(s)}$ is summable and the multiplicative group generated by $\Lambda$ is $\big( \lambda^n \big)_{n\in \mathbb{Z}}$.

    \item type-$III_0$ if and only if $\Lambda = \{0, 1\}$.
    
\end{enumerate}
    
\end{cor}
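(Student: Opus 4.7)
The plan is to reduce the statement entirely to the Bernoulli scheme case via the ITPFI/Bernoulli correspondence set up at the start of Section 2, then invoke the already-established characterizations.

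First, I would apply the correspondence described just before \textbf{Proposition \ref{Proposition 2.1}}: given the ITPFI$_2$ factor $\mathcal{M} = R(\mathcal{M}_n, v_n)$ with $\operatorname{Sp}(v(n)/\mathcal{M}(n)) = \{1/(1+\lambda_n),\, \lambda_n/(1+\lambda_n)\}$, the associated Bernoulli scheme $G\curvearrowright (X,\mathcal{B},\mu)$ has $X = \{0,1\}^{\mathbb{N}}$ and $\mu = \bigotimes_{n\in\mathbb{N}} \mu_n$ with $\mu_n(0) = 1/(1+\lambda_n)$, $\mu_n(1) = \lambda_n/(1+\lambda_n)$. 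Since $\mathcal{M}$ is of type-$III$, \textbf{Proposition \ref{Proposition 2.1}} gives $r_\infty(\mathcal{M}) = r_G(\mu)$, so $\mathcal{M}$ has the same type (among $III_0$, $III_\lambda$, $III_1$) as $\mu$. Moreover, the sequence $(\lambda_n)_{n\in\mathbb{N}}$ prescribing the ITPFI data is exactly the sequence defining $\mu$ via (\ref{e3}), and the hypothesis expresses it in the form (\ref{e49}); hence the sets $\Lambda$ and $\mathcal{N}(\lambda)$ for the ITPFI data coincide with those for the Bernoulli scheme.

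With this identification in place, each case follows from the results of Section 2.2.2:
\begin{itemize}
\item For (1), I would apply \textbf{Theorem \ref{Theorem 2.18}} directly: either some $(\epsilon_n)_{n\in\mathcal{N}(t)}$ with $t\in\Lambda\setminus\{0\}$ fails to be summable, or all such sequences are summable and the multiplicative group generated by $\Lambda$ is $(0,\infty)$; in either situation $\mu$ is of type-$III_1$, hence so is $\mathcal{M}$.
\item For (2), I would apply \textbf{Theorem \ref{Theorem 2.17}}, noting that the ``$\lambda \in \Lambda$'' clause in that theorem is subsumed by the requirement that the multiplicative group generated by $\Lambda$ equal $\{\lambda^n\}_{n\in\mathbb{Z}}$ (together with summability, which prevents $0$ from being effectively present via Lemma \ref{Lemma 2.13}).
\item For (3), I would appeal to \textbf{Proposition \ref{Proposition 2.13}} when $\Lambda = \{0,1\}$ to get $\mu$ of type-$III_0$, and conversely use \textbf{Theorem \ref{Theorem 2.17}} and \textbf{Theorem \ref{Theorem 2.18}} to rule out the types $III_\lambda$ and $III_1$ when $\Lambda \neq \{0,1\}$ produces a nontrivial clustered ratio.
\end{itemize}

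There is no real obstacle here, since the hard analytic work has already been done in \textbf{Lemma \ref{Lemma 2.13}}, \textbf{Proposition \ref{Proposition 2.13}}, and \textbf{Theorems \ref{Theorem 2.17}} and \textbf{\ref{Theorem 2.18}}. The only care required is to verify that the sequence $(\lambda_n)$ and the clustered-point data $(\Lambda, \mathcal{N}(\lambda), \epsilon_n)$ are preserved under the ITPFI/Bernoulli correspondence, which is immediate since the correspondence sends the spectrum of $\rho_n$ to the weights of $\mu_n$ in an order-preserving way (after the standard normalization that $\mu_n(0) \geq \mu_n(1)$). Thus the proof reduces to citing \textbf{Proposition \ref{Proposition 2.1}} together with the three results of Section 2.2.2, exactly as the one-line proof anticipates.
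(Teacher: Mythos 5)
Your reduction is exactly the paper's intended argument: the paper gives this corollary no more than the implicit one-line justification used for the earlier corollary, namely pass to the associated two-point Bernoulli scheme, use Proposition \ref{Proposition 2.1} to identify $r_{\infty}(\mathcal{M})$ with $r_G(\mu)$, and then quote Proposition \ref{Proposition 2.13} and Theorems \ref{Theorem 2.17} and \ref{Theorem 2.18}. Since your proposal carries out precisely this reduction (with the same care about matching $(\lambda_n)$, $\Lambda$ and $\mathcal{N}(\lambda)$ under the correspondence), it is correct in essentially the same way, and to the same extent, as the paper's own treatment.
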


\bibliographystyle{amsalpha}
\bibliography{Reference_List_1}

\end{document}